\theoremstyle{plain}
    \newtheorem{thm}{Theorem}[section]
    \newtheorem{lem}[thm]{Lemma}
    \newtheorem{prop}[thm]{Proposition}
    \newtheorem{cor}[thm]{Corollary}
\theoremstyle{definition}    
    \newtheorem{defn}[thm]{Definition}
\def\an{{\mathrm{an}}}
\def\aru{{\arrow[u,"\rotatebox{90}{$\in$}",phantom]}}
\def\Aut{{\mathrm{Aut}}}
\def\B{{\mathcal{B}}}
\def\C{{\mathbb{C}}}
\def\Cc{{\mathcal{C}}}
\def\CH{{\mathrm{CH}}}
\def\Dc{{\mathscr{D}}}
\def\dec{{\mathrm{dec}}}
\def\del{\partial}
\def\div{{\mathrm{div}}}
\def\E{{\mathcal{E}}}
\def\ev{{\mathrm{ev}}}
\def\J{{\mathcal{J}}}
\def\id{{\mathrm{id}}}
\def\ind{{\mathrm{ind}}}
\def\L{{\mathcal{L}}}
\def\lra{\:{\longrightarrow}\:}
\def\M{{\mathcal{M}}}
\def\NS{{\mathrm{NS}}}
\def\O{{\mathcal{O}}}
\def\P{{\mathbb{P}}}
\def\Pc{{\mathcal{P}}}
\def\Q{{\mathbb{Q}}}
\def\Qc{{\mathcal{Q}}}
\def\R{{\mathbb{R}}}
\def\rank{{\mathrm{rank}\:}}
\def\Spec{{\mathrm{Spec}\,}}
\def\tr{{\mathrm{tr}}}
\def\X{{\mathcal{X}}}
\def\Y{{\mathcal{Y}}}
\def\Z{{\mathbb{Z}}}
\def\Zc{{\mathcal{Z}}}
\begin{document}
\title{Higher Chow cycles on Eisenstein $K3$ surfaces}
\author{Ken Sato}
\address{Department of Mathematics, Institute of Science Tokyo}
\email{sato.k.da@m.titech.ac.jp}
\begin{abstract}
We construct higher Chow cycles of type $(2,1)$ on some families of $K3$ surfaces with non-symplectic automorphisms of order 3 and prove that our cycles are indecomposable for very general members.
The proof is a combination of some degeneration arguments, and explicit computations of the regulator map.
\end{abstract}
\maketitle
\section{Introduction}
Higher Chow groups $\CH^p(X,q)$ of smooth varieties $X$ introduced by Bloch are a generalization of the classical algebraic cycles.
They are related to many important invariants in algebraic geometry, $K$-theory, and number theory.
However, when the codimension $p$ is greater than 1, its structure is still mysterious.
In the papers \cite{MS23} and \cite{Sato}, we explicitly construct higher Chow cycles of type $(2,1)$ on $K3$ surfaces with non-symplectic automorphisms of order $2$ and $4$, respectively.
In these papers, it turned out that non-symplectic automorphisms on $K3$ surfaces can play an effective role for explicit constructions of $(2,1)$-cycles.

In this paper, we consider the case of order 3.
A pair $(X,\sigma)$ of a $K3$ surface $X$ and a non-symplectic automorphism $\sigma$ of order $3$ is called an \textit{Eisenstein K3 surface}. 
By the classification theory (\cite{AS08}, \cite{Tak11} and \cite{DK07}), each component of the moduli space is labelled by a pair $(r,a)$ of natural numbers, and the corresponding component $\M_{r,a}$ is a Zariski open subset of a quotient of a complex ball by a unitary group.
The invariant $r$ is even number, and $(r,a)$ satisfies the inequalities $a\le (r+2)/2$ and $a \le (22-r)/2$ except for one case.
(See Figure \ref{Eisdist}.)

Here we focus on Eisenstein $K3$ surfaces on the extremal lines $a = (r+2)/2$ for $r\le 10$ and $a = (22-r)/2$ for $r\ge 12$ in this paper.
For each such $(r,a)$, we construct an explicit $(2,1)$-cycle $\xi$ on the members $(X,\sigma)$ of $\M_{r,a}$.
Then we prove the following.
\begin{thm}\label{mainintro2}
For a very general $(X,\sigma)$, the indecomposable parts of the $(2,1)$-cycles $\xi$ and $\sigma_*(\xi)$ are non-torsion and linearly independent.
In particular, the rank of the indecomposable parts of $\CH^2(X,1)$ is equal or greater than $2$.
\end{thm}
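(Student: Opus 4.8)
The plan is to detect the indecomposable classes through the transcendental part of the real regulator, to deduce the linear independence formally from the order-$3$ symmetry, and to reduce the non-triviality to a single computation at a boundary point of $\M_{r,a}$; this follows the same overall scheme as in \cite{MS23} and \cite{Sato}.

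Write the explicit cycle as $\xi=\sum_j(C_j,f_j)$, with curves $C_j\subset X$ and $f_j\in\C(C_j)^{\times}$ satisfying $\sum_j\div(f_j)=0$. I would work with the real regulator $r\colon\CH^2(X,1)\to H^3_{\D}(X,\R(2))$ followed by the projection to the transcendental summand $H^3_{\D}(X,\R(2))_{\mathrm{tr}}$ (the quotient orthogonal to the N\'eron--Severi classes); for a transcendental real $(1,1)$-class $\omega$ this is computed by $\langle r(\xi),\omega\rangle=\sum_j\int_{C_j}\log|f_j|\cdot\omega|_{C_j}$, up to a fixed normalization. Since the transcendental lattice $T(X)$ is orthogonal to $\NS(X)$ and $\log|f_j|$ is constant along a decomposable cycle, the transcendental component $r_{\mathrm{tr}}(\xi)$ — which is exactly what this pairing sees — annihilates both decomposable cycles and torsion, hence depends only on $[\xi]_{\ind}\in\CH^2(X,1)_{\ind}$. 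The theorem therefore reduces to showing that $r_{\mathrm{tr}}(\xi)\neq 0$ for very general $(X,\sigma)\in\M_{r,a}$.

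To see that this reduction suffices, recall that because $\sigma$ is non-symplectic of order $3$ the fixed lattice $H^2(X,\Z)^{\sigma^{*}}$ lies in $\NS(X)$, so $\sigma^{*}$ has no nonzero fixed vector on $T(X)$ and its minimal polynomial there is $x^2+x+1$, which is irreducible over $\Q$; the same holds on $H^3_{\D}(X,\R(2))_{\mathrm{tr}}$, so $a+b\,\sigma_{*}$ is invertible on it for every $(a,b)\in\Q^2\setminus\{0\}$. By naturality $r_{\mathrm{tr}}(\sigma_{*}\xi)=\sigma_{*}r_{\mathrm{tr}}(\xi)$, so any relation $a[\xi]_{\ind}+b[\sigma_{*}\xi]_{\ind}=0$ in $\CH^2(X,1)_{\ind}\otimes\Q$ forces $(a+b\,\sigma_{*})\,r_{\mathrm{tr}}(\xi)=0$, hence $r_{\mathrm{tr}}(\xi)=0$; and $r_{\mathrm{tr}}(\xi)\neq0$ already forces $[\xi]_{\ind}$, and therefore $[\sigma_{*}\xi]_{\ind}$, to be non-torsion. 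Moreover $r_{\mathrm{tr}}(\xi)$ is a real-analytic section of a real-analytic bundle over $\M_{r,a}$, so $\{r_{\mathrm{tr}}(\xi)=0\}$ is either all of $\M_{r,a}$ or a proper, nowhere dense real-analytic subset; in the latter case it is avoided by very general members. Thus it remains only to exhibit a single point at which $r_{\mathrm{tr}}(\xi)$, or an appropriate boundary limit of it, is nonzero; combined with the above, this also gives that the rank of the indecomposable part of $\CH^2(X,1)$ is at least $2$.

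This last step is the crux, and is where the extremality of $(r,a)$ is essential. I would pass to a partial compactification $\overline{\M_{r,a}}$ and choose a one-parameter degeneration toward a cusp of the ball quotient along which $(X,\sigma)$ acquires a $\sigma$-equivariant elliptic fibration whose quotient by $\sigma$ is rational, so that the central fibre is a modular-type elliptic surface and the defining data $(C_j,f_j)$ of $\xi$ degenerate to an Eisenstein symbol on the base modular curve. Via the limit formula for the regulator (a boundary residue of the nearby-cycle mixed Hodge structure), the limit of $r_{\mathrm{tr}}(\xi)$ should then come out as an elliptic dilogarithm value, equivalently a special value of the $L$-function of a weight-$3$ cusp form, whose non-vanishing is known. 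The hard part is precisely this: pinning down the degenerate geometry at the right cusp, verifying that $\xi$ specializes to a genuinely indecomposable (Eisenstein-type) symbol rather than to a decomposable or trivial class, and evaluating the limiting integral; I expect the two extremal families $a=(r+2)/2$ and $a=(22-r)/2$ to need similar but separate degenerations. Granting this, $r_{\mathrm{tr}}(\xi)\ne0$ away from a proper analytic subset, which yields the non-torsion and linear independence statements and hence the asserted rank bound.
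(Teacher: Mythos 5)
Your overall scheme---kill the decomposable part by projecting the regulator onto the transcendental summand, use the order-$3$ symmetry (minimal polynomial $x^2+x+1$ of $\sigma_*$ on the transcendental part) to upgrade one non-vanishing to linear independence and rank $\ge 2$, and reduce everything to a single non-vanishing computation---is exactly the skeleton of the paper's argument; the paper implements the projection via the Eisenstein Jacobian $J(X,\sigma)$ and Proposition \ref{nonzeroprop}, and the rank-$2$ step via the $\Z[\zeta]$-module structure on $J(X,\sigma)$. However, there is a genuine gap at the step you yourself identify as the crux: the non-vanishing is never established. Your proposed route (degenerate to a cusp of the ball quotient, obtain a modular elliptic surface, specialize $\xi$ to an Eisenstein symbol, and evaluate the limit as an elliptic dilogarithm or an $L$-value of a weight-$3$ cusp form ``whose non-vanishing is known'') is an unverified program: none of the required facts (that the cycle specializes to an indecomposable symbol rather than a decomposable or trivial class, that the limit formula applies, that the resulting value is nonzero) is proved or reduced to a citable result. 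The paper's actual mechanism is quite different: a degeneration lemma (Lemma \ref{degenerationlemma}) moving \emph{within} the moduli of smooth Eisenstein $K3$ surfaces, raising $r$ step by step to the extremal case $r=18$, where the transcendental regulator is computed explicitly as an incomplete hypergeometric integral satisfying the inhomogeneous Picard--Fuchs equation (\ref{diffeqnGeqn}); the visibly nonzero inhomogeneous term $\frac{1-\zeta}{\lambda-1}$ shows the normal function is not a period, hence the cycle is non-torsion. Without some concrete substitute for this computation your proof is incomplete.

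A secondary problem is your genericity step. You argue that $\{r_{\mathrm{tr}}(\xi)=0\}$ is a proper nowhere dense \emph{real}-analytic subset and is therefore ``avoided by very general members.'' With the paper's convention (very general $=$ complement of countably many analytic subsets, i.e.\ complex-analytic ones), a proper real-analytic hypersurface need not be avoided by very general points, so at best you would conclude non-vanishing on a dense set, not the stated ``very general'' locus. The paper sidesteps this by working with the transcendental regulator valued in $\C/\Pc(\omega_s)$ and invoking Lemma \ref{importantlemma}: the periods form a countable family of \emph{holomorphic} functions, so the bad locus is a countable union of complex-analytic subsets. You would need an analogous holomorphicity argument (e.g.\ via the normal function and the quotient sheaf $\Qc(\omega)$) rather than a real-analytic one. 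Finally, note that $\xi$ is only defined on a finite cover of the parameter space and comes as a $3$-element multisection $\widetilde{\Xi}$, so ``$r_{\mathrm{tr}}(\xi)$ is a section over $\M_{r,a}$'' needs the local-section statement of Proposition \ref{multisection} to make sense.
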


Here, the \textit{indecomposable part} of $\CH^2(X,1)$ is defined as the quotient of $\CH^2(X,1)$ by $\mathrm{Pic}(X)\otimes \C^\times$, and the indecomposable part of a $(2,1)$-cycle is its image in the indecomposable part.

For the construction of  cycles, we use the model of the Eisenstein $K3$ surfaces as triple covers of $\P^1\times \P^1$  branching along nodal curves $B$ of bidegree $(3,3)$.
The first example of such construction is given by Kond\=o (\cite{Kon}), and generalized in \cite{MOT}.
Let $F_1$ (resp. $F_2$) be a divisor of type $(1,0)$ (resp. $(0,1)$) on $\P^1\times \P^1$ which intersect with $B$ at a point with multiplicity $2$.
Then the pull-back $C_i$ of $F_i$ is a rational curve on the Eisenstein $K3$ surfaces, and $C_1$ and $C_2$ intersect at 3 points.
We can construct a $(2,1)$-cycle from such configuration of rational curves.
In general, cyclic cover model provides a convenient source for constructing $(2,1)$-cycles.
See \cite{Sre14}, \cite{Sat24}, \cite{MS23},  \cite{Sre24}, \cite{Sato} and \cite{Sree} for similar methods in other cases.

The proof of indecomposability  is divided into two steps.
The first part is a degeneration argument which reduces the general case of Theorem \ref{mainintro2} to the  starting case $r=18$.
The second part is an explicit calculation of the \textit{Abel-Jacobi map} in the case $r=18$ to show that the $(2,1)$-cycle is indecomposable for this case.

A key device of the first part is the Eisenstein Jacobian $J(X,\sigma)$, which is a generalized complex torus associated with the Eisenstein lattice of $(X,\sigma)$, together with corresponding Abel-Jacobi map
\begin{equation}
\begin{tikzcd}
\nu_E:\CH^2(X,1)\arrow[r] & J(X,\sigma).
\end{tikzcd}
\end{equation}
The advantages of using Eisenstein Jacobian are that it behaves continuously in a family of Eisenstein $K3$ surfaces and it can be used to detect indecomposability for very general cases.
We prove a degeneration lemma (Lemma \ref{degenerationlemma}), which enables us to reduce the Theorem \ref{mainintro2} to some inductive arguments on $r$ as in \cite{MS23}.
The technical novelty of this paper is that this degeneration lemma can be applied to the wider class of degenerations than those of \cite{MS23}.

The indecomposablity of starting case ($r=18$) is proved by the direct calculation of the regulator map.
In this case, the image of our $(2,1)$-cycles by the regulator map is expressed by the ``incomplete integral" considered in \cite{MT}.
This integral expression enables us to prove that the normal function $G(\lambda)$ associated with our cycles satisfies the following \textit{inhomogeneous hypergeometric differential equation}:
\begin{equation}\label{PFintro}
\lambda(1-\lambda)\dfrac{d^2G}{d\lambda^2} + \left(1-\frac{7}{3}\lambda\right)\dfrac{dG}{d\lambda} -\frac{4}{9}G(\lambda) = \frac{1-\zeta}{\lambda-1}.
\end{equation}
If the right-hand side of (\ref{PFintro}) is zero, this is the Gauss hypergeometric differential equation of type $(2/3,2/3,1)$.
Since the periods of the Eisenstein $K3$ surfaces of the case $r=18$ satisfies the Gauss hypergeometric differential equation of type $(2/3,2/3,1)$, the normal function $G(\lambda)$ and period functions are linearly independent.
This implies that our cycles are indecomposable for very general cases when $r=18$, and we complete the proofs.
The function $G(\lambda)$ is an example of special functions related to hypergeometric functions appearing as normal functions of families of $(2,1)$-cycles.

The structure of this paper is as follows.
In Section 2, we recall basic results on Eisenstein $K3$ surfaces and higher Chow cycles.
In Section 3, we define the Eisenstein Jacobian $J(X,\sigma)$ and prove that we can detect indecomposable cycles via the Eisenstein Jacobian in very general cases.
In Section 4, we introduce the cyclic covering construction of Eisenstein $K3$ surfaces and explain the construction of our $(2,1)$-cycles.
In Section 5, we define a dominant family $\X\rightarrow \hat{U}_r$ of Eisenstein $K3$ surfaces of type $(r,a)$.
In Section 6, we prove the degeneration lemma, which is the crucial for our induction argument.
Then we state the main theorem, and run the induction part of the proof.
In section 7, we calculate the image of our $(2,1)$-cycles under the regulator map for the starting case of the induction, and complete the proof of the main theorem.

\subsection*{Acknowledgement}
The author is very grateful to Shohei Ma for many valuable suggestions, discussions and encouragement.
This work was supported by JSPS KAKENHI 21H00971.

\subsection*{Conventions}
Throughout this paper, we fix a primitive 3rd root of unity $\zeta = \exp\left(2\pi \sqrt{-1}/3\right)$.
``A very general point" on analytic varieties means points on the complement of a union of countably many analytic subsets.

\section{Preliminaries}
In this section, we recall some properties of Eisenstein $K3$ surfaces and higher Chow cycles.
\subsection{Eisenstein $K3$ surfaces}
Let $X$ be a complex $K3$ surface and $\sigma\in \Aut(X)$ be an automorphism of order 3 such that $\sigma^*\omega = \zeta\omega$ where $\omega$ is a non-zero holomorphic 2-form on $X$.
We call such a pair $(X,\sigma)$ an \textit{Eisenstein K3 surface}.
For an Eisenstein $K3$ surface $(X,\sigma)$, let $H^2(X,\C) = H_{1}\oplus H_{\zeta}\oplus H_{\zeta^2}$ be the eigenspace decomposition with respect to $\sigma^*$.
We define the sublattices $L(X,\sigma)$ and $E(X,\sigma)$ of $H^2(X,\Z)$ as follows:
\begin{equation}
\begin{aligned}
&L(X,\sigma) = H_{1}\cap H^2(X,\Z) = H^2(X,\Z)^{\sigma^*} \\
&E(X,\sigma) = (H_{\zeta}\oplus H_{\zeta^2})\cap H^2(X,\Z).\\
\end{aligned}
\end{equation}
The lattice $E(X,\sigma)$ is the orthogonal complement of $L(X,\sigma)$ in $H^2(X,\Z)$.
By the $\sigma^*$-action on $H^2(X,\Z)$, the Eisenstein integer ring $\Z[\zeta]$ naturally acts on $E(X,\sigma)$.
We have the natural Hermitian form $h$ on $E(X,\sigma)$ defined by $h(x,y)=\langle x,y\rangle+\zeta\langle x,\zeta y\rangle+\zeta^2\langle x,\zeta^2y\rangle \in \Z[\zeta]$ for $x,y\in E(X,\sigma)$.
Since $[\omega]\in H^2(X,\C)$ is contained in $H_{\zeta} = E(X,\sigma)\otimes_{\Z[\zeta]} \C$, we have 
\begin{equation}
L(X,\sigma)\subset \NS(X).
\end{equation}

For an Eisenstein $K3$ surface $(X,\sigma)$, the invariants $r,a$ are defined so that the following equations holds.
\begin{equation}
r = \rank L(X, \sigma),\quad E(X,\sigma)^\vee/E(X,\sigma) \simeq (\Z/3\Z)^{\oplus a}.
\end{equation}
The signature of $E(X,\sigma)$ is $(2,20-r)$.
By \cite{AS08} (Theorem 3.3) or \cite{Tak11} (Lemma 2.3), we know that the possible invariants $(r,a)$ are shown in the Figure \ref{Eisdist}.
Furthermore, for each $(r,a)$ in Figure  \ref{Eisdist}, the isomorphism class of $E(X,\sigma)$ (as Eisenstein lattices with Hermitian forms) is uniquely determined.
Hereafter $(E_{r,a},h)$ denotes the abstract Eisenstein lattice with Hermitian form which is isomorphic to $E(X,\sigma)$.
Then there exists the unique primitive embedding of $E_{r,a}$ in the $K3$ lattice $\Lambda_{K3}=U^{\oplus 3}\oplus E_8^{\oplus 2}$.
In this paper, we treat the following cases.
\begin{equation*}
a = \frac{r+2}{2} \text{ for } r\le 10\quad  \text{ or } \quad a = \frac{22-r}{2} \text{ for } 12\le r \le 18.
\end{equation*}

\begin{figure}
\includegraphics[width = 70mm]{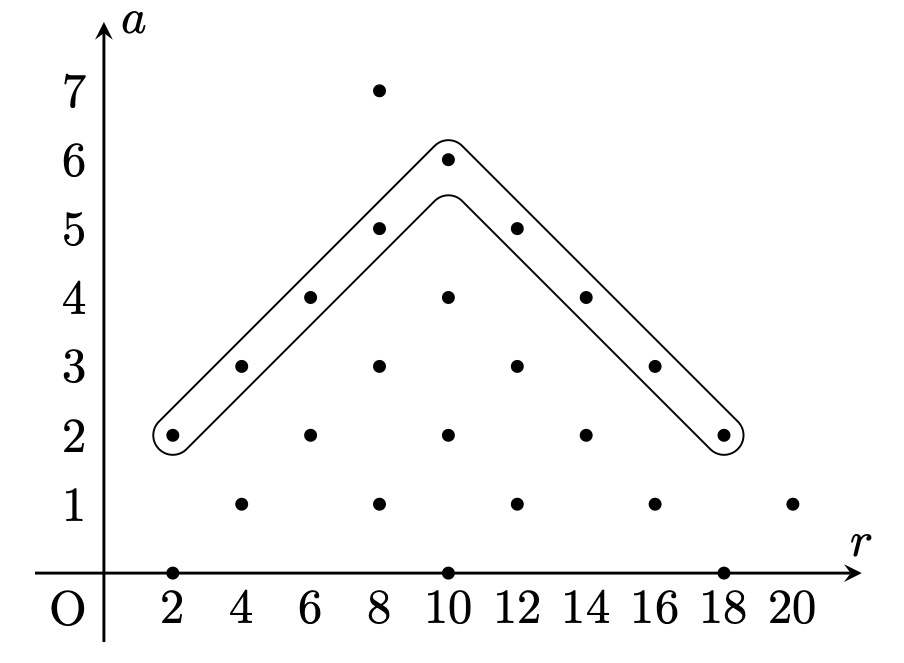}
\caption{The possible invariants $(r,a)$ and the cases we treat in this paper}
\label{Eisdist}
\end{figure}

By the Torelli theorem for $K3$ surfaces, there exists the coarse moduli space $\M_{r,a}$ of Eisenstein $K3$ surfaces of type $(r,a)$ (\cite{DK07} Theorem 11.2).
We briefly recall its construction.
Since $E_{r,a}$ has the unique primitive embedding in $\Lambda_{K3}$, for any marking $\alpha: H^2(X,\Z)\xrightarrow{\:\sim\:} \Lambda_{K3}$, $\alpha$ induces the isometry $E(X,\sigma)\xrightarrow{\:\sim\:} E_{r,a}$.
Since $[\omega] \in  E(X,\sigma)\otimes_{\Z[\zeta]}\C$, $E_{r,a}\otimes_{\Z[\zeta]}\C$ is the period domain.
Let $\B = \{[v]\in \P(E_{r,a}\otimes_{\Z[\zeta]}\C) : \langle v, \overline{v} \rangle >0\}$.
Since the signature of $E_{r,a}$ is $(2,20-r)$, $\B$ is isomorphic to a $10-r/2$-dimensional complex ball (cf. \cite{DK07}, Section 7).
Let $\Gamma$ be the unitary group $U(E_{r,a},h)$ over $\Z[\zeta]$.
The group $\Gamma$ naturally acts on the period domain $\B$ properly discontinuously.
Then the moduli space $\M_{r,a}$ is realized as a Zariski open subset of $\B/\Gamma$.
Since a very general point $[v]\in \B$ satisfies that $\langle v, x\rangle \neq 0$ for any $x\in E_{r,a}$, we have the following.
\begin{prop}\label{NSprop}
For a very general members of $\M_{r,a}$, the corresponding Eisenstein $K3$ surface $(X,\sigma)$ satisfies $\NS(X)=L(X,\sigma)$.
\end{prop}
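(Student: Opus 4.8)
The plan is to transfer the statement, via a marking, to the genericity property of the period recorded just above, and then to use a little $\sigma^*$-representation theory to rule out, for a very general period, the existence of a nonzero algebraic class lying in $E(X,\sigma)$.

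First I would fix a very general $(X,\sigma)\in\M_{r,a}$ together with a marking $\alpha\colon H^2(X,\Z)\xrightarrow{\sim}\Lambda_{K3}$; as recalled above, $\alpha$ restricts to an isometry $E(X,\sigma)\xrightarrow{\sim}E_{r,a}$ and carries $[\omega]$ to a point $[v]\in\B$. I would first dispatch the routine point that ``very general in $\M_{r,a}$'' implies ``$[v]$ very general in $\B$'': for each nonzero $x\in E_{r,a}$ the set $\{[v]\in\B:\langle v,x\rangle=0\}$ is a \emph{proper} analytic subset of $\B$ (proper because the $\C$-bilinear form is non-degenerate on $E_{r,a}\otimes_\Z\C$ while $\B$ is a nonempty open subset of $\P(E_{r,a}\otimes_{\Z[\zeta]}\C)$), so the union over all nonzero $x$ is a $\Gamma$-invariant countable union of proper analytic subsets, descends to such a subset of $\B/\Gamma$, and meets $\M_{r,a}$ in a countable union of proper analytic subsets. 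Thus we may assume $\langle v,x\rangle\neq 0$ for every nonzero $x\in E_{r,a}$, which through $\alpha$ (an isometry over $\C$) becomes
\begin{equation*}
\langle y,\omega\rangle\neq 0\quad\text{for every nonzero }y\in E(X,\sigma).
\end{equation*}

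For the main step, since the inclusion $L(X,\sigma)\subset\NS(X)$ is already established I would only need to prove $\NS(X)\subseteq L(X,\sigma)$. Given $x\in\NS(X)$, set $w=\sigma^*x-x\in H^2(X,\Z)$. Then $w\in E(X,\sigma)$: applying the projector $\tfrac13(1+\sigma^*+(\sigma^*)^2)$ onto $H_1$ annihilates $w$ because $(\sigma^*)^3=\id$, so $w\in(H_\zeta\oplus H_{\zeta^2})\cap H^2(X,\Z)=E(X,\sigma)$. Also $w\in\NS(X)$, since $\sigma$ is an automorphism so $\sigma^*$ preserves $\NS(X)$; in particular $\langle w,\omega\rangle=0$ because $\NS(X)\subseteq H^{1,1}$ is orthogonal to $H^{2,0}=\C\omega$. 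By the displayed genericity this forces $w=0$, i.e.\ $\sigma^*x=x$, hence $x\in H^2(X,\Z)^{\sigma^*}=L(X,\sigma)$. Therefore $\NS(X)=L(X,\sigma)$.

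I do not expect a genuine obstacle: the only slightly delicate points are the passage of the genericity hypothesis through the marking and the assertion that each $\{[v]:\langle v,x\rangle=0\}$ is a \emph{proper} subset of $\B$ (needed so that ``very general'' members of $\M_{r,a}$ exist at all), both of which follow formally from non-degeneracy of the form and from $\B$ being a nonempty open ball. The rest is a short direct computation with the projectors onto the $\sigma^*$-eigenspaces.
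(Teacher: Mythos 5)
Your proof is correct and takes essentially the same route as the paper, whose entire argument is the observation that a very general period $[v]\in\B$ satisfies $\langle v,x\rangle\neq 0$ for every nonzero $x\in E_{r,a}$; your device $w=\sigma^*x-x$ is just a clean integral way of carrying out the resulting deduction (the more common alternative being to project $x$ orthogonally onto $E(X,\sigma)\otimes\Q$ and invoke primitivity of $L(X,\sigma)=H^2(X,\Z)^{\sigma^*}$). One micro-point: properness of $\{[v]:\langle v,x\rangle=0\}$ needs slightly more than non-degeneracy of the form on $E_{r,a}\otimes_\Z\C$, since $\B$ sits in the half-dimensional subspace $H_\zeta$ --- you should note that $H_\zeta$ and $H_{\zeta^2}$ are each isotropic and dually paired, and that an integral $x$ has complex-conjugate components in the two eigenspaces, so $\langle\cdot,x\rangle$ vanishing identically on $H_\zeta$ forces $x=0$.
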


\subsection{Higher Chow cycles}
For a smooth variety $X$ over $\C$, let $\CH^p(X,q)$ be the higher Chow group defined by Bloch (\cite{bloch}).
In this paper, we consider the case $(p,q)=(2,1)$.
An element of $\CH^2(X,1)$ is called a \textit{$(2,1)$-cycle}.
The higher Chow group $\mathrm{CH}^2(X,1)$ is isomorphic to the middle homology group of the following complex. For the proof, see, e.g., \cite{MS2} Corollary 5.3.
\begin{equation*}
K_2^{\mathrm{M}} (\C(X)) \xrightarrow{T} \displaystyle\bigoplus_{C\in X^{(1)}}\C(C)^\times \xrightarrow{\mathrm{div}}\displaystyle\bigoplus_{p\in X^{(2)}}\Z\cdot p 
\end{equation*}
Here $X^{(r)}$ denotes the set of all irreducible closed subsets of $X$ of codimension $r$. 
The map $T$ denotes the tame symbol map from the Milnor $K_2$-group of the function field $\C(X)$.
Therefore, each $(2,1)$-cycle is represented by a formal sum
\begin{equation}\label{formalsum}
\sum_j (C_j, f_j)\in \displaystyle\bigoplus_{C\in X^{(1)}}\C(C)^\times
\end{equation}
where $C_j$ are prime divisors on $X$ and $f_j\in \C(C_j)^\times$ are non-zero rational functions on them such that $\sum_j {\mathrm{div}}_{C_j}(f_j) = 0$ as codimension 2 cycles on $X$.

The intersection product on higher Chow groups induces the following map.
\begin{equation}\label{intersectionproduct}
\mathrm{Pic}(X)\otimes_\Z \Gamma(X,\mathcal{O}_X^\times)=\CH^1(X)\otimes_\Z \CH^1(X,1) \longrightarrow \mathrm{CH}^2(X,1)
\end{equation}
The image of this map is called the \textit{decomposable part} of $\mathrm{CH}^2(X,1)$ and is denoted by $\CH^2(X,1)_\dec$.
A \textit{decomposable cycle} is an element of $\CH^2(X,1)_\dec$.
The quotient $\CH^2(X,1)/\CH^2(X,1)_\dec$ is called the \textit{indecomposable part} of $\CH^2(X,1)$ and is denoted by $\mathrm{CH}^2(X,1)_{\mathrm{ind}}$. 
For a $(2,1)$-cycle $\xi$, $\xi_\ind$ denotes its image in $\mathrm{CH}^2(X,1)_{\mathrm{ind}}$. 

Let $C$ be a prime divisor on $X$, $\alpha\in \Gamma(X,\O_X^\times)$ and $[C]\in \mathrm{Pic}(X)$ be the class corresponding to $C$. 
The image of $[C]\otimes \alpha$ under (\ref{intersectionproduct}) is represented by $(C,\alpha|_C)$ in the presentation (\ref{formalsum}).

\subsection{The regulator map}
For a $\Z$-Hodge structure $H=(H_\Z,F^\bullet)$ of weight 2, the generalized intermediate Jacobian of $H$ is defined as the quotient group 
\begin{equation}
J(H) = \frac{H_\C}{H_\Z + F^2H_\C}.
\end{equation}
This is a generalized complex torus, i.e., a quotient of a $\C$-linear space by a discrete lattice.
Let $X$ be a complex projective surface such that $H^1(X,\Z)=0$ (e.g. $K3$ surface).
We write $J(X) = J(H^2(X,\Z))$ and simply call it the \textit{Jacobian} of $X$.
The following \textit{Beilinson regulator map} plays a role of the Abel-Jacobi map in the study of $(2,1)$-cycles (cf. \cite{KLM}).
\begin{equation}\label{regulatormap}
\nu: \mathrm{CH}^2(X,1) \longrightarrow J(X)
\end{equation}
We recall a formula for the regulator map following \cite{Le} pp. 458--459.
By the intersection product, the Jacobian of $X$ is isomorphic to 
\begin{equation}\label{Delignefunctional}
J(X)\simeq \frac{(F^1H^2(X,\C))^\vee}{H_2(X,\Z)}
\end{equation}
where $(F^1H^2(X,\C))^\vee$ is the dual $\C$-linear space of $F^1H^2(X,\C)$ and we regard $H_2(X,\Z)$ as a subgroup of $(F^1H^2(X,\C))^\vee$ by integration. 

Let $\xi$ be a $(2,1)$-cycle represented by (\ref{formalsum}).
Let $D_j$ be the normalization of $C_j$ and $\mu_j: D_j\rightarrow X$ be the composition of $D_j\rightarrow C_j\hookrightarrow X$. 
We will define a topological 1-chain $\gamma_j$ on $D_j$.
If $f_j$ is constant, we define $\gamma_j = 0$.
If $f_j$ is not constant, we regard $f_j$ as a finite morphism from $D_j$ to $\P^1$. 
Then we define $\gamma_j$ as the pull-back of $[\infty, 0]$ by $f_j$, where $[\infty, 0]$ is a path on $\P^1$ from $\infty$ to $0$ along the positive real axis. 
By the condition $\sum_j \div_{C_j}(f_j) = 0$, $\gamma = \sum_j (\mu_j)_*\gamma_j$ is a topological 1-cycle on $X$.
Since $H_1(X, \Z) = 0$, there exists a 2-chain $\Gamma$ on $X$ such that $\partial\Gamma = \gamma$.
In this paper, $\Gamma$ is referred to as a \textit{2-chain associated with $\xi$}.
Then the image of $\xi$ under the regulator map is represented by the following element in $(F^1H^2(X,\C))^\vee$.
\begin{equation}\label{Levineformula}
\begin{tikzcd}
F^1H^2(X,\C)\ni \text{[}\omega\text{]} \arrow[r,mapsto] & \displaystyle\int_\Gamma\omega  + \sum_j\dfrac{1}{2\pi\sqrt{-1}}\displaystyle\int_{D_j-\gamma_j}\log (f_j)\mu_j^*\omega
\end{tikzcd}
\end{equation}
where $\log(f_j)$ is the pull-back of a logarithmic function on $\P^1-[\infty,0]$ by $f_j$.

From the formula, for a prime divisor $C$ on $X$ and $\alpha\in \C^\times$, the image of the $(2,1)$-cycle represented by $(C,\alpha)$ under the regulator map is $[C]\otimes \frac{1}{2\pi \sqrt{-1}}\log(\alpha)$.
In particular, we have 
\begin{equation}\label{JNS}
\nu(\CH^2(X,1)_\dec) = \NS(X)\otimes_\Z(\C/\Z) \quad (\subset  J(X)).
\end{equation}

\subsection{Normal functions}
We consider the regulator map in the relative setting.
Let $S$ be a complex manifold and $\mathcal H = (\mathcal H_\Z, F^\bullet)$ be a variation of $\Z$-Hodge structure of weight 2 over $S$.
Then 
\begin{equation*}
J(\mathcal H) = \frac{\mathcal H_\Z\otimes \O_S}{\mathcal H_\Z + F^2\mathcal H}
\end{equation*}
is a family of generalized complex tori over $S$.
Let $\pi: \X\rightarrow S$ be a smooth family of projective surfaces such that $H^1(\X_s,\Z)=0$ for every $s\in S$.
Let $\J(\X)\rightarrow S$ be the family of Jacobians attached to the variation of Hodge structure $R^2\pi_*\Z_\X$.
The fiber of $\J(\X)\rightarrow S$ over $s\in S$ is $J(\X_s)$.

Suppose that we have irreducible divisors $\Cc_j$ on $\X$ which are smooth over $S$ and non-zero meromorphic functions  $f_j$ on $\Cc_j$ whose zeros and poles are also smooth over $S$.
Assume that they satisfy the condition $\sum_j\div_{(\Cc_j)_s}((f_j)_s) = 0$ on each fiber $\X_s$.
Then we have a family of $(2,1)$-cycles $\xi = \{\xi_s\}_{s\in S}$ such that $\xi_s\in \CH^2(\X_s,1)$ is represented by the formal sum $\sum_{j}((\Cc_j)_s,(f_j)_s)$.
In this paper, such a family of $(2,1)$-cycles is called an \textit{analytic family of $(2,1)$-cycles}.
Then the section $S\rightarrow \J(\X); s\mapsto \nu(\xi_s)$ is holomorphic (\cite{CL} Proposition 4.1) and satisfies the horizontality condition, namely it is a normal function (\cite{CDKL16} Remark 2.1). 
We denote this section by $\nu(\xi)$ and call it the \textit{normal function associated with $\xi$}.

\section{Eisenstein Jacobians}
In this section, we define the Eisenstein Jacobian and prove its elementary property.
This plays a crucial role in the induction argument and detecting the non-trivial cycles.

Let $(X,\sigma)$ be an Eisenstein $K3$ surface.
The lattice $E(X,\sigma)$ is the kernel of $(\sigma^2)^*+\sigma^*+\id$, hence it is a sub Hodge structure of $H^2(X,\Z)$.
The \textit{Eisenstein Jacobian} $J(X,\sigma)$ is defined by
\begin{equation}
J(X,\sigma) = J(E(X,\sigma)^\vee),
\end{equation}
which is the Jacobian associated with the weight 2 Hodge structure\footnote{We regard the dual as a Hodge structure of weight 2 by Tate twists.
Hereafter we omit the notation of such Tate twists.} $E(X,\sigma)^\vee$, which is the dual of $E(X,\sigma)$.
Since $E(X,\sigma)$ has the $\Z[\zeta]$-module structure by $\sigma^*$-action, $J(X,\sigma)$ also has the $\Z[\zeta]$-module structure.

The inclusion $E(X,\sigma)\hookrightarrow H^2(X,\Z)$ induces the map 
\begin{equation}\label{surjHodge}
\begin{tikzcd}
H^2(X,\Z)=H^2(X,\Z)^\vee \arrow[r] & E(X,\sigma)^\vee,
\end{tikzcd}
\end{equation}
where the first equality follows from the unimodularity of $H^2(X,\Z)$.
Since (\ref{surjHodge}) is surjective after tensoring with $\Q$, this induces the surjection $J(X)\twoheadrightarrow J(X,\sigma)$.
Then we define the variant of the regulator map by the composition
\begin{equation*}
\begin{tikzcd}
\nu_{E}:\CH^2(X,1) \arrow[r,"\nu"] & J(X) \arrow[r,twoheadrightarrow] &J(X,\sigma).
\end{tikzcd}
\end{equation*}

Let $\pi: (\X,\sigma)\rightarrow S$ be a family of Eisenstein $K3$ surfaces, i.e., $\pi:\X\rightarrow S$ is a family of $K3$ surfaces and $\sigma: \X\rightarrow \X$ is an $S$-automorphism of order $3$ such that for every $s\in S$, $\sigma^*\omega_s = \zeta \omega_s$. 
We define the variation of Hodge structure $\mathcal E(\X,\sigma)$ as the kernel of $(\sigma^2)^*+\sigma^*+\id: R^2\pi_*\Z_\X\rightarrow R^2\pi_*\Z_\X$ and $\mathcal E(\X,\sigma)^\vee$ as its dual.
We have the morphism $R^2\pi_*\Z_{\X}\rightarrow  \mathcal E(\X,\sigma)^\vee$ as in (\ref{surjHodge}).
Let $\J(\X,\sigma) = \J(\mathcal E(\X,\sigma)^\vee)$ be the family of generalized complex tori over $S$ associated with $\mathcal E(\X,\sigma)^\vee$.
The fiber of $\J(\X,\sigma)\rightarrow S$ over $s\in S$ is $J(\X_s,\sigma_s)$.
The morphism $R^2\pi_*\Z_{\X}\rightarrow  \mathcal E(\X,\sigma)^\vee$ induces the surjection 
\begin{equation}\label{surjJac}
\begin{tikzcd}
\J(X)\arrow[r,twoheadrightarrow] & \J(\X,\sigma).
\end{tikzcd}
\end{equation}

If $(\X_s,\sigma_s)$ is type $(r,a)$ for all $s\in S$ (in the sense of Section 2.1), we have the natural morphism $S\rightarrow \M_{r,a}$ to the moduli space.

\begin{prop}\label{nonzeroprop}
Suppose that $S\rightarrow \M_{r,a}$ is dominant.
Then for a very general $s\in S$, $\nu_E$ factors $\CH^2(\X_s,1)_\ind$.
\end{prop}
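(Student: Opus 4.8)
The claim is that for very general $s \in S$, the map $\nu_E$ kills the decomposable part $\CH^2(\X_s,1)_\dec$, hence factors through $\CH^2(\X_s,1)_\ind$. By formula (\ref{JNS}), we know $\nu(\CH^2(\X_s,1)_\dec) = \NS(\X_s)\otimes_\Z(\C/\Z)$ inside $J(\X_s)$, so it suffices to show that the composite
\begin{equation*}
\begin{tikzcd}
\NS(\X_s)\otimes_\Z(\C/\Z) \arrow[r,hook] & J(\X_s) \arrow[r,twoheadrightarrow] & J(\X_s,\sigma_s)
\end{tikzcd}
\end{equation*}
is zero. The plan is to first handle this at the level of the underlying lattices/Hodge structures and then upgrade to the statement about the image in the torus.

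First I would use Proposition \ref{NSprop}: since $S\rightarrow \M_{r,a}$ is dominant, the very general fiber has $\NS(\X_s) = L(\X_s,\sigma_s) = H^2(\X_s,\Z)^{\sigma_s^*}$. The key algebraic input is that the map (\ref{surjHodge}), namely $H^2(\X_s,\Z) = H^2(\X_s,\Z)^\vee \twoheadrightarrow E(\X_s,\sigma_s)^\vee$, is the dual of the inclusion $E(\X_s,\sigma_s)\hookrightarrow H^2(\X_s,\Z)$; since $E(\X_s,\sigma_s)$ is by definition orthogonal to $L(\X_s,\sigma_s)$, the sublattice $L(\X_s,\sigma_s)$ lies in the kernel of this map after tensoring with $\Q$ (indeed, a class in $L$ pairs trivially with all of $E$). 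Concretely, the composite $\NS(\X_s)\otimes\C = L(\X_s,\sigma_s)\otimes \C \hookrightarrow H^2(\X_s,\C) \to E(\X_s,\sigma_s)^\vee\otimes\C$ is zero. Therefore the image of $\NS(\X_s)\otimes(\C/\Z)$ in $J(\X_s,\sigma_s) = J(E(\X_s,\sigma_s)^\vee)$ is zero, because $J(\X_s,\sigma_s)$ is a subquotient of $E(\X_s,\sigma_s)^\vee\otimes\C$ and the $\NS$-classes already die in the ambient complex vector space. This kills $\nu_E$ on $\CH^2(\X_s,1)_\dec$ and gives the desired factorization.

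The only genuine subtlety — and the step I expect to require the most care — is the interchange between "very general" and the lattice-theoretic argument: the identity $\NS(\X_s) = L(\X_s,\sigma_s)$ only holds for $s$ outside a countable union of analytic subsets (the locus where the period point meets some $x^\perp$ with $x\in E_{r,a}$), so the conclusion is only claimed for such $s$, which is exactly the hypothesis. One should also note that $L(\X_s,\sigma_s)\subset \NS(\X_s)$ always (stated in Section 2.1), so the relevant inclusion is automatic and only the reverse needs the genericity. Once $\NS(\X_s)$ is identified with the full $\sigma_s^*$-invariant lattice, the vanishing is purely formal from the orthogonal decomposition $H^2(\X_s,\Z)\otimes\Q = (L\otimes\Q)\oplus(E\otimes\Q)$ and the unimodularity used to identify $H^2$ with its dual. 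I would close by remarking that this is the analogue, in the Eisenstein setting, of the corresponding fact in \cite{MS23}, and that it is precisely what makes $\nu_E$ a tool for detecting indecomposability for very general members.
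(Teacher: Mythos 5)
Your proposal is correct and follows essentially the same route as the paper: reduce to killing the decomposable part, identify its image under $\nu$ via (\ref{JNS}), invoke Proposition \ref{NSprop} and dominance to get $\NS(\X_s)=L(\X_s,\sigma_s)$ for very general $s$, and conclude by the orthogonality of $L(\X_s,\sigma_s)$ and $E(\X_s,\sigma_s)$. The extra care you take with the dual identification and the genericity locus is consistent with, and slightly more explicit than, the paper's argument.
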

\begin{proof}
It suffices to show that $\nu_{E}$ annihilates the decomposable part for a very general $s\in S$.
By (\ref{JNS}), the image of $\CH^2(\X_s,1)_\dec$ under the regulator map is $\NS(\X_s)\otimes_\Z (\C/\Z)$.
By Proposition \ref{NSprop} and the assumption that $S\rightarrow \M_{r,a}$ is dominant, this coincides with $L(\X_s,\sigma_s)\otimes_\Z (\C/\Z)$ for a very general $s\in S$.
Since $L(\X_s,\sigma_s)$ and $E(\X_s,\sigma_s)$ are orthogonal to each other, $L(\X_s,\sigma_s)\otimes_\Z (\C/\Z)$ is mapped to $0$ by the projection $J(\X_s)\twoheadrightarrow J(\X_s,\sigma_s)$.
Thus $\nu_{E}$ annihilates $\CH^2(\X_s,1)_\dec$ for a very general $s\in S$.

\end{proof}

\section{Construction of cycles}
In this section, we recall a construction of Eisenstein $K3$ surfaces as a triple covering of $\P^1\times \P^1$ and we construct $(2,1)$-cycles on such Eisenstein $K3$ surfaces.
Hereafter we use the notation $\O(n,m)$ which denotes the sheaf $\O_{\P^1\times \P^1}(n,m)$ on $\P^1\times \P^1$ for $n,m\in \Z$.

\subsection{A triple covering of $\P^1\times \P^1$}
Let $Y=\P^1\times \P^1$ and $B\in |\O(3,3)|$ be a divisor satisfying the following condition $(\mathbf{B})$.
\begin{equation*}
(\mathbf{B}):  \text{$B$ is reduced and has at worst nodes as singularities.}
\end{equation*}
Note that $B$ is not necessarily irreducible.
Then we define $\pi_1:\overline{X}\rightarrow X$ as the triple covering of $Y$ branching along $B$ and $\pi_2:X\rightarrow \overline{X}$ as the minimal resolution of singularities.
Let $\pi = \pi_1\circ \pi_2$.
\begin{equation}
\begin{tikzcd}
X \arrow[r,"\pi_2"] \arrow[rr,bend right = 20,"\pi"']& \overline{X} \arrow[r,"\pi_1"] & Y
\end{tikzcd}
\end{equation}
By the ramification formula, $X$ is a $K3$ surface.
The covering transformation on $\overline{X}$ lifts to an automorphism $\sigma\in \Aut(X)$ of order 3.
Since $Y$ is rational, the action of $\sigma$ on a non-zero holomorphic 2-form $\omega$ on $X$ is non-trivial.
Therefore, by replacing $\sigma$ by $\sigma^{-1}$ if necessary, we have an Eisenstein $K3$ surface $(X,\sigma)$.
We say that $(X,\sigma)$ is the Eisenstein $K3$ surface \textit{associated with $B$.}

By Proposition 4.6 in \cite{MOT}, we can calculate generators of the lattice $L(X,\sigma)$ of the Eisenstein $K3$ surface $(X,\sigma)$ associated with $B$ as follows.
\begin{prop}\label{genL}
For each node $q\in B$, let $\Lambda_q$ be the sublattice of $\NS(X)$ generated by two exceptional curves over the $A_2$-singular point on $\overline{X}$ over $q$.
For each irreducible component $B_i$ of $B$, let $D_i$ be the reduced curve on $X$ such that $\pi(D_i) = B_i$.
Then $L(X,\sigma)$ is generated by the sublattice $\pi^*(\NS(\P^1\times \P^1))\oplus \bigoplus_q \Lambda_q$ and classes of $D_i$.
\end{prop}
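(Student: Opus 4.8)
The plan is to deduce the statement from [MOT, Proposition 4.6], which computes the invariant lattice of the triple cover of $Y=\P^1\times\P^1$ branched along $B$; the remaining work is to translate that computation into the generators listed here and, above all, to verify the integral (as opposed to rational) form of the assertion.

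First I would set up the eigenspace picture. Since $\sigma$ generates the Galois group of $\pi_1\colon\overline{X}\to Y$, one has $H^2(\overline{X},\Q)^{\sigma^*}=\pi_1^*H^2(Y,\Q)=\Q\,h_1\oplus\Q\,h_2$ with $h_1,h_2$ the classes of the two rulings. Passing to the minimal resolution $\pi_2\colon X\to\overline{X}$ only enlarges $H^2$ by the classes of the exceptional curves, and condition $(\mathbf{B})$ forces $\overline{X}$ to have exactly one $A_2$-point over each node $q$ of $B$ and no other singularity. A local analysis of the model $z^3=xy$ near $q$, on which $\sigma$ acts by $z\mapsto\zeta z$, shows that $\sigma$ preserves each of the two $(-2)$-curves over $q$, so their classes lie in $L(X,\sigma)$ and span $\Lambda_q\otimes\Q$. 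Hence $L(X,\sigma)\otimes\Q$ is spanned by $\pi^*\NS(Y)\otimes\Q$ together with the $\Lambda_q\otimes\Q$; in particular each $[D_i]$, being rationally a combination of $\pi^*[B_i]$ and exceptional classes, already lies in this span, which reproves the rational form of the claim.

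The substantive point is the integral statement: that $\pi^*\NS(Y)\oplus\bigoplus_q\Lambda_q$ together with the classes $[D_i]$ generate $L(X,\sigma)$ over $\Z$, not merely a finite-index sublattice. On $\overline{X}$, Galois descent shows that the reduced ramification curve $\overline{D}_i$ over $B_i$ satisfies $3\,\overline{D}_i\sim\pi_1^*B_i$, so $\overline{D}_i=\frac13\pi_1^*[B_i]$ supplies the "new" integral classes invisible over $\Q$; passing to $X$, one has $\pi_2^*\overline{D}_i=D_i+\sum_q m_{i,q}\cdot(\text{exceptional curves at }q)$ for suitable multiplicities $m_{i,q}$ determined by the local model at each node, so the $[D_i]$ together with the $\Lambda_q$ recover these. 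That the resulting sublattice is then all of $L(X,\sigma)$, and not a proper finite-index subgroup, is exactly what [MOT, Proposition 4.6] establishes, via an index/discriminant computation; I would quote that computation and carefully check the bookkeeping of the correction terms $m_{i,q}$ when transferring classes from $\overline{X}$ to $X$.

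The main obstacle is precisely this last saturation step — verifying that the listed classes cut out the full invariant lattice rather than a proper subgroup. A secondary wrinkle, if [MOT, Proposition 4.6] is stated only for smooth branch curves, is to justify the extension to branch curves satisfying $(\mathbf{B})$: this can be handled either by the direct local computation at the nodes indicated above, or by specializing from a family with smooth generic branch curve and checking that the claimed generators degenerate accordingly. Everything else — the rational computation and the eigenspace bookkeeping — is routine.
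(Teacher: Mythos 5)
Your plan matches the paper exactly: the paper offers no independent argument for this proposition and simply cites [MOT, Proposition 4.6] as establishing the generators of $L(X,\sigma)$, which is precisely the reduction you propose (your eigenspace bookkeeping, the order-$3$ argument showing $\sigma$ preserves each exceptional $(-2)$-curve over a node, and the relation $3\overline{D}_i\sim\pi_1^*B_i$ are consistent supporting details, with the saturation step deferred to [MOT] just as in the paper).
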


From this result, we can calculate the invariants $(r,a)$ of the Eisenstein $K3$ surface associated with $B$ as follows.
\begin{prop}\label{racomp}
Let $k$ be the number of irreducible components of $B$ and $n$ be the number of nodes on $B$.
Then $r=2n+2$ and $a=n+4-2k$.
\end{prop}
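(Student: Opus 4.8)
The plan is to combine Proposition \ref{genL} with the formula for $r = \rank L(X,\sigma)$ and the discriminant computation for $E(X,\sigma)$, using the exact sequence relating $E(X,\sigma)^\vee/E(X,\sigma)$ to the analogous quotient for $L(X,\sigma)$ inside the unimodular lattice $H^2(X,\Z)$. First I would compute the rank. By Proposition \ref{genL}, $L(X,\sigma)$ is generated by $\pi^*\NS(\P^1\times\P^1)$ (rank $2$), by the sublattices $\Lambda_q$ (each of rank $2$, one for each of the $n$ nodes, corresponding to the two exceptional curves over each $A_2$ point), and by the classes $[D_i]$ for the $k$ irreducible components $B_i$ of $B$. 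The subtle point is that these generators are not independent: over each node $q$ on a component $B_i$, the exceptional curves in $\Lambda_q$, the class $[D_i]$, and the pullback classes satisfy a relation coming from $\pi^* B_i = 3 D_i + \sum_{q\in B_i}(\text{exceptional curves})$ (with appropriate multiplicities for the $A_2$ configuration). So I would set up the $\Z$-span carefully: the lattice $\pi^*\NS(\P^1\times\P^1)\oplus\bigoplus_q\Lambda_q$ has rank $2 + 2n$, and adjoining the $k$ classes $[D_i]$ adds exactly $k$ to the rank only if they were independent modulo the previous lattice — but in fact each $[D_i]$ is, rationally, determined by $\pi^*B_i$ and the exceptional curves over the nodes lying on $B_i$, so it contributes nothing to the rank. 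Hence $r = 2n + 2$, which matches the claim (and is consistent with the general constraint that $r$ is even).

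Next I would compute $a$ via the discriminant. Since $H^2(X,\Z)$ is unimodular and $L(X,\sigma)$, $E(X,\sigma)$ are orthogonal complements of each other, we have $\mathrm{disc}\, L(X,\sigma) = \pm\,\mathrm{disc}\, E(X,\sigma)$ up to sign, and in particular the finite groups $L(X,\sigma)^\vee/L(X,\sigma)$ and $E(X,\sigma)^\vee/E(X,\sigma)$ are isomorphic. So it suffices to compute $|L(X,\sigma)^\vee/L(X,\sigma)| = 3^a$, i.e., $|\mathrm{disc}\, L(X,\sigma)|$, from the explicit generators. The sublattice $M := \pi^*\NS(\P^1\times\P^1)\oplus\bigoplus_q\Lambda_q$ has discriminant $|\mathrm{disc}(\pi^*\NS(\P^1\times\P^1))|\cdot\prod_q|\mathrm{disc}\,\Lambda_q|$; the pullback of the hyperbolic plane $U$ under a degree-$3$ map contributes a factor $3^2 = 9$ (the pullback of $U$ sits in its own dual with index $3$ in each factor, since $\pi^*$ multiplies the pairing), and each $\Lambda_q \cong A_2$ has discriminant $3$. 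So $|\mathrm{disc}\, M| = 9 \cdot 3^n = 3^{n+2}$. Then passing from $M$ to $L(X,\sigma)$ by adjoining the $k$ classes $[D_i]$: each such adjunction enlarges the lattice by a subgroup of the discriminant group, and one checks from the relation $\pi^*B_i = 3D_i + (\text{exceptional part})$ that adjoining $[D_i]$ divides the discriminant by $3^2 = 9$ (the class $[D_i]$ is $\tfrac13$ of an integral combination modulo $M$, and it is isotropic-enough in the discriminant form that the index gain is $3$ in ``each direction''). Hence $|\mathrm{disc}\, L(X,\sigma)| = 3^{n+2}/9^{k} \cdot (\text{correction}) = 3^{n+2-2k} \cdot 3^{?}$; reconciling this with $3^a$ gives $a = n + 4 - 2k$ once the correct index contributions are tracked.

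The main obstacle I anticipate is precisely the bookkeeping in the last step: determining the exact index $[L(X,\sigma):M']$ where $M'$ is generated by $M$ and the $[D_i]$'s, and confirming it is $3^{2k}$ rather than $3^k$ or something intermediate. This requires understanding how the $k$ relations $\pi^*B_i = 3D_i + \sum(\text{exc.})$ interact — in particular whether the classes $[D_i]$ mod $M$ are independent in the discriminant group $M^\vee/M \cong (\Z/3)^{n+2}$, and whether adjoining each one kills a $(\Z/3)^2$ (a hyperbolic plane over $\F_3$) in the discriminant form. The cleanest way to pin this down is to use the fact, quoted in the excerpt, that the isomorphism class of $E_{r,a}$ is uniquely determined by $(r,a)$ together with the constraint $a \le (r+2)/2$ (here $r = 2n+2$ gives $(r+2)/2 = n+2$, and indeed $a = n+4-2k \le n+2$ iff $k \ge 1$, which holds); so once $r = 2n+2$ is established and $a$ is shown to have the right parity and range, a single clean discriminant count on $M$ plus the relations forces $a = n+4-2k$. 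I would verify the whole computation on the base case $k=1$, $n = $ small (e.g. Kond\=o's example, where $(X,\sigma)$ is explicitly known) as a sanity check before writing the general argument.
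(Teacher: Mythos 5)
Your computation of $r$ coincides with the paper's: both observe that the classes $[D_i]$ lie in $\bigl(\pi^*\NS(\P^1\times\P^1)\oplus\bigoplus_q\Lambda_q\bigr)\otimes\Q$, so the rank is $2+2n$. For $a$, however, you take a genuinely different route from the paper, and your version has a gap you have not closed. The paper does not compute discriminants at all: it observes that the fixed locus of $\sigma$ consists of the $k$ curves $D_i$ and the $n$ isolated fixed points where the two exceptional curves over each node meet, and then quotes the formula $a=n+4-2k$ from Artebani--Sarti \cite{AS08} (p.~912) relating these topological invariants of the fixed locus to the lattice invariant $a$. Your discriminant approach is viable in principle (since $L(X,\sigma)$ and $E(X,\sigma)$ are primitive orthogonal complements in the unimodular $H^2(X,\Z)$, their discriminant groups are isomorphic, and $|\mathrm{disc}\,M|=9\cdot 3^n=3^{n+2}$ for $M=\pi^*\NS(\P^1\times\P^1)\oplus\bigoplus_q\Lambda_q$ is correct), but the decisive step --- the index $[L(X,\sigma):M]$ --- is exactly where your bookkeeping fails. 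Matching $3^{n+2-2j}=3^{n+4-2k}$ forces $[L:M]=3^{k-1}$, not $3^k$: the $k$ classes $[D_i]$ each have order $3$ in $M^\vee/M$ but satisfy precisely one relation (coming from $\pi^*B=3\pi^*(F_1+F_2)$, which makes $\sum_i[D_i]$ integral over $M$), so they generate $(\Z/3)^{k-1}$. Your first-pass answer $3^{n+2}/9^k$ gives $a=n+2-2k$, off by $9$, and you leave the discrepancy as an unresolved ``correction.''

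Your proposed fallback does not repair this: knowing $r=2n+2$ and that $a$ lies in the admissible range does not determine $a$, since for a fixed $r$ several values of $a$ occur (see Figure~\ref{Eisdist}); the uniqueness of $E_{r,a}$ is uniqueness \emph{given} the pair $(r,a)$, so invoking it presupposes the very value of $a$ you are trying to compute. To complete your route you would need to verify the local multiplicities of $\pi^*B_i$ along the exceptional curves of the $A_2$ resolutions and deduce the single relation among the $[D_i]$ in $M^\vee/M$; alternatively, adopt the paper's shortcut via the fixed-locus formula of \cite{AS08}.
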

\begin{proof}
Since all $D_i$ are contained in $\pi^*(\NS(\P^1\times \P^1))\otimes \Q$, by Proposition \ref{genL}, $r=\rank L(X,\sigma)$ is equal to the rank of $\pi^*(\NS(\P^1\times \P^1))\oplus \bigoplus_q \Lambda_q$.
Since $\rank \Lambda_q=2$ for each $q$, we have $r=2n+2$.
Next, we will calculate the invariant $a$.
Fixed curves by the $\sigma$-action coincide with $D_i$, so the number of fixed curves coincides with $k$.
Furthermore, isolated fixed points by the $\sigma$-action coincide with the intersection points of two exceptional divisors over the nodes of $B$, so the number of isolated fixed points is $n$.
By \cite{AS08} pp.~912, we have the relation $a=n+4-2k$ between the topological invariants $k,n$ and the lattice invariant $a$.
Thus we have the result.
\end{proof}

\subsection{Construction of cycles}
Assume $B\in |\O(3,3)|$ satisfies the condition $(\mathbf{B})$.
Let $(X,\sigma)$ be the Eisenstein $K3$ surface associated with $B$.
Let $F_1\in |\O(1,0)|$ and $F_2\in |\O(0,1)|$ be divisors satisfying the following 3 conditions.
\begin{enumerate}
\renewcommand{\theenumi}{F\arabic{enumi}}
\item There exists a point $q_1\in B\cap F_1$ such that the intersection multiplicity of $B$ and $F_1$ at $q_1$ is $2$.
\item There exists a point $q_2\in B\cap F_2$ such that the intersection multiplicity of $B$ and $F_2$ at $q_2$ is $2$.
\item The points $q_1$ in (i) and $q_2$ in (ii) are distinct.
\end{enumerate}
The points $q_1,q_2$ can be a nodes of $B$ and is uniquely determined by $F_i$ and for each $B$.
If $h(x,y)$ is a local equation of $B$, $F_1$ is given by $x=\alpha$ where $\alpha$ is a root of $\mathrm{Res}_y\left(h,\dfrac{\del h}{\del y}\right)$ with multiplicity at most 2 and $F_2$ is given by $y=\beta$ where $\beta$ is a root of $\mathrm{Res}_x\left(h,\dfrac{\del h}{\del x}\right)$ with multiplicity at most 2.
In particular, there exists the finite number of the choices of $F_1$ and $F_2$ for each $B$.
For $i=1,2$, $F_i$ is called \textit{a tangent type} if $q_i$ is a smooth point of $B$ and \textit{a node-passing type} if $q_i$ is a node of $B$.
If $x=\alpha$ (resp. $y=\beta$) is a simple root of the resultant, $F_1$ (resp. $F_2$) is a tangent type and otherwise $F_1$ (resp. $F_2$) is a node-passing type.
The typical situation of $(B,F_1,F_2)$ is illustrated in Figure \ref{typ}.
In this figure, $F_1$ is a node-passing type and $F_2$ is a tangent type.

\begin{figure}
\includegraphics[width = 70mm]{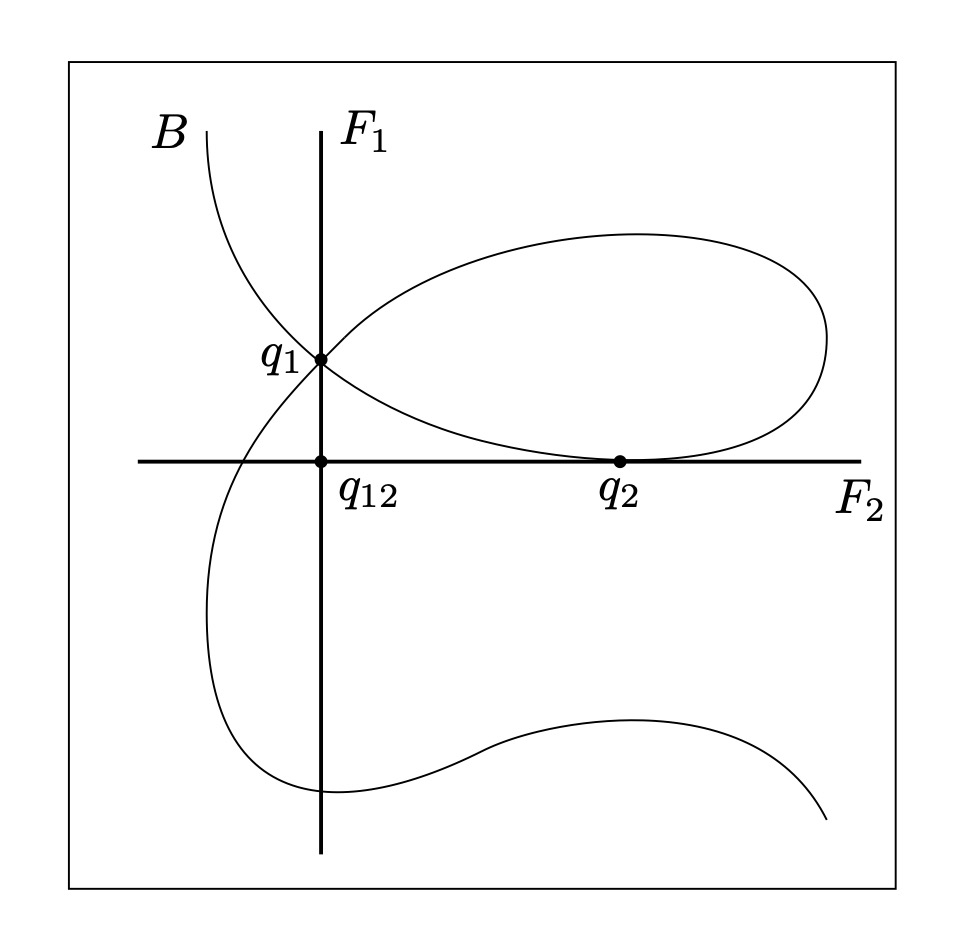}
\caption{The typical situation of $F_1,F_2$}
\label{typ}
\end{figure}

For $i=1,2$, let $C_i\subset X$ be the strict transformation of $\pi_1^{-1}(F_i)$.
By the condition (F1) and (F2), $C_i\rightarrow F_i$ is a triple covering ramified at $2$ points, hence $C_i$ is a rational curve.
If $F_i$ is a node-passing type, the cusp on $\pi^{-1}_1(F_i)$ over $q_i$ is blown-up by $X\rightarrow \overline{X}$, thus $C_i$ is smooth.
On the other hand, if $F_i$ is a tangent type, $C_i$ has the cusp over $q_i$.

Let $q_{12}$ be the unique intersection point of $F_1$ and $F_2$.
By the condition (F3), $q_{12}\not \in B$, so $\pi^{-1}(q_{12})$ consists of $3$ points.
We choose a point $p_0\in \pi^{-1}(q_{12})$ and set $p_1 = \sigma(p_0)$ and $p_2 = \sigma(p_1)$.
Since $C_1$ and $C_2$ are rational curves and $p_0,p_1$ are points on them, we can find rational functions $f_{1}\in \C(C_1)^\times$ and $f_{2}\in \C(C_2)^\times$ such that 
\begin{equation}\label{divrelation}
\div_{C_1}(f_{1}) = -\div_{C_2}(f_{2}) = p_1-p_0.
\end{equation}

\begin{defn}
Let $\xi$ be the $(2,1)$-cycle on $X$ represented by the formal sum
\begin{equation}
(C_1,f_{1}) + (C_2,f_{2}).
\end{equation}
Let $\widetilde{\Xi} =\{\xi,\sigma_*(\xi),\sigma^2_*(\xi)\}\subset \CH^2(X,1)$.
The subset $\widetilde{\Xi}$ does not depend on the choice of $p_0\in \pi^{-1}(q_{12})$.
Let $\Xi = \nu_E(\widetilde{\Xi})\subset J(X,\sigma)$.

We have freedom of the choices of $f_{1}$ and $f_{2}$, but the ambiguity is only the multiplication by some nonzero constant functions.
If we take different rational functions satisfying (\ref{divrelation}), the resulting $(2,1)$-cycles differs by decomposable cycles of the form $(C_1,\alpha_1)+(C_2,\alpha_2)$, where $\alpha_1,\alpha_2\in \C^\times$.
By Proposition \ref{genL}, $[C_1],[C_2]\in L(X,\sigma)$, so the image of them under $\nu_{E}$ is $0$.
Therefore, $\Xi$ is not affected by such ambiguity.
\end{defn}

As we see above, for $i=1,2$, $C_i$ has a cusp $\hat{q_i}\in C_i$ over $q_i$ if $F_i$ is a tangent type.
Therefore, in this case, $\xi$ does not form an \textit{analytic} family of $(2,1)$-cycles in the sense of this paper.
To overcome this, we consider the blowing-up $b: \widetilde{X}\rightarrow X$ along the union of points $\{\hat{q_i}: \text{$F_i$ is a tangent type. ($i=1,2$)}\}$.
If we denote the strict transformation of $C_i$ by $\widetilde{C_i}$, the formal sum $(\widetilde{C}_1,b^*(f_{1})) + (\widetilde{C}_2,b^*(f_{2}))$ represents the $(2,1)$-cycle $b^*(\xi)$.
Since $\widetilde{C}_i$ is smooth, $b^*(\xi)$ is represented by rational functions on smooth curves.

Since the morphism $b_*\circ b^*:H^2(X,\Z)\rightarrow H^2(X,\Z)$ is the identity map, we have $\nu(\xi) = b_*(\nu(b^*(\xi)))$.
Thus $\Xi$ is the image of $\nu(b^*(\xi)), \nu(b^*(\sigma_*(\xi)))$ and $\nu(b^*(\sigma^2_*(\xi)))$ under the map $J(\widetilde{X})\rightarrow J(X)\rightarrow J(X,\sigma)$.

\section{The parameter space $\hat{U}_r$}
In this section, we define the family of Eisenstein $K3$ surfaces and $(2,1)$-cycles over a parameter space $\hat{U}_r$, which is a dominant family of type $\M_{r,a}$.

We define the following locally closed subsets $U_r$ of $|\O(3,3)|$ for $r=2,4,\dots, 18$.
In the following, $B_{a,b}$ and $B'_{a,b}$ denote reduced and irreducible curves of bidegree $(a,b)$.
(See Figure. \ref{branchlocus}.)
\begin{equation*}
\begin{aligned}
U_{2} &= \left\{B_{3,3} \: | \:B_{3,3}\text{ is smooth.}\right\} \\
U_{4} &= \left\{B_{3,3}  \:| \:B_{3,3}\text{ has a node.}\right\} \\
U_{6} &= \left\{B_{3,3}  \:| \:B_{3,3}\text{ has 2 nodes.}\right\} \\
U_{8} &= \left\{B_{3,3}  \:| \:B_{3,3}\text{ has 3 nodes.}\right\} \\
U_{10} &= \left\{B_{3,3}  \:| \:B_{3,3}\text{ has 4 nodes.}\right\} \\
\end{aligned}
\end{equation*}
\begin{equation*}
\begin{aligned}
U_{12} &=  \left\{B_{2,1}+B_{1,2} \:|\: B_{2,1}\text{ and }B_{1,2} \text{ are smooth and their sum satisfies $(\textbf{B})$.} \right\}\\
U_{14} &=  \left\{B_{1,2}+B_{1,1}+B_{1,0} \:|\:\text{Each of $B_{a,b}$ is smooth and their sum satisfies $(\textbf{B})$.} \right\}\\
U_{16} &= \left\{B_{1,1}+B'_{1,1}+B_{1,0}+B_{0,1} \:\left|\: 
\begin{aligned}
&\text{Each of $B_{a,b}$ and $B'_{a,b}$ is smooth}\\
&\text{and their sum satisfies $(\textbf{B})$.}
\end{aligned}
\right. \right\}\\
U_{18} &= \left\{B_{1,1}+B_{1,0}+B'_{1,0}+B_{0,1}+B'_{0,1} \:\left|\: 
\begin{aligned}
&\text{Each of $B_{a,b}$ and $B'_{a,b}$ is smooth}\\
&\text{and their sum satisfies $(\textbf{B})$.}
\end{aligned}
\right. \right\}
\end{aligned}
\end{equation*}
For each $B \in U_r$, we can construct an Eisenstein $K3$ surface $(X,\sigma)$ associated with $B$ by the construction.
Furthermore, we can construct a family of Eisenstein $K3$ surfaces $(\X ,\sigma) \rightarrow U_r$ if we shrink $U_r$ to an Zariski open subset.

\begin{figure}

\begin{minipage}{.45\linewidth}

\includegraphics[width = 55mm]{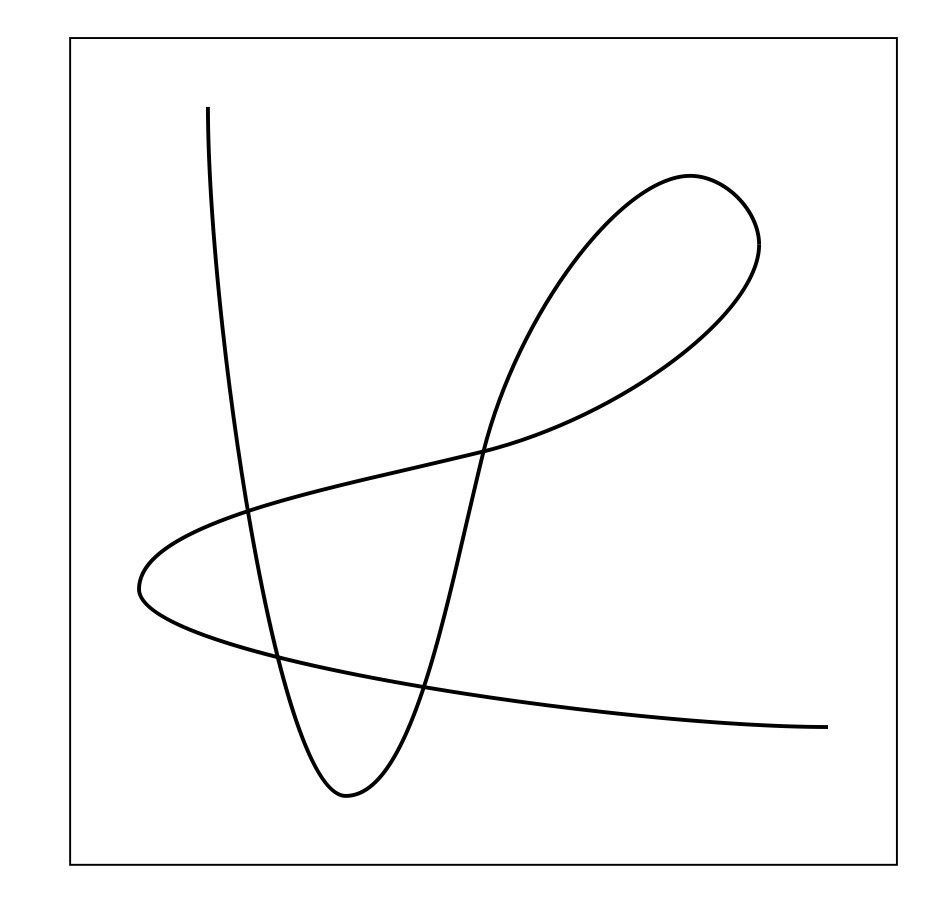}
\caption*{$r=10$}
\end{minipage}
\begin{minipage}{.45\linewidth}
\includegraphics[width = 55mm]{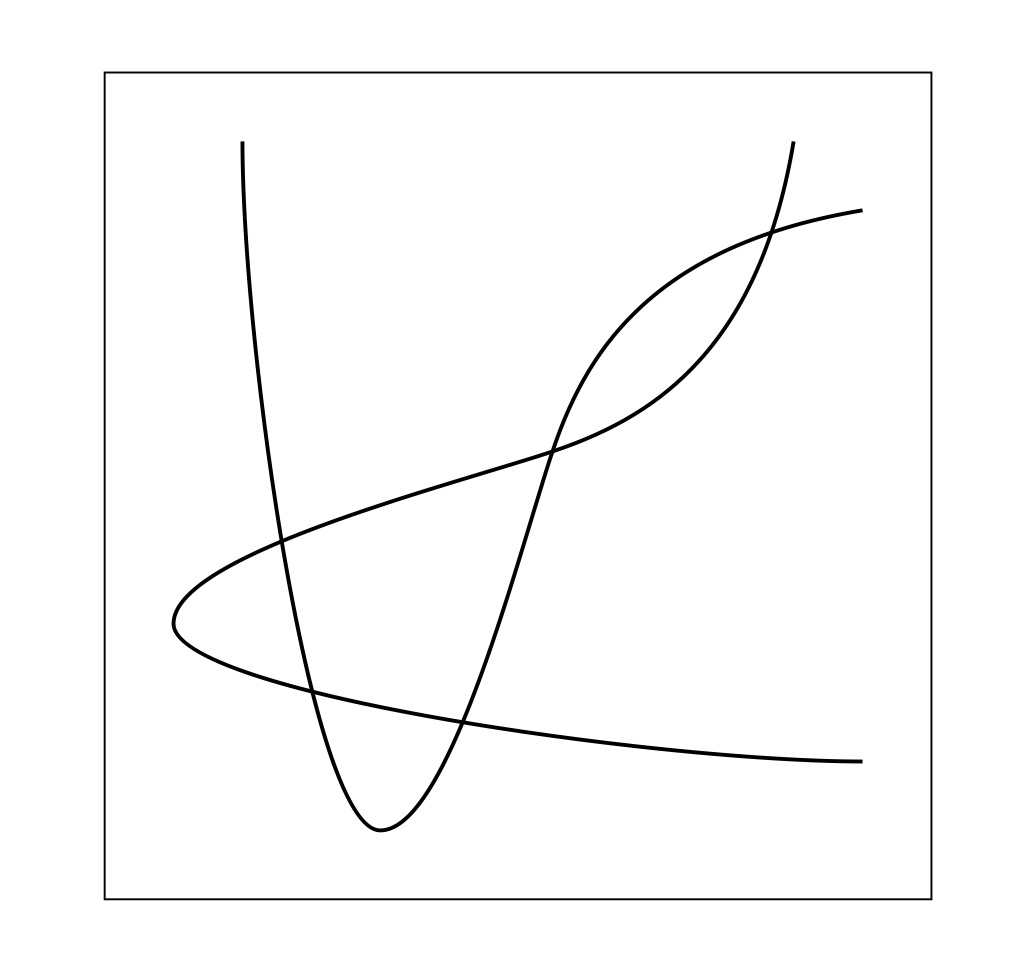}
\caption*{$r=12$}
\end{minipage}
\begin{minipage}{.45\linewidth}
\includegraphics[width = 55mm]{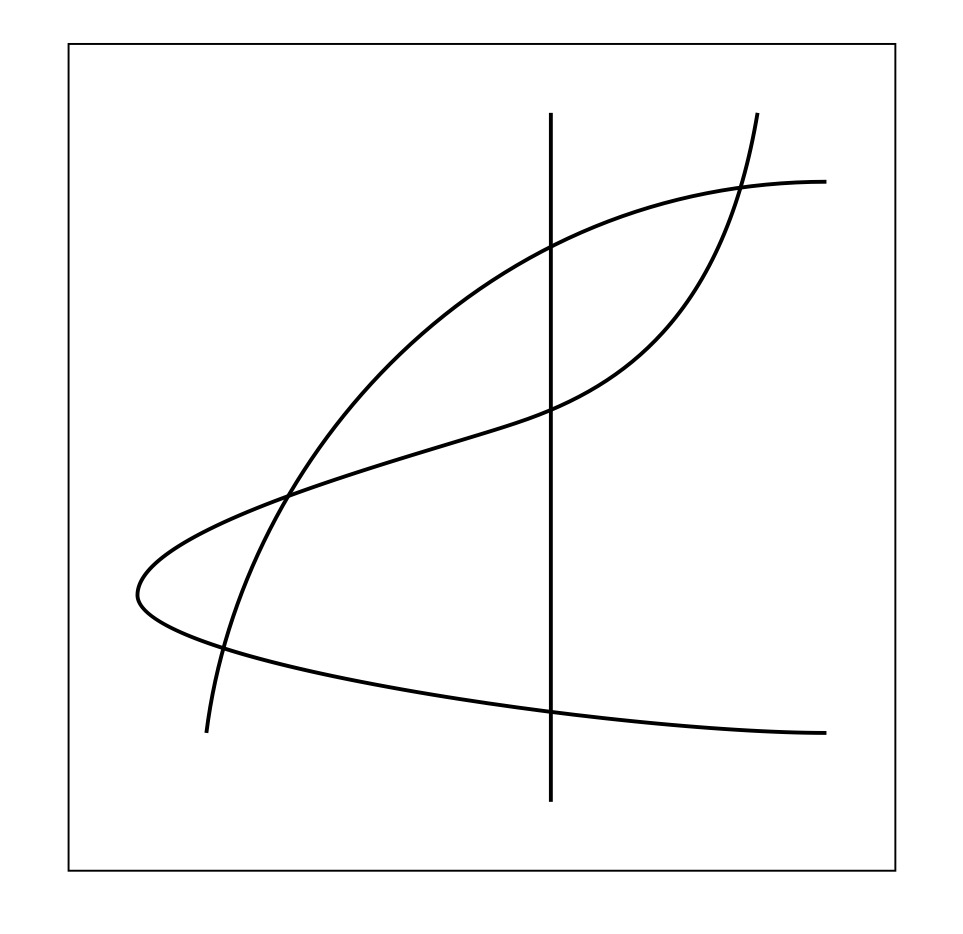}
\caption*{$r=14$}
\end{minipage}
\begin{minipage}{.45\linewidth}
\includegraphics[width = 55mm]{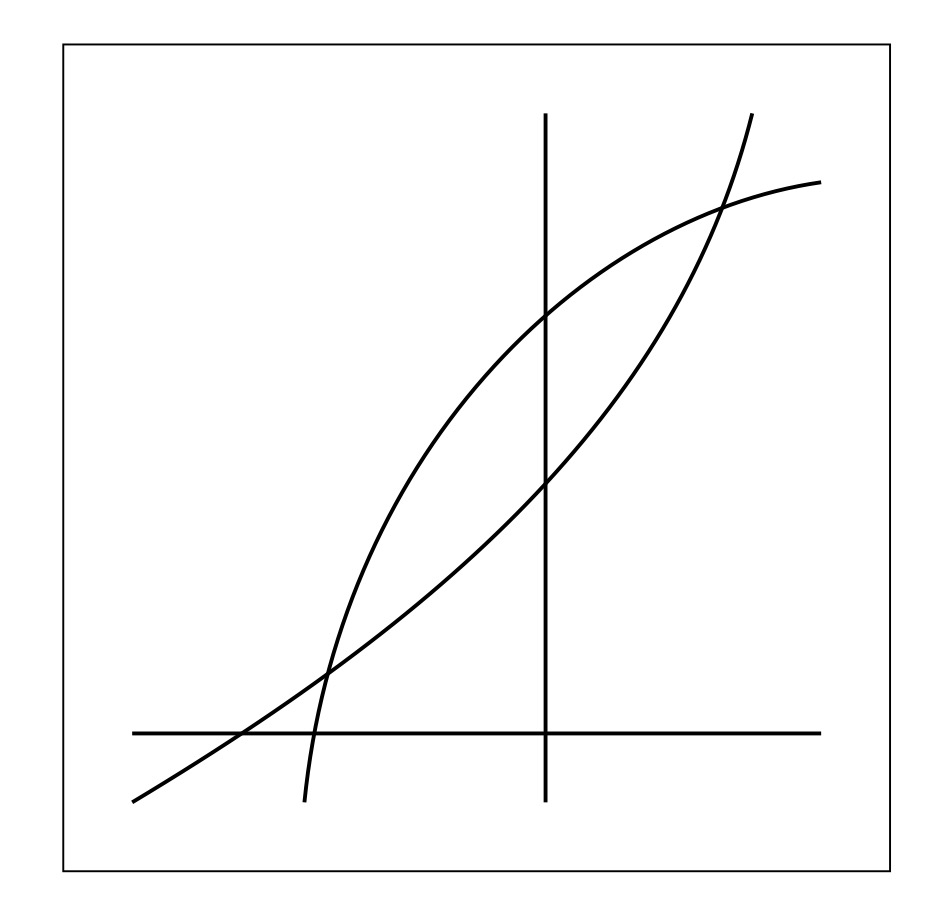}
\caption*{$r=16$}
\end{minipage}

\centering

\caption{Branching curvess corresponding to $r=10,12,14,16$}
\label{branchlocus}
\end{figure}

The precise construction of $\X \rightarrow U_r$ is as follows.
Let $\Y = \P^1\times \P^1\times U_r$.
By removing a hyperplane section from $U_r$ if necessary, we can take a global section $s\in \Gamma(\Y,\O_{\Y}(3,3))$ such that for each $B \in U_r$, $\div(s_B) = B$.
Then by taking the triple covering associated with $(\O_\Y(-1,-1),s)$, we have a $3:1$ map $\overline{\X} \rightarrow \Y$.
Since the number of nodes of $B$ is constant, the singular locus $\Zc$ of $\overline{\X}$ is flat over $U_r$.
By blowing-up $\overline{\X}$ along $\Zc$, we have a (smooth) family of Eisenstein $K3$ surfaces $(\X,\sigma)\rightarrow U_r$.

\begin{prop}\label{dominate}
For each $r=2,4,\dots, 18$, we set $a$ be $1+r/2$ when $r\le 10$ and $11-r/2$ when $r\ge 12$.
For each $B\in U_r$, the Eisenstein $K3$ surface associated with $B$ is type $(r,a)$.
Furthermore, the natural map $U_r\rightarrow \M_{r,a}$ is dominant.
\end{prop}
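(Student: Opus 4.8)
The plan is to prove Proposition \ref{dominate} in two parts: first the type computation, then the dominance.

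\textbf{Type computation.} For each $B\in U_r$ I would count the number $n$ of nodes of $B$ and the number $k$ of irreducible components, and apply Proposition \ref{racomp}, which gives $r=2n+2$ and $a=n+4-2k$. For $r\le 10$ the curve $B=B_{3,3}$ is irreducible ($k=1$) with exactly $r/2-1$ nodes, so $r=2(r/2-1)+2$ and $a=(r/2-1)+4-2=1+r/2$, as claimed. For $r\ge 12$ I need to count, for each stratum $U_r$, the nodes arising as intersection points of the prescribed smooth components (each component being smooth, the only nodes of $B$ come from pairwise intersections of distinct components, and condition $(\mathbf B)$ guarantees these are genuine nodes with no triple points). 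For instance for $U_{12}$: $k=2$, and $B_{2,1}\cap B_{1,2}$ consists of $2\cdot2+1\cdot1=5$ points, so $n=5$, giving $r=12$ and $a=5+4-4=5=11-6$. The remaining cases $U_{14},U_{16},U_{18}$ are the same bookkeeping: sum the intersection numbers $B_{a,b}\cdot B_{a',b'}=ab'+a'b$ over all pairs of components to get $n$, read off $k$, and check $r=2n+2$ and $a=n+4-2k=11-r/2$. I would present this as a short table.

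\textbf{Dominance.} Since $\M_{r,a}\subset \B/\Gamma$ is irreducible of dimension $10-r/2$, and the map $U_r\to\M_{r,a}$ has image an irreducible constructible set, it suffices to show the image has dimension $\ge 10-r/2$, equivalently that a general fiber of $U_r\to\M_{r,a}$ has codimension $10-r/2$ in $U_r$, i.e. $\dim U_r - (10-r/2)$ equals the dimension of the group acting. The fibers of $U_r\to\M_{r,a}$ are (up to finite ambiguity from the Torelli theorem and the choice of which lift $\sigma$) orbits of $\mathrm{Aut}(\P^1\times\P^1)$ acting on the linear-system strata $U_r$; this group has dimension $6$ (or $7$ including the swap of factors, but that is finite modulo the connected component). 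So I would compute $\dim U_r$ directly: $U_2=|\O(3,3)|$ has dimension $15$, and imposing a node is one condition each, so $\dim U_r=15-(r/2-1)=16-r/2$ for $r\le 10$; for $r\ge 12$, $\dim U_r$ is the dimension of the space of tuples of curves of the prescribed bidegrees, namely a sum of terms $\dim|\O(a,b)|=(a+1)(b+1)-1$ (e.g. for $U_{12}$, $\dim|\O(2,1)|+\dim|\O(1,2)|=5+5=10$, for $U_{14}$, $5+3+1=9$, etc.), which in each case equals $16-r/2$. Then $\dim U_r - \dim\mathrm{Aut}(\P^1\times\P^1)=16-r/2-6=10-r/2=\dim\M_{r,a}$, so the map is dominant provided the general orbit actually has the expected dimension $6$, i.e. a general $B\in U_r$ has finite stabilizer in $\mathrm{Aut}(\P^1\times\P^1)$. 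That finiteness is clear: a general curve (or general configuration of lines and curves) of the given type is not preserved by any positive-dimensional subgroup.

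\textbf{Main obstacle.} The genuinely delicate point is the dimension count for the dominance: I must make sure $U_r$ is nonempty and that the naive expected dimension $16-r/2$ is actually achieved (that imposing $r/2-1$ nodes on a $(3,3)$ curve really is $r/2-1$ independent conditions, and that the configuration strata for $r\ge 12$ are nonempty of the expected dimension and satisfy $(\mathbf B)$ generically), together with the claim that the general member has finite automorphism stabilizer. Equivalently, one can avoid the stabilizer discussion by exhibiting, for each $r$, a sufficiently large-dimensional subvariety of $U_r$ on which the period map is known to be generically finite — but I expect the cleanest route is the dimension-of-orbit argument above, with the nonemptiness and genericity of $(\mathbf B)$ on each stratum checked by an explicit example.
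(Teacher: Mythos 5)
Your proposal is correct and follows essentially the same route as the paper: the type is computed by counting nodes and components and applying Proposition \ref{racomp}, and dominance is reduced (via Torelli, as in the cited arguments of \cite{MOT} \S 4.3) to checking that $\dim U_r - \dim(\mathrm{PGL}_2)^2 = 16 - r/2 - 6 = 10 - r/2 = \dim \M_{r,a}$, a dimension count the paper leaves implicit and you carry out explicitly. The only quibble is the parenthetical ``dimension $7$ including the swap of factors'': the factor swap is a finite extension and does not change the dimension, which is $6$ as you ultimately use.
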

\begin{proof}
By calculating of the intersection numbers, the number $n$ of nodes of $B\in U_r$ is given by $n=(r-2)/2$.
Then we can check the former statement by the formula in Proposition \ref{racomp}.
For the latter statement, by the similar arguments in \cite{MOT} \S4.3, it is enough to check that the dimensions of $\M_{r,a}$ and $U_r//(\mathrm{PGL}_2)^2$ coincide.
By taking a suitable coordinates, we can check the coincidence of the dimensions by examining equations in each case.
\end{proof}
The case $r=2$ of Proposition \ref{dominate} is proved in \cite{Kon}, and $r=4$ is proved in Example 4.10 in \cite{MOT}.

Let $\hat{U}$ be the locally closed subset of $|\O(3,3)|\times |\O(1,0)|\times |\O(0,1)|$ defined by
\begin{equation}
\hat{U} =\left\{(B,F_1,F_2) : \begin{aligned}
&\text{$B$ satisfies the condition $(\mathbf{B})$, and }\\
&\text{$(B,F_1,F_2)$ satisfies the condition (F1)(F2)(F3).}
\end{aligned}
\right\}.
\end{equation}
We have a natural projection $\hat{U}\rightarrow |\O(3,3)|$.
For each $r=2,4,\cdots, 18$, let $\hat{U}_r$ be the inverse image of $U_r$ by $\hat{U}\rightarrow |\O(3,3)|$.
The morphism $\hat{U}_r\rightarrow U_r$ is finite, \'etale and surjective. 
By the induction arguments in the next section, the following is important.
\begin{prop}\label{closure}
For each $2\le r\le 16$, $\hat{U}_{r+2}$ is contained in the Zariski closure of $\hat{U}_r$.
\end{prop}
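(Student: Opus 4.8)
The plan is to prove the claim by exhibiting, for each $r$ with $2 \le r \le 16$, an explicit degeneration of branch configurations in $U_r$ whose limit lies in $U_{r+2}$, and to check that the extra conditions (F1)(F2)(F3) can be carried along continuously so that points of $\hat U_{r+2}$ arise as limits of points in $\hat U_r$. First I would treat the projection $\hat U_r \to U_r$: since it is finite, \'etale and surjective, the Zariski closure of $\hat U_r$ maps onto the Zariski closure of $U_r$, and a point of $\hat U_{r+2}$ lies in $\overline{\hat U_r}$ as soon as its image $B \in U_{r+2}$ lies in $\overline{U_r}$ and the finite data $(F_1,F_2)$ extends to a nearby point of $\hat U_r$; the latter is automatic because the $F_i$ are cut out by (multiple) roots of the resultants $\mathrm{Res}_y(h, \partial h/\partial y)$ and $\mathrm{Res}_x(h,\partial h/\partial x)$, which vary algebraically with $B$, and a multiple root of the limiting resultant is a limit of roots of the nearby resultant. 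So the essential content is the inclusion $U_{r+2} \subset \overline{U_r}$ inside $|\O(3,3)|$.

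Next I would verify $U_{r+2}\subset\overline{U_r}$ case by case, using the explicit descriptions of the $U_r$ in Section 5. For $r = 2,4,6,8$ this is the statement that a $(3,3)$-curve with $m+1$ nodes is a limit of $(3,3)$-curves with $m$ nodes, which follows from the fact that the Severi-type locus of $\delta$-nodal curves in $|\O(3,3)|$ has the expected codimension $\delta$ and the closure of the $\delta$-nodal locus contains the $(\delta+1)$-nodal locus (smoothing one node at a time). For the transitions into reducible configurations ($r=10 \to 12$, $12\to 14$, $14\to 16$, $16\to 18$) the point is that each reducible branch curve in $U_{r+2}$ — say $B_{2,1}+B_{1,2}$ for $r=12$ — is a limit of irreducible nodal curves in $U_r$: an irreducible $(3,3)$-curve acquires nodes as two of its components come together, and the number of nodes of the reducible limit (nodes of the pieces plus the intersection points of distinct components) matches the count $n = (r-2)/2$ from Proposition \ref{dominate}, so the limiting curve, once we pass to a component of the $n$-nodal locus, lies in $\overline{U_r}$. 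Concretely I would write down a one-parameter family $B_t$, e.g. $B_t = \{ g \cdot g' + t\, h = 0\}$ with $g,g'$ the equations of the two limiting components and $h$ generic, check that for $t\neq 0$ the curve $B_t$ is irreducible with exactly $(r-2)/2$ nodes and satisfies $(\mathbf B)$, hence lies in $U_r$, and that $B_0$ is the prescribed reducible member of $U_{r+2}$.

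The main obstacle I anticipate is bookkeeping in the reducible cases: one must make sure that along the degeneration no extra singularities appear (so condition $(\mathbf B)$ is preserved for $t \neq 0$), that the nodes do not collide or worsen, and that the generic curve in the chosen family genuinely has the node count corresponding to $U_r$ rather than fewer — i.e. that the family is transverse to the relevant Severi stratum. This requires a short local computation at each of the intersection points of the limiting components and at each node of the pieces, checking that the family $B_t$ gives a versal smoothing of each node. A secondary, more cosmetic point is compatibility with the tangent/node-passing dichotomy of the $F_i$: when a node-passing $F_i$ in $\hat U_{r+2}$ arises, the corresponding double root of the resultant for $B_t$ must persist as $t\to 0$, which again follows from continuity of roots but should be stated. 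Once these local checks are in place, the inclusion $\hat U_{r+2}\subset\overline{\hat U_r}$ follows by combining the surjectivity and finiteness of $\hat U_r\to U_r$ with $U_{r+2}\subset\overline{U_r}$.
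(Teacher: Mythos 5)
Your overall architecture agrees with the paper's: reduce to the inclusion $U_{r+2}\subset\overline{U_r}$ (the lifting of the $(F_1,F_2)$ data along the finite \'etale cover $\hat{U}_r\to U_r$ is indeed routine, and the paper does not even spell it out), and dispose of the transitions between the irreducible strata $U_2,\dots,U_{10}$ by the standard fact that the $(\delta+1)$-nodal locus lies in the closure of the $\delta$-nodal locus. The transitions $12\to 14$, $14\to 16$, $16\to 18$ are also easy, though not for the reason you give: $U_{12},U_{14},U_{16}$ consist of \emph{reducible} curves, not irreducible ones, and there one simply smooths one reducible piece to an irreducible curve of the combined bidegree inside its own linear system while keeping the remaining components fixed. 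The paper identifies $U_{12}\subset\overline{U_{10}}$ as the only non-trivial case, and that is exactly where your proposal breaks down.

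The gap is this: for $B_0=B_{2,1}+B_{1,2}=\{gg'=0\}$, which has $5$ nodes, the family $B_t=\{gg'+th=0\}$ with $h$ a \emph{generic} section of $\O(3,3)$ smooths \emph{all five} nodes simultaneously (near a node, $gg'=xy$ in local coordinates and $xy+th(p)+\cdots=0$ is smooth when $h(p)\neq 0$), so for small $t\neq 0$ the curve $B_t$ is a smooth member of $U_2$, not a $4$-nodal member of $U_{10}$. Your proposed safeguard — checking that $B_t$ is ``a versal smoothing of each node'' — is precisely the opposite of what is needed: versality means the five nodes are smoothed independently, which forces the generic member to have no nodes at all. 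What the argument actually requires is to constrain $h$ to the linear subsystem of $(3,3)$-curves singular at four of the five nodes, and then to prove that a general member of that subsystem is irreducible with exactly those four nodes and no other singularities; this genericity statement is the real content, and your plan contains no mechanism for it. The paper's device is an elementary transformation of $\P^1\times\P^1$ centered at the fifth node, which turns $B_0$ into a pair of transverse conics in $\P^2$ and reduces the claim to the classical fact that two transverse conics deform, inside the system of quartics singular at three of their four intersection points, to irreducible $3$-nodal quartics. Without either that reduction or a direct analysis of the constrained linear system on $\P^1\times\P^1$, the induction step from $r=12$ down to $r=10$ does not go through.
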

\begin{proof}
It is enough to check that $U_{r+2}$ is contained in the Zariski closure of $U_r$.
Only $U_{12}\subset \overline{U_{10}}$ is the non-trivial case, so we prove this.
Let $B\in U_{12}$. 
By blowing up along a node of $B$ and contracting two fibers passing through the node, $B$ can be transformed to a union of 2 smooth conics $B_1$ and $B_2$ in $\P^2$ intersecting transversally.
Thus it is enough to check that $B$ can be deformed into quartics with 3 nodes.
This can be checked by fixing 3 of intersection points of $B_1$ and $B_2$ at $[1:0:0], [0:1:0],[0:0:1]$ in $\P^2$.
\end{proof}

Let $\X\rightarrow \hat{U}_r$ be the base change of $\X\rightarrow U_r$ by $\hat{U}_r\rightarrow U_r$.
For each point $u\in \hat{U}_r$, by the construction in \S4.2, we have a subset $\widetilde{\Xi}_u \subset \CH^2(\hat{\X}_u, 1)$ and $\Xi_u \subset J(\X_u,\sigma)$.
Let $\Xi = \{\Xi_u\}_{u\in \hat{U}_r} \subset \J(\X,\sigma)$.
Then $\Xi$ is a degree 3 holomorphic multisection of  $\J(\X,\sigma)\rightarrow \hat{U}_r$ in the following sense.
This enables us to consider the degeneration of $\Xi$.
As we will see in Section 7, $\Xi$ splits in some case, but we do not know whether $\Xi$ splits in general. 

\begin{prop}\label{multisection}
For each $u\in \hat{U}_r$, there exits an open neighborhood $V$ (in the sense of the classical topology) of $u$ and holomorphic sections $\nu_i:V\rightarrow \J(\X,\sigma)|_V$ for $i=0,1,2$ such that $\Xi|_V=\{\nu_0,\nu_1,\nu_2\}$.
\end{prop}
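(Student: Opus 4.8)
The plan is to produce the local sections $\nu_i$ by locally resolving the three-to-one multivaluedness coming from the choice of point $p_0 \in \pi^{-1}(q_{12})$, and then to check that each branch is holomorphic by realizing it as the normal function attached to an analytic family of $(2,1)$-cycles (after the blow-up $b$ to remove cusps). First I would fix $u = (B, F_1, F_2) \in \hat U_r$ and recall that $q_{12}$ is the unique intersection point of $F_1$ and $F_2$; by condition (F3) we have $q_{12} \notin B$, so $\pi^{-1}(q_{12})$ consists of three distinct points which are permuted cyclically by $\sigma$. Since $\hat U_r \to U_r$ is finite étale and $q_{12}$ depends algebraically (in fact, as a regular function on a suitable étale cover) on $(F_1,F_2)$, the fibre product $\mathcal P \to \hat U_r$ parametrizing $(u, p)$ with $p \in \pi^{-1}((q_{12})_u)$ is a finite étale cover of degree $3$. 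Over a small classical neighborhood $V$ of $u$ this cover splits into three disjoint sections $V \to \mathcal P$, $u' \mapsto p_0(u'), p_1(u') = \sigma(p_0(u')), p_2(u') = \sigma(p_1(u'))$, and I would use these to build the three cycles.

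Next I would upgrade each branch to an analytic family of $(2,1)$-cycles. For each choice of basepoint $p_j(u')$ one gets, as in Section 4.2, rational functions $f_1 \in \C(C_1)^\times$, $f_2 \in \C(C_2)^\times$ with $\div_{C_1}(f_1) = -\div_{C_2}(f_2) = \sigma(p_j) - p_j$; these can be chosen to vary holomorphically over $V$ once the divisor points do, since $C_1, C_2$ are rational curves varying in a family and a rational function on $\P^1$ with prescribed simple zero and pole is determined up to a scalar which may be normalized continuously. After the fibrewise blow-up $b: \widetilde{\X} \to \X$ along the cusps $\hat q_i$ (again these loci are smooth over $\hat U_r$, being determined by the resultant conditions, so the blow-up is a family), the curves $\widetilde C_1, \widetilde C_2$ are smooth over $V$ and their intersection points with the relevant sections are smooth over $V$; thus the data $(\widetilde C_i, b^* f_i)$ satisfy exactly the hypotheses of the definition of an analytic family of $(2,1)$-cycles in Section 2.4. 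Hence $u' \mapsto \nu(b^*(\text{cycle}_j)_{u'})$ is a normal function, in particular a holomorphic section of $\J(\widetilde\X) \to V$, and composing with the holomorphic maps $\J(\widetilde\X) \to \J(\X) \to \J(\X,\sigma)$ (the last from (\ref{surjJac})) gives a holomorphic section $\nu_j: V \to \J(\X,\sigma)|_V$.

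It remains to check that $\{\nu_0, \nu_1, \nu_2\}$ is exactly $\Xi|_V$ and that the labelling is consistent, i.e.\ independent of the scalar ambiguities. By the argument already given in the definition of $\xi$, changing $f_i$ by a constant changes $b^*(\xi)$ by a decomposable cycle supported on $[C_1],[C_2] \in L(\X_{u'},\sigma_{u'})$, which is killed by $\nu_E$ and hence by the composite to $\J(\X,\sigma)$; so $\nu_j(u')$ depends only on $p_j(u')$, and equals $\nu_E$ of the $j$-th element of $\widetilde\Xi_{u'}$. Since the three basepoints $p_0, p_1, p_2$ are obtained from one another by $\sigma$ and $\sigma_*$ cyclically permutes $\widetilde\Xi$, we get $\{\nu_0(u'),\nu_1(u'),\nu_2(u')\} = \nu_E(\widetilde\Xi_{u'}) = \Xi_{u'}$ for all $u' \in V$, which is the claim. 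I expect the main obstacle to be the second step: verifying carefully that after the blow-up the divisors $\widetilde C_i$ and the zero/pole loci of $b^* f_i$ are genuinely smooth over $\hat U_r$ (so that Proposition 4.1 of \cite{CL} and Remark 2.1 of \cite{CDKL16} apply to give holomorphy and horizontality), since this requires controlling how the cusps $\hat q_i$ and the tangency/node-passing type of $F_i$ behave in the family — though on a suitable Zariski (hence classical) neighborhood the type is locally constant, so this is a matter of bookkeeping rather than a genuine difficulty.
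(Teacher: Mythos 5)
Your proposal is correct and follows essentially the same route as the paper: split the $\Z/3\Z$-torsor of basepoints $\Cc_1\cap\Cc_2$ locally into three sections, blow up along the cusp sections when an $F_i$ is of tangent type so that the curves become smooth families, realize each branch as an analytic family of $(2,1)$-cycles whose normal function is holomorphic, and push forward to $\J(\X,\sigma)$, where the scalar ambiguity in the $f_i$ dies because $[C_i]\in L(\X_u,\sigma_u)$. The only cosmetic difference is that the paper builds one family $\xi$ and takes $\sigma_*^i\xi$ rather than three families indexed by the basepoints, and it justifies the existence of the global meromorphic functions $f_i$ by observing that the $\Cc_i|_V$ are trivial $\P^1$-bundles.
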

\begin{proof}
Let $u\in \hat{U}_r$.
Let $\Y = \P^1\times \P^1\times \hat{U}_r$.
For $i=1,2$, let $\mathcal F_i\subset \Y$ be the family of curves corresponding to $F_i$ and $\Cc_i\subset \X$ be the strict transformation of the pull-back of $\mathcal F_i$ by $\overline{\X}\rightarrow \Y$.
Then $\Cc_i$ be a flat family of rational curves over $\hat{U}_r$, and $\Cc_1\cap \Cc_2$ is a $\Z/3\Z$-torsor over $\hat{U}_r$ in the \'etale topology, hence there exists an open neighborhood $V$ of $u$ in the sense of classical topology such that we can take sections $p_0,p_1,p_2: V\rightarrow\Cc_1\cap \Cc_2$.

First, we consider the case when both of $(\mathcal F_1)_u$ and $(\mathcal F_2)_u$ are node-passing types for each $v\in V$.
Since each $B\in U_r$ has same number of nodes, both of $(\mathcal F_1)_v$ and $(\mathcal F_2)_v$ are also node-passing types for all $v\in V$.
Then both of $(\Cc_1)_v$ and $(\Cc_2)_v$ are smooth, so $\Cc_1|_V$ and $\Cc_2|_V$ are smooth family of rational curves over $V$ with 3 sections $p_0,p_1,p_2$.
Thus $\Cc_1|_V$ and $\Cc_2|_V$ are trivial $\P^1$-bundles over $V$, so we can take a meromorphic function $f_{1}$ (resp. $f_{2}$) on $\Cc_1|_V$ (resp. $\Cc_2|_V$) such that $\div_{(\Cc_1)_v}((f_{1})_v)=-\div_{(\Cc_2)_v}((f_{2})) = (p_0)_v-(p_1)_v$.
Then we can construct an analytic family of $(2,1)$-cycle $\xi$ on $V$.
The sections $\nu_i=\nu_E(\sigma_*^i\xi)\:\:(i=0,1,2)$ satisfy the condition.

Next, we consider the case when either $(\mathcal F_1)_u$ or $(\mathcal F_2)_u$ is a tangent type.
Let $I=\{i\in\{1,2\}: \mathcal (F_i)_u\text{ is a tangent type.}\}$.
Then for all $v\in V$ and $i\in I$, $(\mathcal F_i)_v$ is a tangent type.
For $i\in I$, by shrinking $V$ if necessary, we may assume that we have a section $q_i:V\rightarrow {\mathcal F}_i|_V$ such that for each $v\in V$, $q_i(v)$ is the point on which $({\mathcal F}_i)_v$ intersects with the branching locus with multiplicity 2.
Let $\hat{q}_i: V\rightarrow \Cc_i$ be the unique lift of $q_i$.
Let $b:\widetilde{\X}\rightarrow \X|_V$ be the blowing-up along $\bigcup_{i\in I} \hat{q}_i(V)$ and $\widetilde{\Cc}_i$ be the strict transformation of $\Cc_i$ for $i=1,2$.
Then $\widetilde{\Cc}_1$ and $\widetilde{\Cc}_1$ are smooth family of rational curves over $V$ with 3 sections.
Thus we can construct an analytic family of $(2,1)$-cycle $\xi$ on $\widetilde{\X}\rightarrow V$.
The section $\nu_i=b_*(\nu_E(\sigma_*^i\xi))\:\:(i=0,1,2)$ satisfy the condition.
\end{proof}

\section{The main theorem}
In this section, we state the main theorem and prepare the induction steps which reduces the proof for the general case to the starting case $r=18$.
The starting case $r=18$ will be proved in the next section.
\subsection{A degeneration lemma}
As in \cite{MS23}, our induction argument is based on the following degeneration lemma.
The proof is similar to \cite{MS23} Lemma 5.2, but in this paper, we consider somewhat more complicated degenerations, so we need more specific examination on the simultaneous resolution of $A_2$-singularities.

\begin{lem}\label{degenerationlemma}
Let $\Delta$ be the unit disk and $\Delta^*=\Delta-\{0\}$.
Assume a holomorphic map $\rho:\Delta \rightarrow |\O(3,3)|\times |\O(1,0)|\times |\O(0,1)|$ satisfying the following conditions is given.
\begin{enumerate}
\renewcommand{\theenumi}{\roman{enumi}}
\item For each $t\in \Delta$, $\rho(t)=(B(t),F_1(t),F_2(t))\in \hat{U}$.
\item There exists $r$ such that $\rho(\Delta^*)\subset \hat{U}_r$.
\end{enumerate}
By the condition (i), we have the Eisenstein $K3$ surface $(X_t,\sigma)$ associated with $B(t)$ and $\Xi_{t}\subset J(X_{t},\sigma)$ for each $t\in \Delta$.
Suppose that $\Xi_{0}$ is non-torsion\footnote{
A subset $A$ of an abelian group is called \text{non-torsion} if $m\cdot A\neq \{0\}$ for any positive integer $m$.}.
Then for a very general  $t\in \Delta$, $\Xi_{t}$ is non-torsion.
\end{lem}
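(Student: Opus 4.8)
The plan is to promote $\Xi$ to an honest analytic family near the central fiber and then use the rigidity of torsion sections of a family of generalized complex tori together with the semicontinuity of "being torsion" along a degenerating family. First I would, after a finite base change on $\Delta$ (which does not affect the conclusion since "very general" is stable under such changes), lift the $\mathbb{Z}/3\mathbb{Z}$-torsor $\mathcal{C}_1\cap\mathcal{C}_2$ over $\Delta$ to a trivial one, so that the three points $p_0,p_1,p_2$ are available as sections; as in the proof of Proposition \ref{multisection}, after a further blow-up $b:\widetilde{\X}\to\X$ along the cusp sections $\hat q_i$ (needed exactly where $F_i(t)$ is of tangent type) we obtain a smooth family $\widetilde{\X}\to\Delta$ with flat families of smooth rational curves $\widetilde{\Cc}_1,\widetilde{\Cc}_2$ carrying the three sections. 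This produces an analytic family of $(2,1)$-cycles $\xi$ over all of $\Delta$ (not just $\Delta^*$), hence a normal function $\nu_E(\sigma_*^i\xi):\Delta\to\J(\X,\sigma)$ for $i=0,1,2$ restricting to a branch of $\Xi$, and in particular $\Xi$ itself is a holomorphic (multi)section of $\J(\X,\sigma)\to\Delta$ whose value at $0$ is $\Xi_0$.

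Next I would analyze the limit Hodge structure. Over $\Delta^*$ the family $(\X_t,\sigma)$ is of type $(r,a)$ and $\mathcal{E}(\X,\sigma)^\vee$ is a variation of Hodge structure of weight $2$; the key point is that, because the number of nodes of $B(t)$ is constant on $\Delta^*$ and the $A_2$-singularities admit simultaneous resolution (this is where the "more specific examination" alluded to in the paper is required — one must check that the blow-up construction of \S5 extends over the puncture so that the monodromy on $\mathcal{E}(\X,\sigma)^\vee$ is quasi-unipotent and in fact, after the base change, unipotent, with the limit mixed Hodge structure controlling $J$). One then has a continuous extension of the family of generalized tori $\J(\X,\sigma)|_{\Delta^*}$ whose fiber at $0$ maps naturally to $J(X_0,\sigma)$, compatibly with the section $\Xi$; this is the assertion that $\J(\X,\sigma)$ "behaves continuously" emphasized in the introduction. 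The concrete input is that the rational curves $C_1,C_2$ and the divisor relation (\ref{divrelation}) all specialize well, so $\Xi_0$ is genuinely the limit of $\Xi_t$.

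Finally, suppose for contradiction that $\Xi_t$ is torsion for $t$ in a non-meagre (non-very-general) subset of $\Delta$, i.e.\ for each fixed $m$ the locus $\{t: m\cdot\Xi_t=0\}$ is not contained in a countable union of proper analytic subsets for at least one $m$; since each such locus is analytic (the zero locus of the holomorphic section $m\cdot\nu_i$ of a family of complex tori), it must then be all of $\Delta$ for that $m$. But $m\cdot\nu_i\equiv 0$ on $\Delta$ forces $m\cdot\Xi_0=0$, contradicting the hypothesis that $\Xi_0$ is non-torsion. Hence for every $m$ the locus $\{t: m\cdot\Xi_t=0\}$ is a proper analytic subset, so its complement is the complement of a countable union of proper analytic subsets, and for a very general $t\in\Delta$ we have $m\cdot\Xi_t\neq 0$ for all $m$, i.e.\ $\Xi_t$ is non-torsion. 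The main obstacle is the middle step: showing that after base change the family of Eisenstein Jacobians extends continuously across $t=0$ with $\Xi_0$ as its limiting value, which rests on a careful treatment of the simultaneous resolution of the $A_2$-singularities of $\overline{\X}_0$ and the resulting unipotence of the local monodromy on $\mathcal{E}(\X,\sigma)^\vee$; granting that, the torsion-rigidity argument is formal.
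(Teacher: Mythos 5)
Your overall strategy is the same as the paper's: extend the family of cycles across $t=0$, obtain a holomorphic (multi)section of a family of Jacobians over all of $\Delta$, and conclude by rigidity of torsion sections from the non-torsion value at the centre. The final rigidity step is essentially correct (with the small caveat that the locus $\{t:\ m\cdot\Xi_t=0\}$ is a \emph{countable union} of analytic subsets rather than a single one, since the lattice has countably many flat local sections; the argument survives unchanged). However, there are two genuine gaps in the middle step, and you have only flagged the second one rather than closed it.

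First, the assertion that $\J(\X,\sigma)|_{\Delta^*}$ ``extends continuously across $t=0$ with fiber mapping naturally to $J(X_0,\sigma)$'' is not something you can get from unipotence of monodromy or a limit mixed Hodge structure. After the finite base changes and the simultaneous resolution of the extra $A_2$-points of $\overline{\X}'_0$, the total family $p:\X\to\Delta$ is \emph{smooth}, so there is no monodromy problem at all on $R^2p_*\Z_\X$; the real issue is that $B(0)$ generally has \emph{more} nodes than $B(t)$ for $t\neq 0$, so $L(X_0,\sigma)$ jumps up and $E(X_0,\sigma)$ jumps down in rank. Consequently the fibrewise Eisenstein Jacobians do not form a VHS-Jacobian over all of $\Delta$ whose central fibre is $J(X_0,\sigma)$. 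The paper's fix is to introduce the sub-VHS $\L\subset R^2p_*\Z_\X$ generated only by the classes that persist over $\Delta^*$ (the exceptional curves over the sections $z_i$ and $\pi^*\NS(\P^1\times\P^1)$), set $\J(\E)=\J(\mathrm{Coker}(\L\to R^2p_*\Z_\X))$, and use that $\J(\E)_t=J(X_{t^N},\sigma)$ for $t\neq 0$ while at $t=0$ one only has a surjection $\J(\E)_0\twoheadrightarrow J(X_0,\sigma)$ --- which is all the contradiction argument needs. Your write-up needs this intermediate object; without it the section $\nu$ has no well-defined target at $t=0$. Second, the construction of the analytic family of cycles over all of $\Delta$ is not merely a matter of blowing up the cusp sections where $F_i(t)$ is of tangent type: the delicate case is when $F_i(t)$ is of tangent type for $t\neq 0$ but $F_i(0)$ is of node-passing type, i.e.\ the tangency point $q_i(t)$ collides at $t=0$ with one of the nodes that exist only in the central fibre. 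Then the section of cusps $\hat q_i$ limits into the exceptional locus of the simultaneous resolution, and one must verify using Tyurina's explicit local equations (after a further cyclic base change) that $\hat q_i$ extends holomorphically to $t=0$. You correctly identify this as ``the main obstacle,'' but granting it is precisely granting the technical content of the lemma, so as written the proof is incomplete at the point the paper identifies as its novelty.
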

\begin{proof}
First, we construct a family of $K3$ surfaces $\X\rightarrow \Delta$ such that each fiber corresponding to $X_t$ in the statement of this lemma.
Let $\Y=\P^1\times \P^1\times \Delta$ and $s\in \Gamma(\Y,\O_{\Y}(3,3))$ such that $\div_{\Y_t}(s_t)=B(t)$ for each $t\in \Delta$.
Then let $\pi_1: \overline{\X}'\rightarrow \Y$ be the triple covering associated with $(\O_\Y(-1,-1),s)$.
By the condition (ii), the number of nodes of $B(t)$ are constant on $t\in \Delta^*$.
Then by taking the cyclic base change $\Delta \rightarrow \Delta; t\mapsto t^N$ for some $N$ if necessary (to annihilate the monodromy), we have sections $z_1,z_2,\dots, z_n:\Delta \rightarrow \Y$ such that $z_1(t),z_2(t),\dots,z_n(t)$ corresponds to the nodes of $B(t)$ for $t\in \Delta^*$.
Note that $z_1(0),z_2(0),\dots, z_n(0)$ are also nodes of $B(0)$, but $B(0)$ may have other nodes $x_1,\dots, x_m$.

Let $\hat{z}_i:\Delta \rightarrow \overline{\X}'$ be the unique lift of $z_i$.
Let $\X'\rightarrow \overline{\X}'$ be the blowing up along $\bigcup_{i=1}^n\hat{z}_i(\Delta)$.
By taking the cyclic base change again, we may assume that for $i=1,2,\dots, n$, we have the families $\mathcal E_{i,1}$ and $\mathcal E_{i,2}$ of exceptional curves over $\hat{z}_i(t)$.

Let $\hat{x}_1,\hat{x}_2,\dots, \hat{x}_m$ be the points of the fiber $\X'_0$ which is the unique lifts of the nodes $x_1,\dots, x_m\in B(0)$.
Then $\hat{x}_1,\hat{x}_2,\dots, \hat{x}_m$ is the $A_2$ singular points of $\X'_0$.
By the result of \cite{Tju}, if we take the cyclic base change, we have the simultaneous minimal resolution $\X\rightarrow \X'$.
Then the resulting the family $p:\X\rightarrow \Delta$ is a smooth family of $K3$ surfaces.
The families constructed above are summarized in the following diagram.
\begin{equation}
\begin{tikzcd}
\X \arrow[r]\arrow[d,"p"']  
& \X' \arrow[r]\arrow[d] & \overline{\X}'\arrow[r,"\pi_1"]\arrow[d] &\Y \arrow[r, equal]\arrow[d] &[-16pt] \P^1\times \P^1\times \Delta \\
\Delta \arrow[r] & \Delta \arrow[r]  & \Delta \arrow[r,equal] & \Delta \\[-15pt]
t \aru \arrow[rrr,mapsto] &&& t^N \aru
\end{tikzcd}
\end{equation}
By the construction, for $t\in \Delta$, the fiber $\X_t$ corresponds to the Eisenstein $K3$ surface $(X_{t^N},\sigma)$ appearing in the statement of this lemma.

Let $\L\subset R^2p_*\Z_{\X}$ be the sub VHS generated by families of curves $\E_{i,j}\:\:(i\in \{1,2,\dots, n\}, j\in \{1,2\})$ and the pull-backs of generators of $\NS(\P^1\times \P^1)$ by the natural map $\X\rightarrow \Y\rightarrow \P^1\times \P^1$.
For each $t\in \Delta^*$, the $\Q$-linear space $L(X_{t^N},\sigma)_\Q$ is generated by $\pi^*\NS(\P^1\times \P^1)$ and exceptional curves over the nodes $z_i(t)$ by Proposition \ref{genL}, so we have 
\begin{equation}\label{LLcor}
(\L_{t})\otimes \Q = L(\X_{t},\sigma)\otimes \Q = L(X_{t^N},\sigma)\otimes \Q.
\end{equation}
Let $\J(\E)$ be the Jacobian associated with the variation of Hodge structure $\mathrm{Coker} (\L\rightarrow R^2p_*\Z_{\X})$.
We have the natural surjection from the family of Jacobians $\J(\X)$ to $\J(\E)$.
By (\ref{LLcor}), for each $t\in \Delta^*$, we have $\J(\E)_{t} = J(X_{t^N},\sigma)$.
Furthermore, since $\L_0 \subset L(X_0,\sigma)$ by Proposition \ref{genL}, we have a natural surjection 
\begin{equation}\label{surjspecial}
\J(\E)_0\twoheadrightarrow J(X_0,\sigma).
\end{equation}

Finally, we will construct a family of a holomorphic section $\nu: \Delta \rightarrow \J(\E)$ satisfying the following conditions.
\begin{enumerate}
\renewcommand{\theenumi}{\alph{enumi}}
\item For $t\in \Delta^*$, under the identification $\J(\E)_{t} = J(X_{t^N},\sigma)$, $\Xi_{t^N}\subset J(X_{t^N},\sigma)$ coincides with the set $\{\nu(t),\sigma_*(\nu(t)), \sigma^2_*(\nu(t))\}$.
\item $\Xi_0\subset J(X_0,\sigma)$ coincides with the set $\{\overline{\nu(0)}, \sigma_*(\overline{\nu(0)}), \sigma^2_*(\overline{\nu(0)})\}$ where $\overline{\nu(0)}$ is the image of $\nu(0)$ under the surjection (\ref{surjspecial}).
\end{enumerate}
If we can show the existence of such $\nu$, by the property (b) and the assumption $\Xi_0$ is non-torsion, $\nu$ is non-torsion section.
Thus $\nu(t)$ is non-torsion for a very general $t\in \Delta$, so by the property (a), we can show that $\Xi_t$ is non-torsion for a very general $t\in\Delta^*$.

For $i = 1,2$, let $\mathcal F_i = \{F_i(t)\}_{t\in \Delta}$ be the family of rational curves on $\Delta$ and $q_i:\Delta \rightarrow \Y$ be the section such that $B(t)$ and $F_i(t)$ intersect at $q_i(t)$ by multiplicity $2$.
Let $q'_i:\Delta \rightarrow \overline{\X}'$ be the unique lift of $q_i$.
For $i=1,2$, we define a closed subvariety $\mathcal{W}_i$ of $\X$ as follows.

\begin{figure}

\includegraphics[width = 110mm]{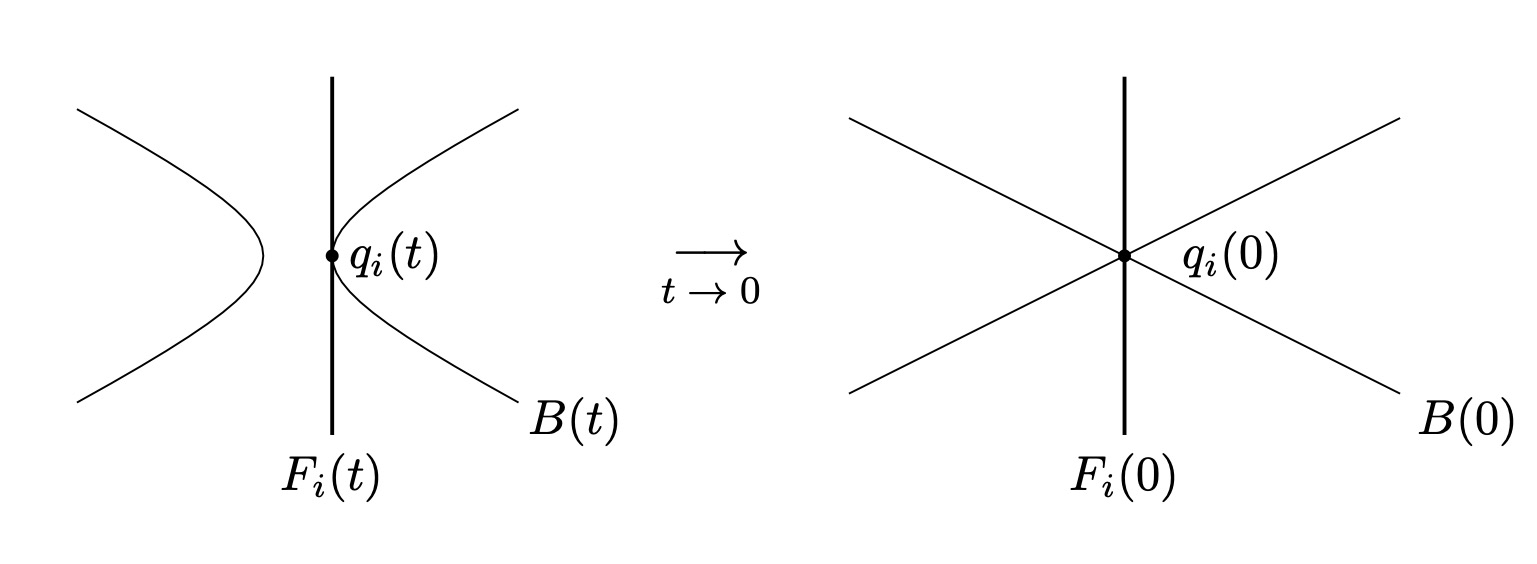}
\caption{Figure for Case 3}
\end{figure}

\noindent
\underline{Case 1.} $F_i(0)$ is a tangent type.\par
In this case, $F_i(t)$ is a tangent type for all $t\in \Delta$.
Since $q'_i(t)$ does not intersect with the blowing-up locus for each $t \in \Delta$, we have the unique lift $\hat{q}_i:\Delta\rightarrow \X$ of $q'_i$, and we set $\mathcal{W}_i = \hat{q}_i(\Delta)$.

\noindent
\underline{Case 2.} $F_i(t)$ is a node-passing type for some $t\in \Delta^*$.\par
Then $q_i$ corresponds to either $z_1,z_2,\dots, z_n$ above and we set $\mathcal{W}_i = \emptyset$.

\noindent
\underline{Case 3.} $F_i(0)$ is a node-passing type and $F_i(t)$ is a tangent type for $t\in \Delta^*$.\par
We will construct a lift $\hat{q}_i:\Delta\rightarrow \X$ of $q'_i$ and we set $\mathcal{W}_i= \hat{q}_i(\Delta)$.
Let $\mathcal{V}\rightarrow \Delta$ be a sufficiently small neighborhood (in the sense of classical topology) of $q_i:\Delta \rightarrow \Y$.
Then we may assume that we have a biholomorphic map $\mathcal{V}\simeq \Delta^2\times \Delta$ over $\Delta$ inducing the following isomorphism.
\begin{equation}
\begin{aligned}
&B(t)\cap \mathcal{V} \simeq \{(x,y,t)\in \Delta^2\times \Delta: xy+t^2=0\},\\
&F_i(t) \cap \mathcal{V} \simeq \{(x,y,t)\in \Delta^2\times \Delta: y=x+2t\}
\end{aligned}
\end{equation}
Under these isomorphism, $q_i(t)$ corresponds to the point $(x,y,t)=(-t,t,t)$.
Let $\mathcal{U}'$ (resp. $\mathcal{U}$) be the pull-back of $\mathcal{V}$ by $\X'\rightarrow \Y$ (resp. $\X\rightarrow \Y$).
Then $\mathcal{U}'$ is isomorphic to
\begin{equation}
\{(z,x,y,t)\in \C\times \Delta^2\times \Delta: z^3=xy+t^{2N}\}
\end{equation}
over $\Delta$ for some $N$.
By the explicit form of simultaneous resolution given in \cite{Tju} \S3, after a cyclic base change $\Delta\rightarrow \Delta; t\mapsto t^3$, $\mathcal{U}$ is given by the graph of the following meromophic map.
\begin{equation}
\begin{tikzcd}
\mathcal{U}' \arrow[r, dashrightarrow] & \P^1\times \P^1 &[-16pt]\\[-10pt]
(z,x,y,t) \aru \arrow[r,mapsto] &  ([w_{00}:w_{01}], [w_{10}:w_{11}]) \aru \arrow[r,equal] &
([x:z-t^{2N}],[x:(z-t^{2N})(z-\zeta t^{2N})])
\end{tikzcd}
\end{equation}
Then the holomorphic map $\hat{q}_i: \Delta \rightarrow \mathcal{U}; t\mapsto (z,x,y,t,[w_{00}:w_{01}], [w_{10}:w_{11}]) = (0,-t^{3N},t^{3N},[t^N:1],[1:-\zeta t^N])$ is the lift of $q'_i$, so we have a desired section and finish Case 3.

Let $b:\widetilde{\X}\rightarrow \X$ be the blowing-up along $\mathcal{W}_1\cup \mathcal{W}_2$ and $\widetilde{\Cc}_i \subset \widetilde{\X}$ be the strict transformation of $\pi_1^{-1}(\mathcal{F}_i)$ by $\widetilde{\X}\xrightarrow{b}\X\rightarrow \overline{\X}'$.
Then by the construction, $\widetilde{\Cc}_1$ and $\widetilde{\Cc}_2$ are smooth families of rational curves over $\Delta$.
Since $\widetilde{\Cc}_1\cap \widetilde{\Cc}_2$ is a $\Z/3\Z$-torsor over $\Delta$, we can construct an analytic family of higher Chow cycles $\xi_0$ on $\widetilde{\X}$ as in Proposition \ref{multisection}.
Let $\nu$ be the image of $\nu(\xi_0)$ under the surjection $\J(\widetilde{\X})\rightarrow \J(\X)\rightarrow \J(\E)$.
Since $b_*(\nu(\xi_0)_t)$ coincides with a $(2,1)$-cycle in $\widetilde{\Xi}_t$ for $t\in \Delta$, the section $\nu$ satisfies the property (a) and (b).
\end{proof}

\subsection{The main theorem}
Let $r$ be the even number satisfying $2\le r\le 18$.
Let $(\X,\sigma) \rightarrow \hat{U}_r$ be the family of Eisenstein $K3$ surfaces constructed in Section 5.
For each $u \in \hat{U}_r$, we have $\widetilde{\Xi}_u\subset \CH^2(\X_u,1)$ and $\Xi_u\subset J(\X_u,\sigma)$ constructed in Section 4.2.
Then the main theorem is as follows.
\begin{thm}\label{mainthm}
For a very general $u\in U_r$, the image of $\widetilde{\Xi}_u$ generates a rank 2 subgroup in $\CH^2(\X_u,1)_\ind$.
\end{thm}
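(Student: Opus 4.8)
The proof proceeds by induction on $r$, descending from $r=18$ down to $r=2$. The base case $r=18$ is the content of Section 7, where the regulator map is computed explicitly: there one shows that the normal function $G(\lambda)$ attached to $\xi$ satisfies the inhomogeneous hypergeometric equation \eqref{PFintro}, whose associated homogeneous equation is the Picard--Fuchs equation for the period of the family, and hence $G$ is not a $\C$-linear combination of periods. This forces $\nu_E(\xi_u)$ to be non-torsion for very general $u$. One must additionally check that $\nu_E(\xi_u)$ and $\nu_E(\sigma_*\xi_u)$ are linearly independent: since $\sigma_*$ acts on $J(X,\sigma)$ through multiplication by $\zeta$ (via the $\Z[\zeta]$-structure), a nontrivial $\Z$-linear relation $m\,\nu_E(\xi_u) + n\,\nu_E(\sigma_*\xi_u) = 0$ would give $(m + n\zeta)\nu_E(\xi_u) = 0$; since $m+n\zeta \neq 0$ in $\Z[\zeta]$ unless $m=n=0$, and since $J(X,\sigma)$ has no $\Z[\zeta]$-torsion in the relevant range for very general $u$ (the Hodge structure $E(X,\sigma)$ has no proper sub-Hodge structures), the relation is trivial. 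Thus for very general $u\in \hat U_{18}$ the set $\widetilde\Xi_u$ generates a rank $2$ subgroup of $\CH^2(\X_u,1)_{\mathrm{ind}}$, where we use Proposition \ref{nonzeroprop} to identify $\nu_E$ with a map to the indecomposable part.

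For the inductive step, suppose the statement holds for $r+2$ (with $2\le r\le 16$); I want to deduce it for $r$. By Proposition \ref{closure}, $\hat U_{r+2}$ lies in the Zariski closure of $\hat U_r$, so one can choose a holomorphic arc $\rho:\Delta\to |\O(3,3)|\times|\O(1,0)|\times|\O(0,1)|$ with $\rho(\Delta^*)\subset \hat U_r$ and $\rho(0)\in \hat U_{r+2}$, satisfying hypotheses (i) and (ii) of Lemma \ref{degenerationlemma}. By the inductive hypothesis, $\Xi_0 = \Xi_{\rho(0)}$ is non-torsion in $J(X_0,\sigma)$. Applying Lemma \ref{degenerationlemma}, $\Xi_t$ is non-torsion for very general $t\in\Delta^*$, hence $\Xi_u$ is non-torsion for $u$ in a dense (indeed, containing a very general point) subset of $\hat U_r$. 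To upgrade non-torsion-ness of the set $\Xi_u$ to the rank $2$ statement, I again use the $\Z[\zeta]$-module structure on $J(\X_u,\sigma)$: non-torsion of $\Xi_u = \{\nu_E(\xi_u), \nu_E(\sigma_*\xi_u), \nu_E(\sigma^2_*\xi_u)\}$ means some element, say $\nu_E(\xi_u)$, is non-torsion, and then the same $\Z[\zeta]$-linear-independence argument as in the base case (using absence of $\Z[\zeta]$-torsion / sub-Hodge structures for very general $u$, which holds by Proposition \ref{NSprop} since $\NS(X_u) = L(X_u,\sigma)$) shows $\nu_E(\xi_u)$ and $\nu_E(\sigma_*\xi_u)$ are $\Z$-linearly independent. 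Combined with Proposition \ref{nonzeroprop}, which guarantees $\nu_E$ factors through $\CH^2(\X_u,1)_{\mathrm{ind}}$ for very general $u$ (here one uses that $U_r\to\M_{r,a}$ is dominant, Proposition \ref{dominate}), this yields the rank $2$ conclusion for $r$.

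The main obstacle, and the point where care is genuinely required, is the application of Lemma \ref{degenerationlemma} in the case $r=10\to r=12$: this is precisely the non-trivial containment $U_{12}\subset\overline{U_{10}}$ of Proposition \ref{closure}, where the branch curve degenerates from an irreducible $(3,3)$-curve with four nodes to a reducible $B_{2,1}+B_{1,2}$, so the combinatorial type of the $A_2$-singularities and of the configuration $(B,F_1,F_2)$ changes along the arc. One must verify that the hypotheses of Lemma \ref{degenerationlemma} are met — in particular that $\rho(t)\in\hat U$ for all $t$, i.e. the tangent conditions (F1)--(F3) persist in the limit — and that the limit of the cycle $\xi$ (which is what Lemma \ref{degenerationlemma} tracks via the section $\nu$ of $\J(\E)$) is identified with the cycle $\xi_0$ built intrinsically on $X_0\in\hat U_{12}$; this identification is exactly the content of properties (a) and (b) in the proof of the lemma, and the resolution-of-singularities bookkeeping there (Cases 1--3) is designed to make it work. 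A secondary, more routine obstacle is confirming that for very general $u$ the Hodge structure $E(\X_u,\sigma)$ is simple enough as a $\Z[\zeta]$-module that the passage from "some element of $\Xi_u$ is non-torsion" to "$\widetilde\Xi_u$ spans rank $2$" is valid; this follows from Proposition \ref{NSprop} together with the irreducibility of the transcendental lattice for very general members, but should be stated.
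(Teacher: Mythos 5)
Your overall strategy is the paper's: a descending induction from $r=18$ using Lemma \ref{degenerationlemma} and Proposition \ref{closure}, the base case settled by the inhomogeneous Picard--Fuchs computation of Section 7, the rank-$2$ statement extracted from the $\Z[\zeta]$-module structure of $J(\X_u,\sigma)$, and Proposition \ref{nonzeroprop} used to pass to the indecomposable part. However, two of your justifications do not work as written. First, the rank-$2$ step: you deduce from $(m+n\zeta)\nu_E(\xi_u)=0$ that $m=n=0$ by claiming that $J(X,\sigma)$ ``has no $\Z[\zeta]$-torsion'' for very general $u$, citing the absence of proper sub-Hodge structures. This is false: $J(X,\sigma)$ is a (generalized) complex torus and is divisible with abundant torsion as an abelian group, hence as a $\Z[\zeta]$-module, and sub-Hodge structures of $E(X,\sigma)$ have no bearing on this. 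The correct argument needs only the non-torsionness of $\nu_E(\xi_u)$ itself: multiplying $(m+n\zeta)\nu_E(\xi_u)=0$ by the conjugate $m+n\zeta^2$ gives $(m^2-mn+n^2)\nu_E(\xi_u)=0$, forcing $m^2-mn+n^2=0$ and hence $m=n=0$. Equivalently, every nonzero ideal of $\Z[\zeta]$ meets $\Z$ nontrivially, so the $\Z[\zeta]$-submodule generated by a non-torsion element is free of rank one over $\Z[\zeta]$, i.e.\ of $\Z$-rank $2$; this is the argument the paper uses, and it requires no genericity input beyond non-torsionness.

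Second, in the inductive step you pass from ``$\Xi_{\rho(t)}$ is non-torsion for very general $t\in\Delta^*$'' to ``$\Xi_u$ is non-torsion for $u$ in a dense (indeed, containing a very general point) subset of $\hat U_r$.'' Containing one very general point, or being dense, is not the conclusion you need: the statement $(\mathbf N_{r})$ requires non-torsionness at \emph{every} point outside a countable union of proper analytic subsets of some irreducible component. The missing ingredient is Proposition \ref{multisection}: $\Xi$ is locally a union of three holomorphic sections of $\J(\X,\sigma)$ over $\hat U_r$, so once $\Xi_v$ is non-torsion at a single point $v$ of a component $V'$, for each positive integer $m$ the locus where the corresponding local section is killed by $m$ is a proper analytic subset, and non-torsionness then holds for very general points of $V'$. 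Your identification of the $U_{12}\subset\overline{U_{10}}$ degeneration as the delicate containment, and of properties (a) and (b) in the proof of Lemma \ref{degenerationlemma} as the point where the limit cycle is matched with the intrinsic cycle on the special fiber, is accurate and consistent with the paper.
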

By Proposition \ref{dominate}, $U_r\rightarrow \M_{r,a}$ is dominant, so we have the following.
\begin{cor}\label{mainintro}
Let $r$ be the even number satisfying $2\le r\le 18$ and we set $a=1+r/2$ if $r\le 10$ and we set $a=11-r/2$ if $r\ge 12$.
For a very general member of $\M_{r,a}$, the corresponding Eisenstein $K3$ surface $(X,\sigma)$ satisfies $\rank \CH^2(X,1)_\ind\ge 2$.
\end{cor}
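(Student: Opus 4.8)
The plan is to deduce Theorem~\ref{mainthm} from the single assertion that, for each even $r$ with $2\le r\le 18$, the multisection $\Xi$ is non-torsion at some point of $U_r$. To set up this reduction, fix a very general $u\in U_r$. By Proposition~\ref{nonzeroprop}, $\nu_E$ factors through $\CH^2(\X_u,1)_\ind$, so it is enough to understand the subgroup $R_u\subset\CH^2(\X_u,1)_\ind$ generated by the image of $\widetilde{\Xi}_u$. Since $\sigma$ covers the identity on $Y$, it fixes each curve $C_i$; therefore $\sigma_*(\xi)$ and $\sigma_*^2(\xi)$ are again of the form $(C_1,\ast)+(C_2,\ast)$, with the relevant divisors on $C_1$ equal to $p_2-p_1$ and $p_0-p_2$, so that $\xi+\sigma_*(\xi)+\sigma_*^2(\xi)$ is represented by $(C_1,c_1)+(C_2,c_2)$ with $c_1,c_2\in\C^\times$ and hence lies in $\CH^2(\X_u,1)_\dec$. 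It follows that $R_u$ is a cyclic $\Z[\zeta]$-module generated by $(\xi)_\ind$; as $\Z[\zeta]$ is a principal ideal domain, such a module has $\Z$-rank $2$ unless it is finite, and the latter occurs only if $\nu_E(\xi)$, hence $\Xi_u$, is torsion. So Theorem~\ref{mainthm} will follow once we know that $\Xi_u$ is non-torsion for very general $u\in U_r$.

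The second step is to reduce this to a statement at a single point. By Proposition~\ref{multisection}, $\Xi$ is a degree $3$ holomorphic multisection of $\J(\X,\sigma)\to\hat U_r$; for fixed $m$, the locus where a local branch of $\Xi$ is $m$-torsion is analytic, and by the identity theorem it is either an entire connected component or a proper analytic subset. Hence $\{u:\Xi_u\text{ is torsion}\}$ is, on each component, either everything or a countable union of proper analytic subsets, and the same holds over $U_r$ since $\hat U_r\to U_r$ is finite étale. So it will suffice to exhibit, on each component, one point where $\Xi$ is non-torsion.

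I would prove this by downward induction on $r$. The base case $r=18$ is the explicit computation of \S7: the regulator formula (\ref{Levineformula}) expresses $\nu_E(\xi)$ through the incomplete integral of \cite{MT}, the associated normal function $G(\lambda)$ is shown to satisfy the inhomogeneous hypergeometric equation (\ref{PFintro}) with nonzero right-hand side, while the periods of the family span the solution space of the homogeneous Gauss hypergeometric equation of type $(2/3,2/3,1)$; since $G(\lambda)$ then cannot be a $\Q$-linear combination of periods, $\Xi_u$ is non-torsion for very general $u\in U_{18}$. For the inductive step I would assume the claim for $r+2$ with $2\le r\le 16$. By Proposition~\ref{closure}, $\hat U_{r+2}$ lies in the Zariski, hence analytic, closure of $\hat U_r$; combining this with the dichotomy of the previous paragraph for $\hat U_{r+2}$, I can choose a point $p\in\hat U_{r+2}$ with $\Xi_p$ non-torsion which is the centre of a holomorphic disk $\rho\colon\Delta\to|\O(3,3)|\times|\O(1,0)|\times|\O(0,1)|$ satisfying $\rho(0)=p$, $\rho(\Delta)\subset\hat U$ and $\rho(\Delta^*)\subset\hat U_r$ (such a disk exists by curve selection, since $\hat U_r$ is open in its closure and $\hat U$ is open, after shrinking $\Delta$). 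Lemma~\ref{degenerationlemma} then gives that $\Xi_{\rho(t)}$ is non-torsion for very general $t\in\Delta^*\subset\hat U_r$, in particular at some point of $U_r$; this closes the induction and proves Theorem~\ref{mainthm}. Corollary~\ref{mainintro} is then immediate, since $U_r\to\M_{r,a}$ is dominant by Proposition~\ref{dominate}.

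The hard part will be the base case $r=18$: everything else is essentially formal once a sufficiently explicit description of $\nu_E(\xi)$ is available, but identifying the normal function $G(\lambda)$ with an incomplete hypergeometric-type integral, verifying that it satisfies (\ref{PFintro}), and establishing its $\Q$-linear independence from the period functions form the technical core of \S7 and rest on the incomplete-integral techniques of \cite{MT}. A secondary subtlety worth noting is that the induction must be run with the ``very general'' hypothesis at level $r+2$ rather than at a prescribed point, precisely so that the limit parameter $p$ can be chosen at once non-torsion and approachable by a holomorphic disk inside $\hat U_r$; this is also why it is important that Proposition~\ref{closure} is stated for the spaces $\hat U_r$ carrying the cycle data rather than only for the $U_r$.
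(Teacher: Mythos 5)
Your proposal is correct and follows essentially the same route as the paper: reduce to the non-torsionness of $\Xi_u$ in the Eisenstein Jacobian via Proposition \ref{nonzeroprop} and the $\Z[\zeta]$-module structure, run a downward induction on $r$ using Proposition \ref{closure} together with the degeneration lemma (Lemma \ref{degenerationlemma}) and the local-section/identity-theorem dichotomy from Proposition \ref{multisection}, anchor it at $r=18$ with the inhomogeneous hypergeometric equation for $G(\lambda)$, and finally deduce the corollary from the dominance of $U_r\to\M_{r,a}$ (Proposition \ref{dominate}). The only cosmetic difference is that you verify the Eisenstein relation directly in $\CH^2(\X_u,1)_\ind$ by noting $\xi+\sigma_*\xi+\sigma_*^2\xi$ is decomposable, whereas the paper works with $\Z\Xi_u$ inside $J(\X_u,\sigma)$, where that relation is automatic.
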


The key to the proof of Theorem \ref{mainthm} is the following.
\begin{prop}\label{keyprop}
For each $r=2,4,\dots, 18$, we consider the following property.
\begin{equation*}
(\mathbf{N}_r): \begin{aligned}
&\text{There exists an irreducible component $V$ of $\hat{U}_r$}\\
&\text{such that $\Xi_u$ is non-torsion for a very general $u\in V$.}
\end{aligned}
\end{equation*}
Then $(\mathbf{N}_r)$ holds for all $r$.
\end{prop}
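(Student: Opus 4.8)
The plan is to prove $(\mathbf N_r)$ by descending induction on the even integer $r$, from $r=18$ down to $r=2$. The start of the induction, $(\mathbf N_{18})$, is the content of Section~7: there one computes the Beilinson regulator explicitly, identifies the normal function attached to $\Xi$ along a curve in $\hat{U}_{18}$ as a solution of the inhomogeneous hypergeometric equation \eqref{PFintro}, and compares it with the period functions (which solve the associated homogeneous equation) to conclude that $\Xi$ is non-torsion at the corresponding very general point, hence at some point of a component of $\hat{U}_{18}$. The inductive step combines Proposition~\ref{closure} (the stratum $\hat{U}_{r+2}$ lies in the Zariski closure of $\hat{U}_r$) with the degeneration Lemma~\ref{degenerationlemma}.

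Before running the induction I would record a reformulation of $(\mathbf N_r)$. Fix $r$ and an irreducible component $V$ of $\hat{U}_r$. For $m\ge 1$ the set $Z^V_m=\{u\in V : m\cdot\Xi_u=\{0\}\}$ is a closed analytic subset of $V$: locally on $V$ the multisection $\Xi$ is a triple of holomorphic sections $\nu_0,\nu_1,\nu_2$ of the family of generalized tori $\J(\X,\sigma)$ by Proposition~\ref{multisection}, and $Z^V_m$ is cut out by the closed analytic conditions $m\nu_0=m\nu_1=m\nu_2=0$. Since $Z^V_m\subseteq Z^V_{km}$, the torsion locus of $\Xi$ on $V$ equals the increasing countable union $\bigcup_N Z^V_{N!}$; as $V$ is irreducible (a positive-dimensional variety, hence a Baire space), either some $Z^V_{N!}$ equals $V$, i.e. $N!\cdot\Xi\equiv 0$ on $V$, or every $Z^V_{N!}$ is a proper closed analytic subset and then $\Xi_u$ is non-torsion for a very general $u\in V$. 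In particular, for each component $V$: $\Xi_u$ is non-torsion for a very general $u\in V$ if and only if it is non-torsion for at least one $u\in V$. Taking the least common multiple over the finitely many components, this shows that $(\mathbf N_r)$ \emph{fails} if and only if there is a single integer $m\ge 1$ with $m\cdot\Xi_u=\{0\}$ for every $u\in\hat{U}_r$.

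For the inductive step let $2\le r\le 16$, assume $(\mathbf N_{r+2})$, and suppose for contradiction that $(\mathbf N_r)$ fails, so fix $m$ with $m\cdot\Xi_u=\{0\}$ for all $u\in\hat{U}_r$. By $(\mathbf N_{r+2})$ there is $u_0\in\hat{U}_{r+2}$ with $\Xi_{u_0}$ non-torsion. By Proposition~\ref{closure}, $u_0$ lies in $\overline{V}^{\,\mathrm{Zar}}$ for some component $V$ of $\hat{U}_r$; since $\hat{U}$ is locally closed and contains $\hat{U}_r$, the set $\overline{V}^{\,\mathrm{Zar}}\cap\hat{U}$ is a positive-dimensional irreducible variety containing $u_0$ in which $V$ is dense and $(\overline{V}^{\,\mathrm{Zar}}\cap\hat{U})\setminus V$ is a proper closed subset. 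Choosing a general holomorphic arc $\rho:\Delta\to\overline{V}^{\,\mathrm{Zar}}\cap\hat{U}$ with $\rho(0)=u_0$, its punctured part avoids that proper subset, so $\rho(\Delta^*)\subseteq V\subseteq\hat{U}_r$ and $\rho$ satisfies hypotheses (i), (ii) of Lemma~\ref{degenerationlemma}. As $\Xi_0=\Xi_{u_0}$ is non-torsion, Lemma~\ref{degenerationlemma} produces a $t\in\Delta^*$ with $\Xi_{\rho(t)}$ non-torsion; but $\rho(t)\in\hat{U}_r$ forces $m\cdot\Xi_{\rho(t)}=\{0\}$, a contradiction. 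Hence $(\mathbf N_r)$ holds, completing the induction.

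The genuine weight of the argument lies outside this bookkeeping. The hardest part is the base case $r=18$: carrying out the explicit regulator computation of Section~7, deriving the inhomogeneous Picard--Fuchs equation \eqref{PFintro} for the normal function of $\Xi$, and comparing it with the period functions to rule out triviality. Within the present section, the only nontrivial point is the construction, internal to Lemma~\ref{degenerationlemma}, of a one-parameter degeneration whose central fibre faithfully reproduces the Eisenstein $K3$ surface and the cycle class $\Xi_{u_0}$ — this is exactly what the surjection $\J(\E)_0\twoheadrightarrow J(X_0,\sigma)$ and properties (a), (b) in the proof of that lemma are designed to provide — so once Lemma~\ref{degenerationlemma} and Proposition~\ref{closure} are in hand the induction above is essentially formal.
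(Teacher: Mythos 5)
Your proposal is correct and follows essentially the same route as the paper: the base case $r=18$ is deferred to the regulator computation of Section~7, and the inductive step combines Proposition~\ref{closure} with Lemma~\ref{degenerationlemma} to produce one non-torsion point on a component of the next stratum, then spreads it to a very general point via the local sections of Proposition~\ref{multisection}. Your explicit torsion-locus/Baire reformulation (note only that each $Z^V_m$ is locally a \emph{countable union} of closed analytic subsets, owing to the choice of lattice translate, which does not affect the argument) and the contradiction framing are just a more detailed packaging of what the paper does implicitly.
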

First, we see that Theorem \ref{mainthm} follows from Proposition \ref{keyprop}.
\begin{proof}[Proof of Proposition \ref{keyprop} $\Rightarrow$ Theorem \ref{mainthm}]
By Proposition \ref{keyprop}, on some irreducible component $V$ of $\hat{U}_r$, $\Xi_u(\subset J(\X_u,\sigma))$ is non-torsion for a very general $u\in V$.
Since $J(\X_u,\sigma)$ is a $\Z[\zeta]$-module structure by the $\sigma^*$-action and $\Z\Xi_u$ is its non-torsion submodule, the rank of $\Z\Xi_u$ as abelian group is $2$.

Let $V^\circ$ is the smooth locus of $V$.
Then by Proposition \ref{dominate}, the family of Eisenstein $K3$ surfaces $\X|_{V^\circ}\rightarrow V^\circ$ satisfies the assumption in Proposition \ref{nonzeroprop}.
Hence for a very general $u \in V^\circ$, $\nu_E$ factors $\CH^2(\X_u,1)_\ind$.
Since $\nu_E(\widetilde{\Xi}_u) = \Xi_u$ and $\Xi_u$ generates a rank 2 subgroup in $J(\X_u,\sigma)$, $\widetilde{\Xi}_u$ generates a rank 2 subgroup in $\CH^2(\X_u,1)_\ind$ for such $u$.
\end{proof}

The Proposition \ref{keyprop} is proved by the induction.
The proof of the starting case $r=18$ will be postponed to the next section, so we only prove the induction process $(\mathbf{N}_r)\Rightarrow (\mathbf{N}_{r-2})$ here.
\begin{proof}[Proof of $(\mathbf{N}_r)\Rightarrow (\mathbf{N}_{r-2})$]
Let $r$ be either $4,6,\dots, 18$.
By the induction hypothesis, we can take an irreducible components $V$ of $\hat{U}_{r}$ such that $\Xi_u$ is non-torsion for a very general $u\in V$.
We fix a point $u\in V$ such that $\Xi_u$ is non-torsion.
By Proposition \ref{closure}, $u$ is contained in the closure of $\hat{U}_{r-2}$, so we can take a holomorphic map $\rho:\Delta \rightarrow |\O(3,3)|\times |\O(1,0)|\times |\O(0,1)|$ such that $\rho(0) = u$ and $\rho(\Delta^*)\subset \hat{U}_{r-2}$.
Then by Lemma \ref{degenerationlemma}, $\Xi_{\rho(t)}$ is non-torsion for a very general $t\in \Delta^*$.
Hence we have a point $v\in \hat{U}_{r-2}$ such that $\Xi_{v}$ is non-torsion.
Let $V'$ be an irreducible component of $\hat{U}_{r-2}$ which $v$ belongs to.
By Proposition \ref{multisection}, $\Xi = \{\Xi_{v'}\}_{v'\in V'} \subset \J(\X|_{V'},\sigma)$ contains a non-torsion holomorphic local section.
In particular, $\Xi_{v'}$ is non-torsion for a very general $v'\in V'$.
Thus $V'$ satisfies the property $(\mathbf{N}_{r-2})$.
\end{proof}

\section{Case $r=18$}
In this section, we prove the starting case $(\mathbf N_{18})$ in Proposition \ref{keyprop} by calculating the regulator map explicitly.
In the case $r=18$, our higher Chow cycle is essentially constructed in the paper \cite{MT}, although they do not use cycle-theoretic terms.

\subsection{The transcendental regulator map}
Let $X$ be a complex $K3$ surface.
Recall that $\nu: \CH^2(X,1)\rightarrow J(X)$ denotes the regulator map.
To prove the non-torsionness of $(2,1)$-cycles, we need not compute the whole image of the regulator map.
We use the following variant $\Phi$ of the regulator map called the \textit{transcendental regulator map}.
\begin{equation}\label{transreg}
\begin{tikzcd}
\Phi:&[-30pt] \CH^2(X,1)\arrow[r,"\nu"] & J(X) \simeq \dfrac{(F^1H^2(X,\C))^\vee}{H_2(X,\Z)} \arrow[r,twoheadrightarrow] &\dfrac{(H^{2,0}(X))^\vee}{H_2(X,\Z)}
\end{tikzcd}
\end{equation}
where the last projection is induced by $H^{2,0}(X) \hookrightarrow F^1H^2(X,\C)$.

Let $\omega$ be a non-zero holomorphic 2-form on $X$.
Considering the paring with $[\omega]\in H^{2,0}(X)$, we have an isomorphism between the target of (\ref{transreg}) and the abelian group $\C/\Pc(\omega)$, where $\Pc(\omega)$ is the set of periods of $X$ with respect to $\omega$, i.e.,
\begin{equation}\label{perdef}
\Pc(\omega) = \left\{\displaystyle \int_{\Gamma}\omega  \in \C : \Gamma \text{ is a toplogical 2-cycle on }X.\right\}.
\end{equation}
By the formula (\ref{Levineformula}), the transcendental regulator map is calculated as
\begin{equation}\label{transregval}
\Phi(\xi)([\omega]) \equiv  \int_{\Gamma}\omega  \mod \Pc(\omega)
\end{equation}
where $\Gamma$ is a 2-chain associated with $\xi$.

Let $\X\rightarrow S$ be a family of $K3$ surfaces and $\omega$ be a nowhere vanishing relative 2-form.
For an open subset $U$ of $S$ in the classical topology, a period function with respect to $\omega$ on $U$ is a holomorphic function on $U$ given by $U\ni s\mapsto \int_{\Gamma_s}\omega_s$, where $\{\Gamma_s\}_{s\in U}$ is a $C^\infty$-family of 2-cycles on $\X_s$.
The period functions with respect to $\omega$ generate (as a sheaf of abelian groups) the subsheaf $\Pc(\omega)$ of the sheaf $\O_S^\an$ of holomorphic functions. 
Let $\Qc(\omega)$ be the quotient $\O_{S}^\an/\Pc(\omega)$.
For each $s\in S$, the evaluation map $\O_{S}^\an \rightarrow \C; f\mapsto f(s)$ induces the map
\begin{equation*}
\ev_s: \Gamma(S,\Qc(\omega)) \lra \C/\Pc(\omega_s).
\end{equation*}

For an analytic family $\xi = \{\xi_s\}_{s\in S}$ of $(2,1)$-cycles, the right-hand side of (\ref{transregval}) varies holomorphically on $S$.
Thus we have the section $\nu_{\tr}(\xi)\in \Gamma(S,\Qc(\omega))$ such that for each $s\in S$,
\begin{equation*}
\ev_s(\nu_{\tr}(\xi)) = \Phi(\xi_s)([\omega_s]) \quad\text{in } \C/\Pc(\omega_s).
\end{equation*}
We use the following elementary property of sections of $\Qc(\omega)$.
For the proof, see, e.g., \cite{Sat24}, Lemma 2.4.
\begin{lem}\label{importantlemma}
If $\varphi\in \Gamma(S,\Qc(\omega))$ is non-zero, $\ev_s(\varphi)$ is non-zero for a very general $s\in S$.
In particular, if $\nu_{\tr}(\xi)\neq 0$, $\Phi(\xi_s)\neq 0$ for a very general $s\in S$.
\end{lem}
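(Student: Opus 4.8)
The plan is to show the contrapositive for the first assertion: if $\ev_s(\varphi) = 0$ for all $s$ outside a "small" set (not merely very general, but actually for all $s$ in a set whose complement is contained in a countable union of proper analytic subsets), then $\varphi = 0$. The key point is that $\varphi \in \Gamma(S,\Qc(\omega))$ lifts locally to a holomorphic function: over any sufficiently small connected open $U \subset S$ in the classical topology we may choose a lift $\tilde\varphi \in \O_S^{\an}(U)$, and the condition $\ev_s(\varphi) = 0$ says precisely that $\tilde\varphi(s) \in \Pc(\omega_s)$, i.e., $\tilde\varphi(s)$ is a period. So the first step is to analyze the structure of the period sheaf $\Pc(\omega)$: it is the subsheaf of $\O_S^{\an}$ generated by the period functions $s \mapsto \int_{\Gamma_s}\omega_s$ coming from $C^\infty$-families of $2$-cycles. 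Since $H_2(\X_s,\Z)$ is finitely generated (the fibers are $K3$ surfaces, so $H_2 \cong \Z^{22}$) and the local system $R_2\pi_*\Z$ has, locally on $U$, a finite set of flat generators $\Gamma_s^{(1)},\dots,\Gamma_s^{(22)}$, every period function on $U$ is a $\Z$-linear combination of the finitely many "basic" period functions $P_k(s) = \int_{\Gamma_s^{(k)}}\omega_s$. Hence $\Pc(\omega)|_U = \sum_k \Z\cdot P_k$, a countably generated subgroup of $\O_S^{\an}(U)$ at each point.

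The second step is the measure-theoretic / Baire-category argument that makes "very general" appear. Suppose $\varphi \neq 0$; pick a connected open $U$ on which $\varphi$ has a lift $\tilde\varphi$ with $\tilde\varphi \not\equiv 0$ (possible since $\varphi \neq 0$ means its stalk is nonzero somewhere, and a nonzero stalk means a nonzero germ, hence a lift that is not identically zero on a connected neighborhood). For each integer vector $\mathbf{n} = (n_1,\dots,n_{22}) \in \Z^{22}$ consider the holomorphic function $g_{\mathbf n} = \tilde\varphi - \sum_k n_k P_k$ on $U$. If $\ev_s(\varphi) = 0$ for $s$ in some subset $T \subset U$, then each such $s$ lies in $\bigcup_{\mathbf n} Z(g_{\mathbf n})$ where $Z(g_{\mathbf n})$ is the zero locus of $g_{\mathbf n}$. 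Now either $g_{\mathbf n} \equiv 0$ for some $\mathbf n$ — but then $\tilde\varphi = \sum_k n_k P_k$ is itself a period function, forcing $\varphi = 0$ on $U$ and contradicting our choice — or every $g_{\mathbf n}$ is a nonzero holomorphic function, so $Z(g_{\mathbf n})$ is a proper analytic subset of $U$. Therefore $T \subset \bigcup_{\mathbf n \in \Z^{22}} Z(g_{\mathbf n})$, a countable union of proper analytic subsets, i.e., $T$ is contained in the "non-very-general" locus. Contrapositively, if $\ev_s(\varphi)$ vanishes on the complement of a countable union of analytic subsets — in particular if it vanishes on a set that is not contained in any such union — then $\varphi = 0$ on $U$, and since $U$ was an arbitrary small open where a nonzero lift exists, $\varphi = 0$. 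This gives the first statement; the "in particular" for $\nu_{\tr}(\xi)$ is then immediate by taking $\varphi = \nu_{\tr}(\xi)$ and recalling $\ev_s(\nu_{\tr}(\xi)) = \Phi(\xi_s)([\omega_s])$.

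One technical point to handle carefully is the passage from "locally on $U$" to a global statement on $S$: a priori the countable union of bad analytic subsets is built inside each chart $U$, but $S$ is second countable (being a complex manifold), so it is covered by countably many such $U$, and a countable union of (countable unions of proper analytic subsets) is again a countable union of proper analytic subsets of $S$ — hence still "very general" in the sense fixed in the Conventions. I expect the main obstacle to be bookkeeping around the monodromy of the local system $R_2\pi_*\Z$: on a non-simply-connected $U$ one cannot globally choose flat generators $\Gamma_s^{(k)}$, so one must either shrink $U$ to a simply connected (or contractible) polydisc — which is harmless since we only need the conclusion locally before taking a countable cover — or phrase everything in terms of the pullback to the universal cover. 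Since the lemma is cited as routine (the excerpt refers to \cite{Sat24} Lemma 2.4), the cleanest route is to reduce immediately to contractible $U$, where $\Pc(\omega)|_U$ is genuinely a finitely generated free $\Z$-module of holomorphic functions, and then run the Baire-type argument above verbatim.
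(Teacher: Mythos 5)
Your argument is correct and is exactly the standard proof of this statement; the paper itself does not prove the lemma but delegates it to \cite{Sat24}, Lemma 2.4, which runs the same way (lift $\varphi$ locally, note that over a contractible chart $\Pc(\omega)$ is the $\Z$-span of the finitely many period functions $P_k$ attached to a flat frame of $R_2\pi_*\Z$, and observe that the vanishing locus of $\ev_s(\varphi)$ is covered by the zero sets of the countably many functions $\tilde\varphi-\sum_k n_kP_k$, each a proper analytic subset unless $\varphi$ vanishes on the chart). The only point worth making explicit is that $S$ must be taken connected (as it is in the application, $S=T=\P^1\setminus\{0,1,\infty\}$): your identity-theorem step then shows the stalkwise vanishing locus of $\varphi$ is open and closed, so a nonzero $\varphi$ has nonzero stalk everywhere and the local "bad sets" glue to a countable union of proper analytic subsets of all of $S$.
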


Let $(X,\sigma)$ be an Eisenstein $K3$ surface.
Then we have the inclusion $H^{2,0}(X)\hookrightarrow E(X,\sigma)_\sigma$.
Then the surjection $J(X)\twoheadrightarrow H^{2,0}(X)^\vee/H_2(X,\Z)$ in (\ref{transreg}) factors $J(X,\sigma)$.
This implies the following.
\begin{prop}\label{reduce1}
For a $(2,1)$-cycle $\xi$, if $\Phi(\xi)$ is non-torsion, then $\nu_{E}(\xi)$ is so.
\end{prop}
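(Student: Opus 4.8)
The plan is to unwind the definitions and show that the chain of maps defining $\Phi$ in \eqref{transreg} passes through the Eisenstein Jacobian, so that $\Phi$ factors through $\nu_E$. Concretely, I would first observe that the composite surjection $J(X)\twoheadrightarrow (H^{2,0}(X))^\vee/H_2(X,\Z)$ is dual to the inclusion $H^{2,0}(X)\hookrightarrow F^1H^2(X,\C)$, and that $H^{2,0}(X)=\C\cdot[\omega]$ lies in $H_\zeta = E(X,\sigma)\otimes_{\Z[\zeta]}\C$ (this is noted already in Section 2.1). Hence $H^{2,0}(X)$ actually lies in $F^2\big(E(X,\sigma)_\C\big)$ inside $F^1H^2(X,\C)$, and the functional $[\omega]$ annihilates the part of $H^2(X,\C)$ coming from $L(X,\sigma)_\C$.

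Next I would make precise the claim that $J(X)\twoheadrightarrow (H^{2,0}(X))^\vee/H_2(X,\Z)$ factors through $J(X,\sigma)=J(E(X,\sigma)^\vee)$. For this, recall from Section 3 that $J(X)\twoheadrightarrow J(X,\sigma)$ is induced by the surjection \eqref{surjHodge}, $H^2(X,\Z)=H^2(X,\Z)^\vee\twoheadrightarrow E(X,\sigma)^\vee$, dual to the inclusion $E(X,\sigma)\hookrightarrow H^2(X,\Z)$. Under the description $J(X)\simeq (F^1H^2(X,\C))^\vee/H_2(X,\Z)$, evaluating a functional against $[\omega]$ only sees the restriction of that functional to $H^{2,0}(X)\subset F^2(E(X,\sigma)_\C)\subset F^1(E(X,\sigma)_\C)$, and the transpose of $E(X,\sigma)\hookrightarrow H^2(X,\Z)$ identifies this restriction with the pairing on $E(X,\sigma)^\vee$. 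Therefore the kernel of $J(X)\twoheadrightarrow J(X,\sigma)$ (which by \eqref{JNS}-type reasoning is generated by $L(X,\sigma)_\C$ modulo lattice) is contained in the kernel of $J(X)\twoheadrightarrow (H^{2,0}(X))^\vee/H_2(X,\Z)$; this gives the desired factorization $\Phi = (\text{proj})\circ \nu_E$.

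Granting this factorization, the conclusion is immediate: if $\Phi(\xi)$ is non-torsion, then since $\Phi(\xi)$ is the image of $\nu_E(\xi)\in J(X,\sigma)$ under a group homomorphism, $\nu_E(\xi)$ cannot be torsion (a homomorphism sends torsion to torsion). I would phrase the final step in exactly this one-line form.

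The main obstacle, such as it is, is purely bookkeeping: one must be careful about the Tate twists and the identifications $H^2(X,\Z)=H^2(X,\Z)^\vee$, $E(X,\sigma)^\vee$ as a weight-2 Hodge structure, and the two incarnations of $J(X)$ (the one in \eqref{Delignefunctional} versus $H_\C/(H_\Z+F^2H_\C)$), so that the diagram
\begin{equation*}
\begin{tikzcd}
J(X)\arrow[r,twoheadrightarrow]\arrow[rd,twoheadrightarrow] & J(X,\sigma)\arrow[d,twoheadrightarrow]\\
& (H^{2,0}(X))^\vee/H_2(X,\Z)
\end{tikzcd}
\end{equation*}
genuinely commutes. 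There is no hard analysis or geometry here; once the inclusion $H^{2,0}(X)\subset E(X,\sigma)\otimes_{\Z[\zeta]}\C$ from Section 2.1 is invoked, everything is formal linear algebra of Hodge structures.
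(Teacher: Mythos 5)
Your proposal is correct and follows essentially the same route as the paper: the key point in both is that $H^{2,0}(X)$ lies inside $E(X,\sigma)\otimes\C$, so the projection $J(X)\twoheadrightarrow (H^{2,0}(X))^\vee/H_2(X,\Z)$ factors through $J(X,\sigma)$, giving $\Phi$ as a homomorphic image of $\nu_E$, whence non-torsion is inherited. Your write-up just spells out the linear-algebra bookkeeping that the paper leaves implicit.
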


\subsection{A normalized family}
In this section, we will define a 1-dimensional subfamily $T$ of $\hat{U}_{18}$, such that the natural map $T\times (\mathrm{PGL})^2 \rightarrow \hat{U}_{18}$ is dominant.

Let $T=\P^1-\{0,1,\infty\} = \Spec \C[\lambda,1/\lambda(1-\lambda)]$.
Let $x_0,x_1$ (resp. $y_0,y_1$) be the 1st (resp. 2nd) homogeneous coordinates of $Y=\P^1\times \P^1$ and $x=x_1/x_0$ (resp. $y=y_1/y_0$) be the 1st (resp. 2nd) inhomogeneous coordinates.
Let $\Y = Y\times T$.
Then we define $s \in \Gamma(\Y,\O_\Y(3,3))$ by
\begin{equation}
s=y_0y_1(x_0y_1-x_1y_0)(x_0-x_1)(x_0-\lambda x_1).
\end{equation}
Then for each $\lambda$, $B_\lambda = \div (s_\lambda) \in U_{18}$.
(See Figure. \ref{blambda}.)
Let $F_1$ be the $(1,0)$-divisor $x=0$ and $F_2$ be the $(0,1)$-divisor $y=1$.
Then the map $T\rightarrow \hat{U}_{18}; \lambda \mapsto (B_\lambda, F_1,F_2)$ is well-defined.
By the construction, we see that $(\mathrm{PGL}_2)\times T \rightarrow \hat{U}_{18}$ is surjective.

\begin{figure}
\includegraphics[width = 70mm]{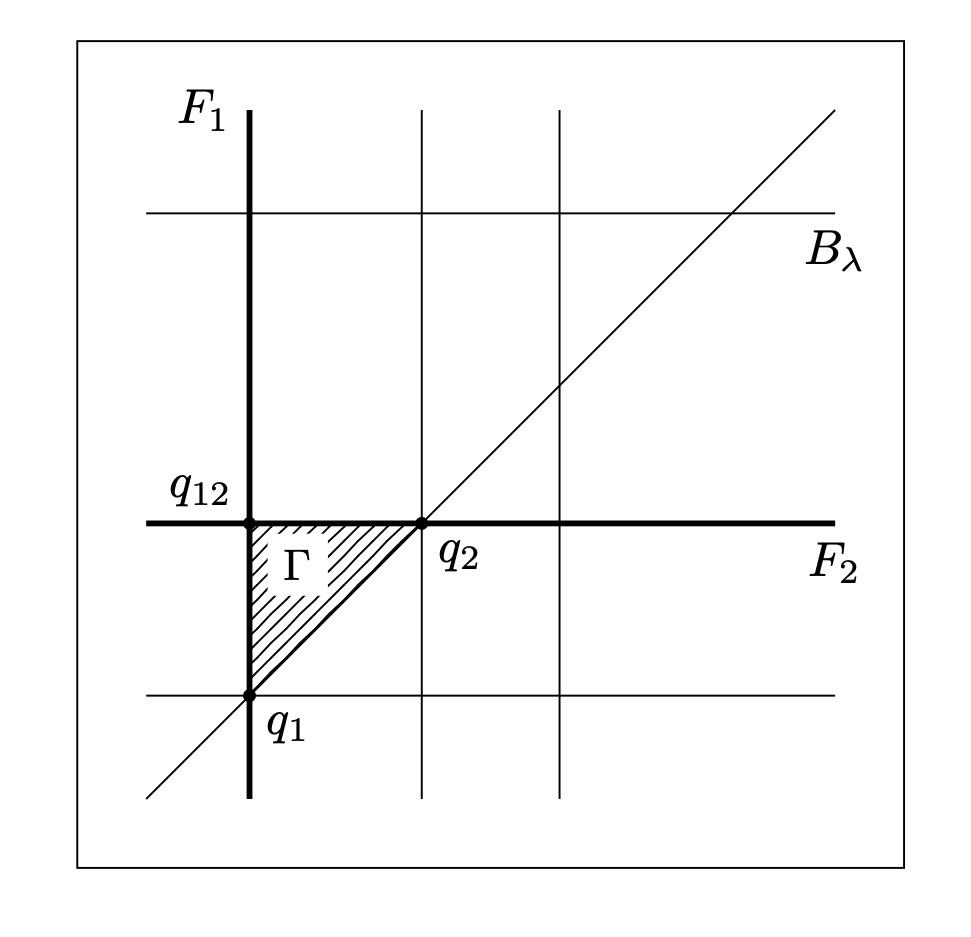}
\caption{The branching locus $B_\lambda$ and curves $F_1,F_2$}
\label{blambda}
\end{figure}

Let $\overline{\X}$ be the triple covering associated with $(\O_\Y(-3,-3),s)$ and $\X\rightarrow \overline{\X}$ be the minimal resolution of singularities.
Then we have a family of Eisenstein $K3$ surfaces $(\X,\sigma)\rightarrow T$ which is the pull-back of the universal family by $T\rightarrow \hat{U}_{18}$.
Furthermore, for each $\lambda\in T$, we have a collection $\widetilde{\Xi}_\lambda$ of $(2,1)$-cycles on $\X_\lambda$.

\begin{equation}
\begin{tikzcd}
\X \arrow[r,"\pi_2","\text{blow-up}"'] \arrow[rr,bend left = 20pt, "\pi"]&[30pt] \overline{\X} \arrow[r,"\pi_1","\text{triple covering}"']  &[30pt] \Y = Y\times T
\end{tikzcd}
\end{equation}
Then we see that on some affine open subset, $\overline{\X}_\lambda$ can be written as
\begin{equation}\label{affineequation}
z^3 = y(y-x)(1-x)(1-\lambda x)
\end{equation}
and the covering transformation $\sigma: \overline{\X}\rightarrow \overline{\X}$ is given by $(x,y,z)\mapsto (x,y,\zeta z)$.
From this equation, we see that $\pi_1^{-1}(q_{12})=\pi_1^{-1}(F_1\cap F_2)$ consists of $(x,y,z)=(0,1,1),(0,1,\zeta),(0,1,\zeta^2)$.

Hence if we set $p_0=(0,1,1)$, we can construct an analytic family of $(2,1)$-cycles $\xi = \{\xi_{\lambda}\}_{\lambda\in T}$ such that $\widetilde{\Xi}_\lambda$ coincides with the set $\{\xi,\sigma_*(\xi),\sigma^2_*(\xi)\}$ up to decomposable cycles.

\subsection{A rational map}
In this section, we define a rational map from a direct product of curves to the $K3$ surface $\X_\lambda$ to relate the periods of $K3$ surfaces to periods of curves.

For each $\lambda \in T$, we define the affine curves $C^\circ_\lambda$ and $E$ by the equations
\begin{equation}
\begin{aligned}
C^\circ_\lambda :&\quad s^3 = t^2(1-t)(1-\lambda t) \\
E:&\quad u^3=v(v-1)
\end{aligned}
\end{equation}
Let $C_\lambda, E$ be the smooth curves which are birational to $C^\circ_\lambda, E$, respectively.
By the equation (\ref{affineequation}) of an affine open subset $\X_\lambda$, we have the following morphism $\varphi$.
\begin{equation}\label{phi}
\begin{tikzcd}
\varphi:C^\circ_\lambda \times E^\circ \arrow[r] & \overline{\X}_\lambda; \quad (s,t,u,v) \arrow[r,mapsto] & (x,y,z) =  (t,vt,us)
\end{tikzcd}
\end{equation}
Then $\varphi$ induces a rational map $C_\lambda\times E\dashrightarrow \X_\lambda$.
We denote this rational map by the same symbol $\varphi$.

We have a natural $\mu_3$-action on $C^\circ_\lambda\times E^\circ$ given by 
\begin{equation}
\begin{tikzcd}
\mu_3\times C^\circ_\lambda\times E^\circ \arrow[r] & C_\lambda^\circ \times E^\circ; \quad (\zeta,s,t,u,v)\arrow[r,mapsto] & (\zeta s,t,\zeta^{-1}u,v).
\end{tikzcd}
\end{equation}
This $\mu_3$-action lifts to $C_\lambda\times E$.
Then $\varphi$ induces the birational map between $(C_\lambda\times E)/\mu_3$ and $\X_\lambda$.

Let $\overline{Z}_\lambda \rightarrow (C_\lambda\times E)/\mu_3$ be the resolution of singularities.
Since $\X_\lambda$ is a $K3$ surface, hence it is minimal, so the birational map $\psi: \overline{Z}_{\lambda}\dashrightarrow \X_\lambda$ induced by $\varphi$ is a birational \textit{morphism}, so it is a composition of blowing-ups.
Let $Z_\lambda$ is the fiber product of $\overline{Z}_\lambda\rightarrow (C_\lambda\times E)/\mu_3$ and the quotient map $C_\lambda\times E \rightarrow (C_\lambda\times E)/\mu_3$.
We denote the natural morphisms $Z_\lambda \rightarrow C_\lambda\times E$ and $Z_\lambda \rightarrow \overline{Z}_\lambda$ by $b$ and $p$, respectively.
Since the quotient map $C_\lambda\times E\rightarrow (C_\lambda\times E)/\mu_3$ is flat, $b:Z_\lambda \rightarrow C_\lambda\times E$ is also a blowing-up.
Furthermore, we have the $\mu_3$-action on $C_\lambda\times E$ lifts to $Z_\lambda$ and $p: Z_\lambda \rightarrow \overline{Z}_\lambda$ coincides with the quotient map of this $\mu_3$-action.

Thus we have constructed following varieties, morphisms and a birational map.
\begin{equation}\label{morphisms}
\begin{tikzcd}
&[-16pt] Z_\lambda \arrow[d,"p","\text{quotient}"'] \arrow[r,"\text{blow-up}"',"b"] &[24pt] C_\lambda \times E \arrow[d,dashrightarrow,"\varphi"] \\
Z_\lambda/\mu_3 \arrow[r,equal]& \overline{Z}_\lambda \arrow[r,"\text{blow-up}"',"\psi"] &  \X_\lambda
\end{tikzcd}
\end{equation}

\subsection{The Picard-Fuchs differential equation}
Let $\omega\in \Gamma(\X, \Omega_{\X/T}^2)$ be the relative 2-form defined by 
\begin{equation}
\omega = \dfrac{dx\wedge dy}{z^2}.
\end{equation}
For $\lambda\in T$, we denote the pull-back of $\omega$ to $\X_\lambda$ by $\omega_\lambda$.
Let $\eta_\lambda=tdt/s^2$ and $\theta=du/v$ be the regular 1-forms on $C_\lambda$ and $E$, respectively.
Under the morphisms in (\ref{morphisms}), we have 
\begin{equation}\label{formrelation}
b^*(pr_1^*(\eta_\lambda)\wedge pr_2^*(\theta)) = (\psi\circ p)^*(\omega_\lambda).
\end{equation}
We define the subgroups $\Pc(\eta_\lambda)$ and $\Pc(\theta)$ of $\C$ by
\begin{equation}
\begin{aligned}
&\Pc(\eta_\lambda) = \left\{\int_\gamma \eta_\lambda : \gamma\text{ is a topological 1-cycle on $C_\lambda$.}\right\},\\
&\Pc(\theta) = \left\{\int_\delta \theta : \gamma\text{ is a topological 1-cycle on $E$.}\right\}
\end{aligned}
\end{equation}
Then we can represent periods of $\X_\lambda$ with respect to $\omega_\lambda$ as follows.
\begin{prop}\label{decomp}
For $\lambda\in T$, let $\Pc(\omega_\lambda)\subset \C$ be the subgroup defined in $(\ref{perdef})$.
Then $\Pc(\omega)$ is contained in the subgroup generated by $\frac{1}{3}c_1c_2\:\: (c_1\in \Pc(\eta_\lambda), c_2\in \Pc(\theta))$.
\end{prop}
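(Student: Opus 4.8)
The plan is to transport 2-cycles on $\X_\lambda$ to 2-cycles on $C_\lambda\times E$ via the birational correspondence in (\ref{morphisms}), compute their periods there using the product structure, and then keep track of the factor of $3$ coming from the degree of the $\mu_3$-quotient.

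First I would recall that since $\X_\lambda$ is a smooth projective surface, $\Pc(\omega_\lambda)$ is the image of $H_2(\X_\lambda,\Z)$ under integration of $\omega_\lambda$, and that the birational morphism $\psi:\overline{Z}_\lambda\to\X_\lambda$ (a composition of blow-ups) induces an isomorphism on $H^{2,0}$ and a surjection $H_2(\overline{Z}_\lambda,\Z)\twoheadrightarrow H_2(\X_\lambda,\Z)$ compatible with the pairing; hence $\Pc(\omega_\lambda)=\Pc((\psi\circ p)^*\omega_\lambda \text{ on } \overline{Z}_\lambda)$, and pulling back further along the finite flat degree-$3$ blow-up $b:Z_\lambda\to C_\lambda\times E$ together with the $\mu_3$-quotient $p:Z_\lambda\to\overline{Z}_\lambda$, every topological $2$-cycle $\Gamma$ on $\overline{Z}_\lambda$ satisfies $\int_\Gamma (\psi\circ p)^*\omega_\lambda = \frac{1}{3}\int_{p^*\Gamma} b^*(pr_1^*\eta_\lambda\wedge pr_2^*\theta)$, using (\ref{formrelation}) and the fact that $p$ has degree $3$. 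Since $b$ is a blow-up it also induces a surjection on $H_2$ and an isomorphism on the relevant transcendental part, so it suffices to understand periods of $pr_1^*\eta_\lambda\wedge pr_2^*\theta$ against $H_2(C_\lambda\times E,\Z)$.

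Next I would invoke the Künneth decomposition $H_2(C_\lambda\times E,\Z)\cong \big(H_0(C_\lambda)\otimes H_2(E)\big)\oplus\big(H_1(C_\lambda)\otimes H_1(E)\big)\oplus\big(H_2(C_\lambda)\otimes H_0(E)\big)$. The form $pr_1^*\eta_\lambda\wedge pr_2^*\theta$ is a $(1,0)\boxtimes(1,0)$-type form, so it pairs to zero against the outer two summands (these are spanned by classes of the form $[\text{pt}]\times E$ and $C_\lambda\times[\text{pt}]$, on which a wedge of two $1$-forms pulled back from the two factors restricts to zero), and on the middle summand, for $\gamma\otimes\delta$ with $\gamma\in H_1(C_\lambda,\Z)$, $\delta\in H_1(E,\Z)$, Fubini gives $\int_{\gamma\times\delta} pr_1^*\eta_\lambda\wedge pr_2^*\theta = \pm\big(\int_\gamma\eta_\lambda\big)\big(\int_\delta\theta\big)\in \Pc(\eta_\lambda)\cdot\Pc(\theta)$. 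Therefore $\Pc((\psi\circ p)^*\omega_\lambda)$ — and hence $\Pc(\omega_\lambda)$ — lies in the subgroup generated by $\frac{1}{3}c_1c_2$ with $c_1\in\Pc(\eta_\lambda)$, $c_2\in\Pc(\theta)$, which is the claim.

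The main obstacle I anticipate is bookkeeping at the level of $2$-cycles rather than cohomology: one must be careful that $p^*\Gamma$ for a singular $2$-cycle $\Gamma$ on $\overline{Z}_\lambda$ is again a genuine topological $2$-cycle (it is, since $p$ is a finite flat map of smooth varieties, so pullback of cycles is defined) and that the degree-$3$ factor enters exactly once; the cleanest route is to argue entirely homologically — $p_*p^* = 3\cdot\id$ on $H_2$ with $\Q$-coefficients, and $p^*(\psi^*\omega_\lambda) = b^*(pr_1^*\eta_\lambda\wedge pr_2^*\theta)$ by (\ref{formrelation}) — so that for any $\Gamma\in H_2(\overline Z_\lambda,\Z)$ one has $3\int_\Gamma\psi^*\omega_\lambda = \int_{p^*\Gamma}p^*\psi^*\omega_\lambda = \int_{p^*\Gamma}b^*(pr_1^*\eta_\lambda\wedge pr_2^*\theta)$. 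A secondary point is checking that $\psi$ and $b$ being blow-ups really do give the needed surjectivity on $H_2$ and triviality of the exceptional contributions to periods of a holomorphic $2$-form; this is standard (exceptional divisors are rational, so $\omega$ restricts to zero on their desingularizations, and blow-up induces a split surjection on $H_2$) but should be stated explicitly.
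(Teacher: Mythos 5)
Your proposal is correct and follows essentially the same route as the paper: the paper packages your cycle-transport as the morphism of Hodge structures $\phi=(\psi\circ p)_*\circ b^*$, uses $(\psi\circ p)_*(\psi\circ p)^*=3$ together with (\ref{formrelation}) to get $3[\omega_\lambda]=\phi([pr_1^*\eta_\lambda\wedge pr_2^*\theta])$, and then pairs with $\phi^\vee[\Gamma]$ via the K\"unneth projection onto $H_1(C_\lambda)\otimes H_1(E)$ --- exactly your argument in dual form. The only blemish is the mislabeling of $b$ as ``finite flat degree-$3$'' (it is the birational blow-up; $p$ is the degree-$3$ quotient), which you correct yourself in the final paragraph.
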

\begin{proof}
We have the following morphism of $\Z$-Hodge structures.
\begin{equation*}
\begin{tikzcd}
\phi: H^2(C_\lambda \times E)\arrow[r,"b^*"] &H^2(Z_\lambda) \arrow[r,"(\psi\circ p)_*"] &[10pt]H^2(\X_\lambda)
\end{tikzcd}
\end{equation*}
Since $\psi\circ p$ is generically $3:1$, so $(\psi\circ p)_*\circ (\psi\circ p)^*$ coincides with multiplication by $3$ (cf. \cite{Voi02}, Remark 7.29).
Hence we have $3[\omega_\lambda] = \phi([pr_1^*(\eta_\lambda)\wedge pr_2^*(\theta)])$ by the relation (\ref{formrelation}).
On the other hand, we have the following map between singular homologies.
\begin{equation}\label{singhom}
\begin{tikzcd}
H_2(\X_\lambda,\Z) \arrow[r,"\phi^\vee"] & H_2(C_\lambda \times E,\Z) \arrow[r,twoheadrightarrow] & H_1(C_\lambda,\Z)\otimes_{\Z} H_1(E,\Z),
\end{tikzcd}
\end{equation}
where the first map is the dual of $\phi$ and the second map is the projection induced by the K\"unneth formula.
For a 2-cycle $\Gamma$ on $\X_\lambda$, we denote the image of $[\Gamma]\in H_2(\X_\lambda,\Z)$ under the map (\ref{singhom}) by $\sum_{i,j}c_{i,j}[\gamma_i]\otimes [\delta_j]$, where $\gamma_i$ and $\delta_j$ are 1-cycles on $C_\lambda$ and $E$, respectively.
Then we have 
\begin{equation*}
\int_{\Gamma}\omega_\lambda = \langle[\Gamma],[\omega_\lambda] \rangle = \frac{1}{3}\langle[\Gamma],\phi([pr_1^*(\eta_\lambda)\wedge pr_2^*(\theta)])\rangle = \dfrac{1}{3}\sum_{i,j}c_{i,j}\left(\int_{\gamma_i}\eta_\lambda\right)\left(\int_{\delta_j}\theta\right).
\end{equation*}
Since $\displaystyle\int_{\gamma_j}\in \Pc(\eta_\lambda)$ and $\displaystyle\int_{\delta_j}\theta\in \Pc(\theta)$, we have the result.
\end{proof}

Then we can find the Picard-Fuchs differential operator with respect to $\omega$ (i.e. the differential operator which annihilates the period functions with respect to $\omega$) as follows.

\begin{prop}\label{PFop}
Let $\Dc:\O_T^\an \rightarrow \O_T^\an$ be the differential operator defined by
\begin{equation*}
\Dc_\lambda = \lambda(1-\lambda)\dfrac{d^2}{d\lambda^2} + \left(1-\frac{7}{3}\lambda\right)\dfrac{d}{d\lambda} -\frac{4}{9}.
\end{equation*}
Then for any local section $p$ of $\Pc(\omega)$, we have $\Dc_\lambda(p)=0$.
\end{prop}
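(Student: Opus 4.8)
The plan is to reduce the statement about periods of $\omega_\lambda$ to a purely one-dimensional computation via Proposition \ref{decomp}, and then combine the Picard--Fuchs equations of the two curves $C_\lambda$ and $E$. By Proposition \ref{decomp} every local section of $\Pc(\omega)$ is a $\Z$-linear combination of products $\frac{1}{3}c_1(\lambda)c_2$, where $c_1(\lambda)$ is a period function of $\eta_\lambda$ on $C_\lambda$ and $c_2\in \Pc(\theta)$ is a constant (independent of $\lambda$, since $E$ does not vary). Since $\Dc_\lambda$ is $\C$-linear and kills constants appropriately, it suffices to show that $\Dc_\lambda$ annihilates every period function $c_1(\lambda)=\int_{\gamma}\eta_\lambda$ of the family of curves $C_\lambda:\ s^3=t^2(1-t)(1-\lambda t)$ with respect to the $1$-form $\eta_\lambda=t\,dt/s^2$.

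First I would analyze the family $C_\lambda$ directly. Setting $s^3=t^2(1-t)(1-\lambda t)$ and $\eta_\lambda = t\,dt/s^2$, the period integrals $\int \eta_\lambda$ are (up to a rational change of variables) of the shape $\int t^{\,\alpha}(1-t)^{\beta}(1-\lambda t)^{\gamma}\,dt$ for suitable exponents with denominator $3$ — indeed, writing $s = t^{2/3}(1-t)^{1/3}(1-\lambda t)^{1/3}$ one gets $\eta_\lambda = t^{-1/3}(1-t)^{-2/3}(1-\lambda t)^{-2/3}\,dt$. Integrals of this Euler type satisfy the Gauss hypergeometric differential equation $\lambda(1-\lambda)G'' + (c-(a+b+1)\lambda)G' - abG = 0$ with parameters read off from the exponents; here the exponent pattern gives $a=b=2/3$ and $c=1$, so the relevant operator is exactly $\lambda(1-\lambda)\frac{d^2}{d\lambda^2}+\left(1-\frac{7}{3}\lambda\right)\frac{d}{d\lambda}-\frac{4}{9}$, which is $\Dc_\lambda$. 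I would make this precise either by the classical Euler-integral representation of ${}_2F_1$ and its ODE, or — more self-containedly — by the Griffiths--Dwork method: express $\frac{d}{d\lambda}\eta_\lambda$ and $\frac{d^2}{d\lambda^2}\eta_\lambda$ as elements of the de Rham cohomology of the affine curve, reduce pole order using the relation $0 = d(\text{something})$ coming from $s^3 = t^2(1-t)(1-\lambda t)$, and check that the combination $\Dc_\lambda\eta_\lambda$ is exact, hence integrates to $0$ over any cycle $\gamma$. One must also verify that differentiation under the integral sign is legitimate, i.e. that $\gamma$ can be chosen in a locally constant family (Gauss--Manin), which is standard.

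Having established $\Dc_\lambda(c_1)=0$ for every period function $c_1$ of $(C_\lambda,\eta_\lambda)$, the conclusion for $\omega$ is immediate: a local section $p$ of $\Pc(\omega)$ has the form $p(\lambda)=\frac{1}{3}\sum_k n_k c_1^{(k)}(\lambda)\,c_2^{(k)}$ with $n_k\in\Z$, $c_2^{(k)}$ constant; applying $\Dc_\lambda$ and using linearity gives $\Dc_\lambda(p)=\frac{1}{3}\sum_k n_k c_2^{(k)}\,\Dc_\lambda(c_1^{(k)})=0$. (Strictly, a local section of the sheaf $\Pc(\omega)$ is an integral linear combination of such products only \'etale-locally / after passing to a small disk, which is exactly the setting in which $\Dc_\lambda$ needs to kill it, so this is harmless.)

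The main obstacle I anticipate is the bookkeeping in identifying the precise hypergeometric parameters — i.e. confirming that the Griffiths--Dwork pole reduction for $C_\lambda$ yields exactly the coefficients $\left(1-\frac{7}{3}\lambda\right)$ and $-\frac{4}{9}$ rather than some Möbius-equivalent normalization, and handling the behavior at the cusp $t=\infty$ and at the ramification points of $C_\lambda\to\P^1_t$ so that the cohomology class computations are on the smooth complete curve $C_\lambda$ and not just the affine model. This is a finite, explicit computation with no conceptual difficulty, but it is where an error would most easily creep in; cross-checking against the known ${}_2F_1(2/3,2/3;1;\lambda)$ solution (which the author already invokes for the $K3$ periods in the case $r=18$) provides a useful sanity check.
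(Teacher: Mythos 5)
Your proposal is correct and follows essentially the same route as the paper: reduce via Proposition \ref{decomp} to showing $\Dc_\lambda$ kills the period functions $\int_\gamma\eta_\lambda$ of $C_\lambda$, then conclude by linearity since the $E$-periods are constants. The paper carries out your "check that $\Dc_\lambda\eta_\lambda$ is exact" step by simply exhibiting an explicit primitive $H(t,\lambda)=-\tfrac{2}{3}t^{2/3}(1-t)^{1/3}(1-\lambda t)^{-5/3}$ (a rational function on $C_\lambda$) with $\partial_t H = \Dc_\lambda\bigl(t^{-1/3}(1-t)^{-2/3}(1-\lambda t)^{-2/3}\bigr)$, which is exactly the outcome of the Griffiths--Dwork reduction you propose and confirms your identification of the parameters $a=b=2/3$, $c=1$.
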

\begin{proof}
Let $\Pc(\eta)$ be the subsheaf of $\O_T^\an$ (as a sheaf abelian groups) generated by period functions with respect to $\eta$.
Note that the 1-form $\eta_{\lambda}$ can be identified as 
\begin{equation}
\eta_{\lambda} = \frac{dt}{t^{2/3}(1-t)^{2/3}(1-\lambda t)^{2/3}}
\end{equation}
as a multivalued 1-form.
Let $H(t,\lambda)$ be the multivalued holomorphic function $H(t,\lambda) = -\frac{2}{3}t^{2/3}(1-t)^{1/3}(1-\lambda t)^{-5/2}$. 
Then by the direct calculation, we can check that 
\begin{equation}\label{potential}
\dfrac{\del}{\del t}H(t,\lambda) = \left(\lambda(1-\lambda)\dfrac{\del^2}{\del \lambda^2} +\left(1-\frac{7}{3}\lambda\right)\dfrac{\del}{\del \lambda} -\frac{9}{4}\right)\dfrac{1}{t^{1/3}(1-t)^{2/3}(1-\lambda t)^{2/3}}.
\end{equation}
This implies that for any 1-cycle $\gamma$ on $C_\lambda$, we have
\begin{equation}\label{ann}
0 = \lambda(1-\lambda)\dfrac{\del^2}{\del \lambda^2}\int_{\gamma}\eta_{\lambda} + \left(1-\frac{7}{3}\lambda\right)\dfrac{\del}{\del \lambda}\int_{\gamma}\eta_{\lambda} -\frac{9}{4}\int_{\gamma}\eta_\lambda.
\end{equation}
This implies that the differential operator $\Dc_\lambda$ annihilates any sections of $\Pc(\eta)$.
For a sufficiently small open ball $V$ in $T$, we can take a basis $p_1,p_2,\dots, p_n\in \Pc(\eta)(V)$ of $\Pc(\eta)$.
Then by Proposition \ref{decomp} and the identity theorem, for any section $p\in \Pc(\omega)(V)$, there exists $c_1,c_2,\dots, c_n\in \Pc(\theta)$ such that $p=\frac{1}{3}\sum_{i=1}^n c_ip_i$.
Then by (\ref{ann}), $p$ satisfies $\Dc_\lambda(p) = 0$. 
Hence we get the result.
\end{proof}

Since $\Dc_\lambda$ annihilates all sections of $\Pc(\omega)$, the differential operator $\Dc_\lambda$ factors the sheaf $\Qc(\omega)$.
Thus we have the following morphism of sheaves of abelian groups.
\begin{equation}\label{DCQ}
\Dc_\lambda: \Qc(\omega) \lra \O_T^\an
\end{equation}

\subsection{Calculation of the regulator map}
In this section, we fix $\lambda \in\R$ such that $0<\lambda <1$ and compute the image of the $(2,1)$-cycle $\xi_\lambda$ under the transcendental regulator map.

Let $B_1$ be the subvariety $x=y$ on $Y$ and for $i=1,2$, set $q_i=B_1\cap F_i$ and $q_{12}=F_1\cap F_2$.
Let $D_1$ be the unique (integral) subvariety on $\X_\lambda$ such that $\pi(D_1)=B_1$
For $i=1,2$, $C_i$ denotes the strict transformation of $\pi_1^{-1}(F_i)$ and $E_{q_i}$ denotes the component of exceptional divisors over $q_i$ such that $E_{q_i}\cap D_1\neq  \emptyset$.
Then $E_{q_i}$ and $F_i$ (resp. $E_{q_i}$ and $B_1$) intersect at one point, so we denote this point $p_{i,0}$ (resp. $p_{i,1}$.
Note that we have $p_{i,0}\neq p_{i,1}$.
Let $p_0\in \X_\lambda$ be the point which is the unique lift of $(x,y,z)=(0,1,1)\in \overline{\X}_\lambda$.
For $i=1,2$, we set $p_i = \sigma^i_*(p_0)$.
Then we have $C_1\cap C_2 = \{p_0,p_1,p_2\}$.

Let $\Gamma = \{(x,y)\in \R^2\: |\: 0\le x\le y\le 1\}$ and $\Gamma^\circ$ be its interior.
We regard $\Gamma$ as a 2-chain on $\P^1\times \P^1$.
(See Figure. \ref{blambda}.)
Then we have the following.
\begin{prop}\label{2chainprop}
The image of $\xi_{\lambda} \in\CH^2(\X_\lambda,1)$ under the transcendental regulator map is given as follows:
\begin{equation}
\Phi(\xi_{\lambda})([\omega_\lambda]) \equiv (1-\zeta)\int_{\Gamma^\circ} \dfrac{dx\wedge dy}{\left\{y(y-x)(1-x)(1-\lambda x)\right\}^{2/3}} \mod \Pc(\omega_\lambda)
\end{equation}
where $\left\{y(y-x)(1-x)(1-\lambda x)\right\}^{2/3}$ denotes the square of the positive cubic root of $y(y-x)(1-x)(1-\lambda x)\in \R_{>0}$.
\end{prop}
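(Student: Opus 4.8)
The plan is to apply the formula (\ref{transregval}) for the transcendental regulator map, so the task reduces to identifying an explicit $2$-chain $\Gamma$ associated with $\xi_\lambda$ and evaluating $\int_\Gamma \omega_\lambda$ modulo periods. Recall that $\xi_\lambda$ is represented by $(C_1,f_1)+(C_2,f_2)$ with $\div_{C_1}(f_1)=-\div_{C_2}(f_2)=p_1-p_0$, where $p_0,p_1\in C_1\cap C_2$. First I would describe the $1$-chains $\gamma_1,\gamma_2$ on the normalizations of $C_1,C_2$: regarding $f_i$ as a finite map $C_i\to\P^1$, $\gamma_i$ is the pull-back of the path $[\infty,0]$. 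Since $\div f_1 = p_1 - p_0$ and $\div f_2 = p_0 - p_1$, the $1$-cycle $\gamma=(\mu_1)_*\gamma_1+(\mu_2)_*\gamma_2$ on $\X_\lambda$ is closed, and I expect it to be (homologous to) the push-forward of a loop lying over $F_1\cup F_2$ through $p_0$ and $p_1$. The key geometric input is that $p_0,p_1$ lie over $q_{12}=F_1\cap F_2$, while the other relevant points $p_{i,0},p_{i,1}$ lie over $q_i=B_1\cap F_i$; the boundary of the region $\Gamma$ on $\P^1\times\P^1$ consists precisely of segments along $F_1$ ($x=0$), along $F_2$... wait, rather along $x=0$, $y=1$, and the diagonal $x=y$, connecting these marked points. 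The strategy is to show that a suitable lift $\widetilde\Gamma\subset\X_\lambda$ of $\Gamma$ (a branch of $\pi^{-1}(\Gamma)$, which makes sense because $\Gamma^\circ$ misses the branch locus $B$) has boundary homologous to $\gamma$, up to $1$-cycles supported on the rational curves $C_i$, $D_1$ and the exceptional divisors $E_{q_i}$ — and such cycles contribute nothing to $\int(\cdot)\omega_\lambda$ since they are algebraic, hence of type $(1,1)$, orthogonal to $H^{2,0}$.

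Concretely, I would argue as follows. Over $\Gamma^\circ$ the cover $\pi$ is unramified, so $\pi^{-1}(\Gamma^\circ)$ is a disjoint union of three copies of $\Gamma^\circ$ permuted by $\sigma$; let $\widetilde\Gamma$ be the closure of the copy containing the lift of an interior point consistent with the branch $z=\{y(y-x)(1-x)(1-\lambda x)\}^{1/3}$ chosen in the statement. Its boundary $\partial\widetilde\Gamma$ is a lift of $\partial\Gamma$. Along the edge $x=0$ (inside $F_1$) and the edge $y=1$ (inside $F_2$), the lift runs along $C_1$ and $C_2$ respectively, from $p_0$-type to $q_i$-type points; along the diagonal edge $x=y$ it runs along $D_1$. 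I would then check that $\partial\widetilde\Gamma - \gamma$ is a $1$-cycle supported on $C_1\cup C_2\cup D_1\cup E_{q_1}\cup E_{q_2}$, each component rational, hence this difference is a $\Q$-linear combination of algebraic $1$-cycle classes; pairing with $[\omega_\lambda]\in H^{2,0}$ gives zero. To pin down the constant, I would compare the boundary orientations: the two copies of $\Gamma$ over the edges $x=0$ and $y=1$ are traversed by $\gamma_1,\gamma_2$, and the three sheets differ by the deck transformation $z\mapsto\zeta z$; the combination $(C_1,f_1)+(C_2,f_2)$ with $\div f_i = \pm(p_1-p_0)$ forces the coefficient $1-\zeta$ to appear when we express $\gamma$ as the boundary of a single sheet $\widetilde\Gamma$ rather than a $\sigma$-symmetric combination (the class $p_1-p_0 = \sigma_*(p_0)-p_0$ and the relation $1+\sigma_*+\sigma_*^2$ acting trivially on the relevant homology yield the factor $(1-\zeta)$ after pairing with the eigenform $\omega_\lambda$, on which $\sigma^*$ acts by $\zeta$).

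Finally, transporting $\int_{\widetilde\Gamma}\omega_\lambda$ back to the base $\P^1\times\P^1$ via $\pi$: on the chosen sheet, $z = \{y(y-x)(1-x)(1-\lambda x)\}^{1/3}$, so
\[
\int_{\widetilde\Gamma}\omega_\lambda = \int_{\widetilde\Gamma}\frac{dx\wedge dy}{z^2} = \int_{\Gamma^\circ}\frac{dx\wedge dy}{\{y(y-x)(1-x)(1-\lambda x)\}^{2/3}},
\]
using $\pi^*(dx\wedge dy) = dx\wedge dy$ and that $\pi$ restricted to the sheet is a homeomorphism onto $\Gamma^\circ$ (orientation-preserving with the natural choices). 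Combining with the factor $1-\zeta$ and reducing modulo $\Pc(\omega_\lambda)$ gives the asserted formula. The main obstacle I anticipate is the bookkeeping in the second paragraph: verifying rigorously that $\partial\widetilde\Gamma$ differs from $\gamma$ only by cycles on rational curves, and getting the scalar $1-\zeta$ (rather than, say, $\zeta-\zeta^2$ or $1-\zeta^2$) correct — this requires careful tracking of which sheet of the cover each edge of $\partial\Gamma$ lifts to and of the precise identification $\div f_i = p_1-p_0$ together with the $\sigma^*$-eigenvalue of $\omega_\lambda$. Everything else is a routine unramified-cover pull-back computation together with the general vanishing of the algebraic part against $H^{2,0}$.
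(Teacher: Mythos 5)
Your overall strategy --- produce an explicit $2$-chain associated with $\xi_\lambda$ by lifting the real triangle $\Gamma$, discard contributions supported on curves, and extract the factor $1-\zeta$ from the $\sigma^*$-eigenvalue of $\omega_\lambda$ --- is the same as the paper's. But there is a genuine gap at the central step: you try to realize the associated $2$-chain as a \emph{single} sheet $\widetilde\Gamma$ of $\pi^{-1}(\Gamma)$ plus corrections on curves, asserting that $\partial\widetilde\Gamma-\gamma$ is a $1$-cycle supported on $C_1\cup C_2\cup D_1\cup E_{q_1}\cup E_{q_2}$ which can be ignored. A $1$-cycle supported on that configuration need not bound a $2$-chain \emph{inside} the configuration: the dual graph of these rational curves has loops, and $\partial\widetilde\Gamma$ traverses $D_1$ exactly once, along the open path $\gamma_b$ from $p_{1,1}$ to $p_{2,1}$, whereas $\gamma$ does not meet $D_1$ at all. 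That single traversal cannot be cancelled by boundaries of $2$-chains lying on the curves, so the correction term you need to discard is the integral of $\omega_\lambda$ over a $2$-chain that is \emph{not} supported on curves, and there is no reason for it to vanish or to be a period. Your parenthetical appeal to $1+\sigma_*+\sigma_*^2$ gestures at the fix but does not supply it; note also that pairing a topological $1$-cycle with $[\omega_\lambda]$ is not meaningful --- the vanishing you actually need is that $\omega_\lambda$ restricts to zero on curves, hence integrates to zero over $2$-chains supported on them.

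The paper repairs exactly this point by taking the \emph{difference of two sheets}: labelling $\Gamma_i=\overline{\Gamma_i^\circ}$ so that $p_i\in\Gamma_i$, one has $\Gamma_1=\sigma(\Gamma_0)$, and in $\partial(\Gamma_0-\Gamma_1)$ the two copies of $\gamma_b$ cancel (since $D_1$ lies in the fixed locus of $\sigma$). The remaining discrepancies between $\partial(\Gamma_0-\Gamma_1)$ and $\gamma_1+\gamma_2$ are then \emph{closed} loops on the individual rational curves $C_1$, $C_2$, $E_{q_1}$, $E_{q_2}$, each of which bounds a $2$-chain on that curve, and these integrate to zero against $\omega_\lambda$. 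The scalar then falls out as $\int_{\Gamma_0}\omega_\lambda-\int_{\sigma(\Gamma_0)}\omega_\lambda=(1-\zeta)\int_{\Gamma_0}\omega_\lambda$, and the final identification of $\int_{\Gamma_0}\omega_\lambda$ with the real integral over $\Gamma^\circ$ is as you describe. Without replacing the single sheet by $\Gamma_0-\Gamma_1$, the coefficient $1-\zeta$ cannot be derived and the argument does not close.
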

\begin{proof}
Recall that $\xi_{\lambda}$ is represented by $(C_1,f_1) + (C_2,F_2)$.
For $i=0,1$, let $\gamma_{i}$ be the pull-back of the path $[\infty,0]$ (see Section 2.2) on $\P^1$ by $f_{i}:C_{i}\rightarrow \P^1$.
By the relation (\ref{divrelation}), $\gamma_0$ (resp. $\gamma_1$) is the path from $p_0$ to $p_1$ (resp. $p_1$ to $p_0$).
We will find a 2-chain associated with $\xi_{\lambda}$, in other words, a 2-chain on $\X_{\lambda}$ whose boundary coincides with $\gamma_0+\gamma_1$.
 
Since $\Gamma^\circ$ does not intersects with branching locus, $\pi^{-1}(\Gamma^\circ)$ decompose into the disjoint union of three copies $\Gamma^\circ_0, \Gamma^\circ_1$ and $\Gamma^\circ_2$ of $\Gamma^\circ$.
We name them so that the point $p_i$ is contained in the closure of $\Gamma^\circ_i$.
We define $\Gamma_i = \overline{\Gamma^\circ_i}$ for $i=0,1,2$.
For $i=1,2,3$, we name the paths appearing in the boundary of $\Gamma_i$ as follows.
(See Figure. \ref{top}.)

\begin{figure}
\includegraphics[width = 95mm]{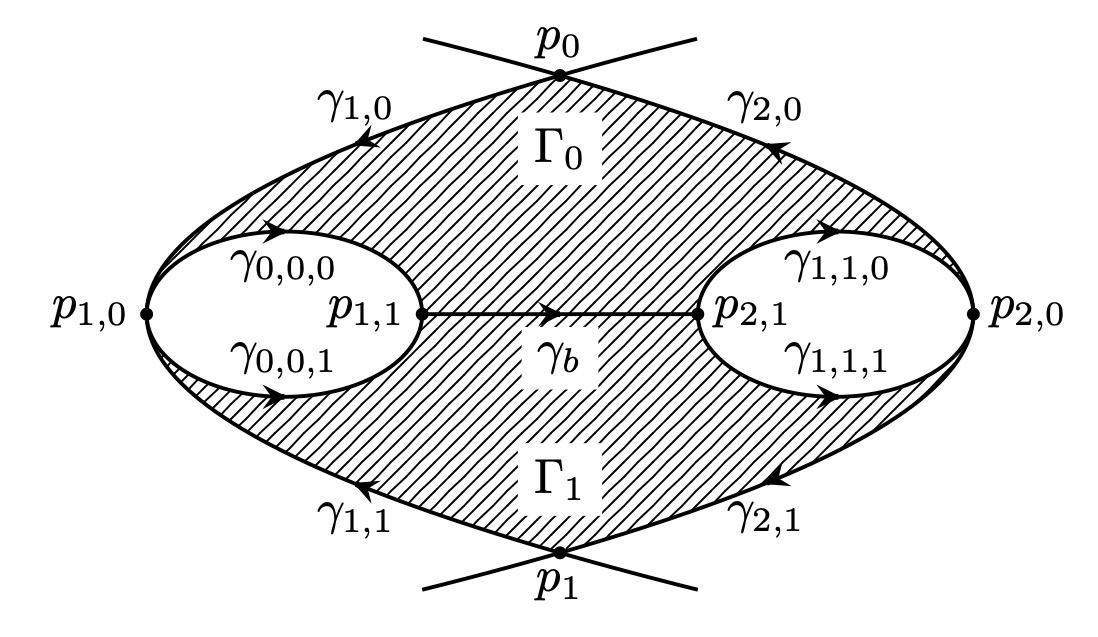}
\caption{The topological 2-chain $\Gamma_0, \Gamma_1$ and paths appearing their boundaries}
\label{top}
\end{figure}
\begin{enumerate}
\item The path $\gamma_{b}$ on $D_1$ is the unique lift of the paths $[0,1]\rightarrow Y; t\mapsto (x,y)=(t,t)$.
This is the path from $p_{1,1}$ to $p_{2,1}$.
\item The path $\gamma_{0,0,i}$ is a path on $E_{q_1}$ from $p_{1,0}$ to $p_{1,1}$.
Using the local coordinates $w=y/z$ on $E_{q_1}$, this can be expressed by $[0,1]\rightarrow E_{q_1}; t\mapsto w=\zeta^{-i} \frac{t}{1-t}$.
\item The path $\gamma_{1,1,i}$ is a path on $E_{q_2}$ from $p_{2,1}$ to $p_{2,0}$. 
Using the local coordinates $w'=z/1-x$ on $E_{q_2}$, this can be expressed by $[0,1]\rightarrow E_{q_2}; t\mapsto w'=\zeta^i \frac{t}{1-t}$.
\item The path $\gamma_{1,i}$ is the path on $C_1$ from $p_i$ to $p_{1,0}$, which is the lift of $[0,1]\rightarrow Y;t\mapsto (x,y)=(0,1-t)$.
\item The path $\gamma_{2,i}$ is the path on $C_2$ from $p_{2,0}$ to $p_{i}$, which is the lift of $[0,1]\rightarrow Y;t\mapsto (x,y)=(1-t,1)$.
\end{enumerate}
These paths satisfies $\del\Gamma_i = \gamma_{1,i} + \gamma_{0,0,i} + \gamma_{b} + \gamma_{1,1,i}+\gamma_{2,i}$ for $i=1,2$.

Since $E_{q_1}$ (resp. $E_{q_2}$) are isomorphic to $\P^1$ and $H_1(\P^1,\Z)=0$, we have a topological 2-chain $\Gamma_{q_1}$ (resp. $\Gamma_{q_2}$) on $E_{q_1}$ (resp. $E_{q_2}$) satisfying $\del \Gamma_{q_1} = \gamma_{0,0,0}-\gamma_{0,0,1}$ (resp. $\del \Gamma_{q_2} = \gamma_{1,1,0}-\gamma_{1,1,1}$).
Similarly, since $C_1$ (resp. $C_2$) are isomorphic to $\P^1$, we have a topological 2-chain $\Gamma_{c,1}$ (resp. $\Gamma_{c,2}$) satisfying $\del\Gamma_{c,1} = \gamma_{1,0}-\gamma_{1,1} - \gamma_{1}$ (resp. $\del\Gamma_{c,2} = \gamma_{2,0}-\gamma_{2,1} - \gamma_{2}$).
Then we have the following equality of 1-cycles on $\X_\lambda$.
\begin{equation}\label{rel2chain}
\del(\Gamma_0-\Gamma_1-\Gamma_{q_1}-\Gamma_{q_2}-\Gamma_{c,1}-\Gamma_{c,2}) = \gamma_{1}+\gamma_{2}
\end{equation}
Thus, by the relation (\ref{rel2chain}), the 2-chain $\Gamma_0-\Gamma_1-\Gamma_{q_1}-\Gamma_{q_2}-\Gamma_{c,1}-\Gamma_{c,2}$ is a 2-chain associated with $\xi_{\lambda}$.
By the formula (\ref{transregval}), we have 
\begin{equation}\label{regval1}
\Phi(\xi_{\lambda})([\omega_\lambda]) \equiv \int_{\Gamma_0}\omega_\lambda - \int_{\Gamma_1}\omega_\lambda -\int_{\Gamma_{q_1}}\omega_\lambda -\int_{\Gamma_{q_2}}\omega_\lambda -\int_{\Gamma_{c,1}}\omega_\lambda -\int_{\Gamma_{c,2}}\omega_\lambda
\end{equation}
Since $\Gamma_{q_1}, \Gamma_{q_2}, \Gamma_{c,1}$ and $\Gamma_{c,2}$ are 2-chains on 1-dimensional subvarieties, the restriction of $\omega_\lambda$ vanishes.
Furthermore, since $\Gamma_1 = \sigma(\Gamma_0)$ and $\sigma^*(\omega_\lambda) = \zeta\omega_{\lambda}$, the righthand side of (\ref{regval1}) can be computed as
\begin{equation}
\int_{\Gamma_0}\omega_{\lambda} - \int_{\Gamma_1}\omega_{\lambda}  =  \int_{\Gamma_0}\omega_{t} -  \int_{\Gamma_0}\zeta\omega_{\lambda} = (1-\zeta)\int_{\Gamma_0}\omega_{\lambda}.
\end{equation}
Since the integration of $\omega_{\lambda}$ on $\Gamma_0-\Gamma^\circ_0$ is 0, we have 
\begin{equation}
 (1-\zeta)\int_{\Gamma_0}\omega_{\lambda} = (1-\zeta)\int_{\Gamma_0^\circ}\omega_\lambda = (1-\zeta)\int_{\Gamma^\circ}\dfrac{dx\wedge dy}{\left\{y(y-x)(1-x)(1-\lambda x)\right\}^{2/3}}.
\end{equation}
Combining the above equations, we get the result.
\end{proof}

For $\lambda\in \R$, such that $0<\lambda < 1$, we define the function $G(\lambda)$ by the improper integral
\begin{equation}\label{Glambda}
G(\lambda) = (1-\zeta)\int_{\Gamma^\circ}\dfrac{dx\wedge dy}{\left\{y(y-x)(1-x)(1-\lambda x)\right\}^{2/3}}.
\end{equation}
Note that the integration converges by Proposition \ref{2chainprop}.
Then we see that the function $G(\lambda)$ satisfies the following.

\begin{prop}\label{diffeqnG}
We can extend $G(\lambda)$ to the multivalued holomorphic function on $T$ satisfying the differential equation
\begin{equation}\label{diffeqnGeqn}
\Dc_\lambda(G(\lambda))=  \frac{1-\zeta}{\lambda-1}
\end{equation}
where $\Dc_\lambda$ is the Picard-Fuchs differential operator in Proposition \ref{PFop}.
\end{prop}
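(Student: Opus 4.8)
The plan is to establish the differential equation \eqref{diffeqnGeqn} by differentiating under the integral sign, using the "potential" identity \eqref{potential} together with a boundary term coming from the locus $\{y=x\}$ in $\partial\Gamma^\circ$. Concretely, write $G(\lambda) = (1-\zeta)\int_{\Gamma^\circ} g(x,y,\lambda)\,dx\wedge dy$ with $g = \{y(y-x)(1-x)(1-\lambda x)\}^{-2/3}$. The first task is to justify that $G(\lambda)$ extends to a multivalued holomorphic function on $T$: for this I would argue that the improper integral converges locally uniformly in $\lambda$ (the integrand has only mild, integrable singularities along the boundary edges $y=0$, $y=x$, $x=1$, as already used in Proposition \ref{2chainprop}), so $G$ is holomorphic on the disk $0<\lambda<1$, and then analytically continue along paths in $T$; the ramification points of $1-\lambda t$ as $t$ ranges over the edges are exactly $\lambda=0,1,\infty$, which is why $T=\P^1-\{0,1,\infty\}$ is the right base.

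Next I would apply $\Dc_\lambda$ to $G$ and move the operator inside the integral (legitimate by the uniform convergence just discussed, plus a standard dominated-convergence estimate for the two $\lambda$-derivatives). Now invoke the key identity: \eqref{potential} says that $\bigl(\lambda(1-\lambda)\partial_\lambda^2 + (1-\tfrac73\lambda)\partial_\lambda - \tfrac94\bigr)$ applied to $t^{-1/3}(1-t)^{-2/3}(1-\lambda t)^{-2/3}$ equals $\partial_t H(t,\lambda)$. The subtlety is that this identity is stated for the \emph{one}-variable integrand $\eta_\lambda$ (with exponent $1/3$ on $t$), whereas our two-variable integrand $g$ has a different shape. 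The right move is to perform the inner integration in $y$ first: for fixed $x$, $\int g\,dy = \int_x^1 \{y(y-x)\}^{-2/3}\,dy \cdot \{(1-x)(1-\lambda x)\}^{-2/3}$, and the $y$-integral is a constant (independent of $\lambda$, and by a change of variable $y = x/(1-w)$ or similar it reduces to a Beta-type constant times a power of $x$). So after integrating out $y$ we are left, up to a universal constant, with $G(\lambda) = \mathrm{const}\cdot(1-\zeta)\int_0^1 x^{c}\{(1-x)(1-\lambda x)\}^{-2/3}\,dx$ for an appropriate exponent $c$; the point of the change of variables is to arrange that this matches the one-variable integrand $t^{-1/3}(1-t)^{-2/3}(1-\lambda t)^{-2/3}$ appearing in \eqref{potential}, possibly after absorbing the $y$-integration. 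Applying \eqref{potential} then turns $\Dc_\lambda G$ into $(1-\zeta)\cdot\mathrm{const}\cdot\bigl[H(x,\lambda)\bigr]_{x=0}^{x=1}$, a pure boundary term.

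Finally I would evaluate that boundary term. From $H(t,\lambda) = -\tfrac23 t^{2/3}(1-t)^{1/3}(1-\lambda t)^{-5/3}$ (note: the exponent $-5/2$ in the excerpt should read $-5/3$), the value at $t=0$ vanishes because of the factor $t^{2/3}$, while at $t=1$ the factor $(1-t)^{1/3}$ vanishes \emph{unless} it is cancelled — so the surviving contribution comes precisely from the endpoint where the $(1-x)$ power in the original integrand degenerates, i.e. the corner of $\Gamma^\circ$ on $x=1$, $y=1$, which is where the curve $1-x$ meets $1-\lambda x$; the residual $(1-\lambda)^{-1}$ is exactly the pole in the right-hand side of \eqref{diffeqnGeqn}. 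The main obstacle I anticipate is bookkeeping the exponents through the $y$-integration and the change of variables so that \eqref{potential} applies verbatim, and carefully tracking which boundary pieces of $\Gamma^\circ$ contribute nonzero limits: the three "harmless" edges must give $0$ and only the interaction of $\{x=1\}$ with $\{1-\lambda x=0\}$ survives, producing the constant $1-\zeta$ over $\lambda-1$. One must also check that the non-integer powers are taken with consistent branches throughout (the positive real cube root on $\Gamma^\circ$), so that no spurious root-of-unity factors enter.
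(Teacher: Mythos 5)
There is a genuine gap at the heart of your argument: the double integral does \emph{not} factor the way you claim. After fixing $x$, the inner integral is $\int_x^1\{y(y-x)\}^{-2/3}\,dy$; substituting $y=xu$ turns this into $x^{-1/3}\int_1^{1/x}\{u(u-1)\}^{-2/3}\,du$, whose upper limit depends on $x$. It is therefore not ``a Beta-type constant times a power of $x$'', and $G(\lambda)$ is not a constant multiple of $\int_0^1 x^{-1/3}(1-x)^{-2/3}(1-\lambda x)^{-2/3}\,dx$. This is not a repairable bookkeeping issue: if the factorization held, $G$ would be a constant multiple of a period function of $\eta_\lambda$ and would satisfy the \emph{homogeneous} equation $\Dc_\lambda(G)=0$ by \eqref{potential} --- indeed your own boundary term $\bigl[H(t,\lambda)\bigr]_{t=0}^{t=1}$ vanishes at both endpoints, and in the factorized picture there is nothing available to cancel the zero of $(1-t)^{1/3}$ at $t=1$. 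The entire content of the proposition is that $G$ is an ``incomplete'' period integral whose inhomogeneity comes precisely from the truncated domain of integration.

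The paper's proof keeps the integral two-dimensional: substituting $y=vx$ gives $G(\lambda)=(1-\zeta)\int_\triangle x^{-1/3}(1-x)^{-2/3}(1-\lambda x)^{-2/3}v^{-2/3}(v-1)^{-2/3}\,dx\wedge dv$ over $\triangle=\{0<x<1,\ 1<v<1/x\}$, so that the $x$-part of the integrand matches \eqref{potential} exactly while the $v$-limits still depend on $x$. Applying $\Dc_\lambda$ under the integral and using \eqref{potential} exhibits the integrand as the exact form $d\bigl(H(x,\lambda)\,v^{-2/3}(v-1)^{-2/3}\,dv\bigr)$; Stokes on $\triangle_\varepsilon$ then reduces everything to boundary integrals, and the only surviving contribution is along the hyperbola $v=1/x$, where the restriction of $v^{-2/3}(v-1)^{-2/3}\,dv$ supplies a factor $(1-x)^{-2/3}$ that cancels the $(1-x)^{1/3}$ zero of $H$. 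The resulting one-variable integral $-\tfrac23(1-\zeta)\int_0^1(1-\lambda x)^{-5/3}(1-x)^{-1/3}\,dx$ evaluates to $\frac{1-\zeta}{\lambda-1}$ via $w=\bigl((1-x)/(1-\lambda x)\bigr)^{2/3}$. Your observation that the exponent $-5/2$ in $H$ should read $-5/3$ is correct, and your discussion of convergence and analytic continuation to $T$ is fine, but the factorization step must be replaced by this genuinely two-dimensional Stokes argument.
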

\begin{proof}
It is enough to show that (\ref{diffeqnGeqn}) holds for $0<\lambda <1$.
By the coordinate transformation $y=vx$, the integral region of the right-hand side of (\ref{Glambda}) is transformed in to $\triangle = \{(x,v)\in \R^2\:|\:0<x<1\text{ and }1<v<1/x\}$, so we have
\begin{equation}\label{sekibun}
G(\lambda) = (1-\zeta)\int_{\triangle}  \dfrac{dx\wedge dv}{x^{1/3}(1-x)^{2/3}(1-\lambda x)^{2/3}v^{2/3}(v-1)^{2/3}}.
\end{equation}
We will consider $\Dc_\lambda(G(\lambda))$.
By the Lebesgue's dominated convergence theorem, we can interchange the action of differential operator $\Dc_\lambda$ and the integration, so we have 
\begin{equation}\label{sekibun2}
\Dc_\lambda(G(\lambda)) = \int_{\triangle} \Dc_\lambda\left(\dfrac{dx\wedge dv}{x^{1/3}(1-x)^{2/3}(1-\lambda x)^{2/3}v^{2/3}(v-1)^{2/3}}\right).
\end{equation}
For a sufficiently small positive real number $\varepsilon$, let $\triangle_\varepsilon$ be the open subset of $\triangle$ defined by $\triangle_\varepsilon = \{(x,v)\in \triangle \:|\: \varepsilon < x < 1-\varepsilon\}$.
Since (\ref{sekibun2}) converges in the sense of Lebesgue, we have
\begin{equation}
= \lim_{\varepsilon \rightarrow 0}\left\{(1-\zeta)\int_{\triangle_\varepsilon}  \Dc_\lambda\left(\dfrac{dx\wedge dv}{x^{1/3}(1-x)^{2/3}(1-\lambda x)^{2/3}v^{2/3}(v-1)^{2/3}}\right)\right\}.
\end{equation}
By the relation (\ref{potential}), we have
\begin{equation}
= \lim_{\epsilon \rightarrow 0} \left\{-\frac{2}{3}(1-\zeta)\int_{\triangle_\varepsilon} d\left(\frac{x^{2/3}(1-x)^{1/3}}{(1-\lambda x)^{5/3}}\cdot \frac{dv}{v^{2/3}(v-1)^{2/3}}\right)\right\}.
\end{equation}
By the Stokes theorem, we have 
\begin{equation}
=  \lim_{\epsilon \rightarrow 0} \left\{ -\frac{2}{3}(1-\zeta)\int_{\del\triangle_{\varepsilon}} \frac{x^{2/3}(1-x)^{1/3}}{(1-\lambda x)^{5/3}}\cdot \frac{dv}{v^{2/3}(v-1)^{2/3}}\right\}.
\end{equation}
When $\varepsilon \rightarrow 0$, the integrand vanishes or converges to $0$ on $\del \triangle_\varepsilon$ except on $v=1/x$, hence we have 
\begin{equation}
=-\frac{2}{3}(1-\zeta)\int_{x=0}^{x=1}\frac{dx}{(1-\lambda x)^{5/3}(1-x)^{1/3}} =\frac{1-\zeta}{1-\lambda} \int_{w=1}^{w=0}dw =  \frac{1-\zeta}{\lambda-1}
\end{equation}
where we use the coordinate transformation $w=(1-x)^{2/3}(1-\lambda x)^{2/3}$ in the second last equality.

\end{proof}
Using this proposition, we can show the following.

\begin{thm}\label{tnonzero}
For a very general $\lambda\in T$, $\Phi(\xi_{\lambda})$ is non-torsion.
\end{thm}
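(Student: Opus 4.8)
The plan is to deduce Theorem~\ref{tnonzero} from Proposition~\ref{diffeqnG} together with Lemma~\ref{importantlemma}, by comparing the inhomogeneous differential equation satisfied by the normal function with the homogeneous one satisfied by the periods. First I would observe that, by Proposition~\ref{2chainprop}, the transcendental regulator value $\Phi(\xi_\lambda)([\omega_\lambda])$ agrees with the value of the multivalued function $G(\lambda)$ of \eqref{Glambda} modulo $\Pc(\omega_\lambda)$; hence the holomorphic section $\nu_{\tr}(\xi)\in \Gamma(T,\Qc(\omega))$ is represented locally by $G(\lambda)$. By Lemma~\ref{importantlemma}, it suffices to prove that this section is non-zero, i.e.\ that $G(\lambda)$ is \emph{not} a local section of $\Pc(\omega)$ (and more precisely that no integer multiple of it is). Applying the map \eqref{DCQ}, $\Dc_\lambda:\Qc(\omega)\to \O_T^{\an}$, to $\nu_{\tr}(\xi)$ gives, by Proposition~\ref{diffeqnG}, the nonzero holomorphic function $\frac{1-\zeta}{\lambda-1}$; since $\Dc_\lambda$ kills all of $\Pc(\omega)$ by Proposition~\ref{PFop}, the image of $\nu_{\tr}(\xi)$ under $\Dc_\lambda$ would be $0$ if $\nu_{\tr}(\xi)$ were zero in $\Qc(\omega)$. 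As $\frac{1-\zeta}{\lambda-1}\not\equiv 0$, we conclude $\nu_{\tr}(\xi)\neq 0$.

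Next I would upgrade this to the non-torsion statement. For any positive integer $m$, the section $m\cdot \nu_{\tr}(\xi)$ satisfies $\Dc_\lambda(m\cdot\nu_{\tr}(\xi)) = m\cdot\frac{1-\zeta}{\lambda-1}\neq 0$, so $m\cdot\nu_{\tr}(\xi)\neq 0$ in $\Gamma(T,\Qc(\omega))$ as well. Hence by Lemma~\ref{importantlemma} there is a very general $\lambda\in T$ at which $\ev_\lambda(m\cdot\nu_{\tr}(\xi))\neq 0$ for every $m$ simultaneously (taking the countable intersection of the very general loci over $m$), which means $\Phi(\xi_\lambda)([\omega_\lambda])$ generates a non-torsion subgroup of $\C/\Pc(\omega_\lambda)$. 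In other words $\Phi(\xi_\lambda)$ is non-torsion for a very general $\lambda$, which is the assertion of the theorem.

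The only subtlety I anticipate is making sure the comparison of differential equations is done in the right sheaf: $G(\lambda)$ is multivalued on $T$, and one must check that its class in the quotient sheaf $\Qc(\omega)$ is a well-defined single-valued section — this is exactly the content of the discussion preceding \eqref{DCQ}, where $\Dc_\lambda$ is shown to descend to $\Qc(\omega)$, so the monodromy ambiguity of $G$ (which lies in $\Pc(\omega)$, being built from period integrals of the curves $C_\lambda$ and $E$ via Proposition~\ref{decomp}) does not affect the argument. A second minor point is the passage from ``non-zero section of $\Qc(\omega)$'' to ``non-torsion value at a very general point'': this requires the countable-intersection argument above together with Lemma~\ref{importantlemma}, and uses that the ambient group $\C/\Pc(\omega_\lambda)$ has no obstruction to this — since $\Dc_\lambda(m\cdot\nu_{\tr}(\xi))$ is a fixed nonzero holomorphic function independent of where we evaluate. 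I do not expect either point to be a genuine obstacle; the real analytic work has already been carried out in Propositions~\ref{PFop} and~\ref{diffeqnG}.

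\begin{proof}[Proof of Theorem~\ref{tnonzero}]
Recall that $\xi$ denotes the analytic family of $(2,1)$-cycles on $\X\to T$ from Section~7.2, so we have a section $\nu_{\tr}(\xi)\in \Gamma(T,\Qc(\omega))$ with $\ev_\lambda(\nu_{\tr}(\xi)) = \Phi(\xi_\lambda)([\omega_\lambda])$ in $\C/\Pc(\omega_\lambda)$ for every $\lambda\in T$. By Proposition~\ref{2chainprop} and the definition \eqref{Glambda} of $G(\lambda)$, for real $\lambda\in (0,1)$ the section $\nu_{\tr}(\xi)$ is represented by the multivalued holomorphic function $G(\lambda)$; by the identity theorem this persists on all of $T$.

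Let $m$ be an arbitrary positive integer. Applying the morphism $\Dc_\lambda:\Qc(\omega)\to\O_T^{\an}$ of \eqref{DCQ} to $m\cdot\nu_{\tr}(\xi)$ and using Proposition~\ref{diffeqnG}, we obtain
\begin{equation}
\Dc_\lambda\bigl(m\cdot \nu_{\tr}(\xi)\bigr) = m\cdot \Dc_\lambda(G(\lambda)) = \frac{m(1-\zeta)}{\lambda-1},
\end{equation}
which is a nonzero element of $\O_T^{\an}(T)$. Since $\Dc_\lambda$ is well-defined on $\Qc(\omega)$ (it annihilates every local section of $\Pc(\omega)$ by Proposition~\ref{PFop}), the vanishing of $m\cdot\nu_{\tr}(\xi)$ in $\Gamma(T,\Qc(\omega))$ would force the left-hand side to be $0$. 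Hence $m\cdot\nu_{\tr}(\xi)\neq 0$ in $\Gamma(T,\Qc(\omega))$ for every $m\ge 1$.

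By Lemma~\ref{importantlemma}, for each $m$ the locus of $\lambda\in T$ with $\ev_\lambda(m\cdot\nu_{\tr}(\xi))\neq 0$ is the complement of a countable union of proper analytic subsets. Taking the intersection over all $m\ge 1$, we find a very general $\lambda\in T$ such that $m\cdot \Phi(\xi_\lambda)([\omega_\lambda])\neq 0$ in $\C/\Pc(\omega_\lambda)$ for every positive integer $m$; equivalently, $\Phi(\xi_\lambda)$ is non-torsion for such $\lambda$.
\end{proof}
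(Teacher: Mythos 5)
Your proposal is correct and follows essentially the same route as the paper: identify $\nu_{\tr}(\xi)$ with $G(\lambda)$ via Proposition \ref{2chainprop}, apply the descended operator $\Dc_\lambda$ of (\ref{DCQ}), and use Proposition \ref{diffeqnG} together with Lemma \ref{importantlemma} to conclude. The only difference is that you spell out explicitly the countable intersection over the multiples $m$, which the paper leaves implicit in its appeal to Lemma \ref{importantlemma} for the non-torsion conclusion.
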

\begin{proof}
By Proposition \ref{2chainprop}, we see that 
\begin{equation}\label{coincideeqn}
\ev_{\lambda}(\nu_\tr(\xi)) = \Phi(\xi_{\lambda})([\omega_\lambda]) \equiv G(\lambda) \mod \Pc(\omega_\lambda).
\end{equation}
holds for $0<\lambda <1$.
By Lemma \ref{importantlemma}, $\nu_{\tr}(\xi)$ is represented by the multivalued holomorphic function $G(\lambda)$.
Then we have $\Dc_\lambda(\nu_{\tr}(\xi)) = \Dc_\lambda(G(\lambda))$. 
Note that we regard $\Dc_\lambda$ on the left-hand side as the map $\Gamma(T,\Qc(\omega))\rightarrow \Gamma(T,\O_T^\an)$ induced by the morphism (\ref{DCQ}).
By Proposition \ref{diffeqnG}, $\Dc_\lambda(G(\lambda))\neq 0$, so $\nu_{\tr}(\xi)$ is non-torsion.
Then by Lemma \ref{importantlemma}, we have the result.
\end{proof}

Then we can prove the property $(\mathbf{N}_{18})$ 
\begin{proof}[Proof of $(\mathbf{N}_{18})$]
By Theorem \ref{tnonzero}, for very general $\lambda \in T$, $\Phi(\xi_{\lambda})$ is non-torsion.
Then $\nu_{E}(\xi_{\lambda})$ is non-torsion for such a $\lambda$ by Proposition \ref{reduce1}.
Since $\nu_{E}(\xi_{\lambda})\in \Xi_\lambda$, $\Xi_\lambda \subset J(\X_\lambda,\sigma)$ is also non-torsion.
Thus we have proved the property $(\mathbf{N}_{18})$.
\end{proof}

\bibliographystyle{plain}
\bibliography{reference}

\end{document}